\renewcommand{\geq}{\geqslant}
\renewcommand{\leq}{\leqslant}
\renewcommand{\l}{\langle}
\renewcommand{\r}{\rangle}
\renewcommand{\c}{c_{\textrm{\tiny ww}}}
\newcommand{\cc}{c_{\emph{\tiny ww}}}
\renewcommand{\k}{\kappa}
\renewcommand{\t}{t_{\emph{\Tiny LWP}}}
\renewcommand{\L}{\mathcal{L}}
\newcommand{\M}{\mathcal{M}}
\newcommand{\x}{\xi_0}
\newcommand{\e}{\mathbf{e}}
\newcommand{\f}{\mathbf{f}}
\newcommand{\g}{\mathbf{g}}
\newcommand{\p}{\boldsymbol{\phi}}
\newcommand{\q}{\boldsymbol{\psi}}
\renewcommand{\u}{\mathbf{u}}
\renewcommand{\v}{\mathbf{v}}
\newcommand{\hu}{\begin{pmatrix}\eta \\ u\end{pmatrix}}
\newcommand{\hv}{\begin{pmatrix}\zeta \\ v\end{pmatrix}}
\newtheorem{theorem}{Theorem}[section]
\newtheorem{lemma}[theorem]{Lemma}
\newtheorem{corollary}[theorem]{Corollary}
\newtheorem*{main-theorem}{Main Theorem}
\newtheorem*{remark*}{Remark}
\numberwithin{equation}{section}
\title[Instability in a shallow water model]{Modulational instability in \\ a full-dispersion shallow water model}
\author[Hur]{Vera~Mikyoung~Hur}
\address{Department of Mathematics, University of Illinois at Urbana-Champaign, Urbana, IL 61801 USA}
\email{verahur@math.uiuc.edu}
\author[Pandey]{Ashish~Kumar~Pandey}
\email{akpande2@illinois.edu}  
\date{\today}
\begin{document}

\maketitle

\begin{abstract}
We propose a shallow water model which combines the dispersion relation of water waves and the Boussinesq equations, and which extends the Whitham equation to permit bidirectional propagation. We establish that its sufficiently small, periodic wave train is spectrally unstable to long wavelength perturbations, provided that the wave number is greater than a critical value, like the Benjamin-Feir instability of a Stokes wave. We verify that the associated linear operator possesses infinitely many collisions of purely imaginary eigenvalues, but they do not contribute to instability away from the origin in the spectral plane to the leading order in the amplitude parameter. We discuss the effects of surface tension on the modulational instability. The results agree with those from formal asymptotic expansions and numerical computations for the physical problem. 
\end{abstract}

\section{Introduction}\label{sec:intro}

In the 1960s, Benjamin and Feir \cite{BF, BH} and Whitham \cite{Whitham1967} discovered that a Stokes wave\footnote{It is a matter of experience that waves typically seen in the ocean or a lake are approximately periodic and they propagate over a long distance practically at a a constant velocity without change of form. Stokes in his 1847 memoir (see also \cite{Stokes}) made many contributions about waves of the kind, for instance, observing that crests would be sharper and thoughts flatter as the amplitude increases, and a wave of greatest possible height would allow stagnation at the crest with a $120^\circ$ corner.} would be unstable to long wavelength perturbations --- namely, the {\em Benjamin-Feir} or {\em modulational instability} --- provided that 
\[
\kappa h>1.363\dots,
\]
where $\k$ denotes the carrier wave number, and $h$ is the undisturbed water depth. Corroborating results arrived about the same time, but independently, by Lighthill \cite{Lighthill} and Zakharov \cite{Zakharov-ww}, among others. We encourage the interested reader to \cite{ZO} for more about the early history. In the 1990s, Bridges and Mielke \cite{BM1995} addressed the corresponding spectral instability in a rigorous manner. By the way, it is difficult to justify the 1960s theory in a functional analytic setting. But the proof leaves some important issues  open, such as the stability and instability away from the origin in the spectral plane. The governing equations of the water wave problem are complicated, and they as a rule prevent a detailed account. One may resort to approximate models to gain insights.

\subsection*{Whitham's shallow water equation}

As Whitham \cite{Whitham} emphasized, ``the breaking phenomenon is one of the most intriguing long-standing problems of water wave theory." The {\em nonlinear shallow water equations},
\begin{equation}\label{E:shallow}
\begin{aligned}
&\partial_t\eta+\partial_x(u(h+\epsilon\,\eta))=0,\\
&\partial_tu+g\partial_x\eta+\epsilon\,u\partial_xu=0,
\end{aligned}
\end{equation}
approximate the physical problem when the characteristic wavelength is of a larger order than the undisturbed fluid depth, and they explain wave breaking. That is, the solution remains bounded, whereas its slope becomes unbounded in finite time. Here $t\in\mathbb{R}$ is proportional to elapsed time, and $x\in\mathbb{R}$ is the spatial variable in the primary direction of wave propagation; $\eta=\eta(x,t)$ represents the surface displacement from the depth $=h$, and $u=u(x,t)$ is the fluid particle velocity at the rigid flat bottom; $g$ denotes the constant due to gravitational acceleration, and $\epsilon$ is the dimensionless nonlinearity parameter; see \cite{Lannes}, for instance, for details. Throughout, $\partial$ means partial differentiation. Note that the phase speed for the linear part of \eqref{E:shallow} is $\sqrt{gh}$ for any wave number, whereas the speed of a $2\pi/\k$ periodic wave near the rest state of water (see \cite{Whitham}, for instance) is
\begin{equation}\label{def:c1}
\c(\k)=\sqrt{\frac{g\tanh(\k h)}{\k}}.
\end{equation}

But the shallow water theory goes too far. It predicts that all solutions carrying an increase of elevation break. Yet it is a matter of experience that some waves in water do not. Perhaps, the neglected dispersion effects inhibit wave breaking. 

But, including some\footnote{In the long wave limit as $\kappa h\to0$, one may expand the right side of \eqref{def:c1} to find 
\[
\c(\k)=\sqrt{gh}\Big(1-\frac16\k^2h^2\Big)+O(\k^4h^4).
\]}
dispersion effects, the Korteweg-de Vries (KdV) equation
\[
\partial_t\eta+\sqrt{gh}\Big(1+\frac16 h^2\partial_x^2\Big)\partial_x\eta
+\frac32\sqrt{\frac{g}{h}}\epsilon\,\eta\partial_x\eta=0
\]
goes too far and predicts that no solutions break. To recapitulate, one necessitates some dispersion effects to satisfactorily explain wave breaking, but the dispersion of the KdV equation seems too strong\footnote{This is not surprising because the phase speed $=\sqrt{gh}(1-\frac16\kappa^2h^2)$ for the KdV equation poorly approximates that for water waves when $\kappa h\gg1$.}.

Whitham noted that ``it is intriguing to know what kind of simpler mathematical equations (than the governing equations of the water wave problem) could include" the breaking effects, and he put forward (see \cite{Whitham}, for instance) 
\begin{equation}\label{E:Whitham}
\partial_t\eta+\c(|\partial_x|)\partial_x\eta
+\frac32\sqrt{\frac{g}{h}}\epsilon\,\eta\partial_x\eta=0, 
\end{equation}
where $\c(|\partial_x|)$ is a Fourier multiplier operator, defined as 
\[
\widehat{\c(|\partial_x|)v}(\k)=\c(|\k|)\widehat{v}(\k),
\]
and $\c$ is in \eqref{def:c1}. Here and elsewhere, the circumflex means the Fourier transform. It combines the dispersion relation of the unidirectional propagation of water waves and a nonlinearity of the shallow water equations. In a small amplitude and long wavelength regime, such that $\epsilon=\kappa^2h^2\ll1$, the {\em Whitham equation} agrees with the KdV equation up to the order of $\epsilon$ over a relevant time interval; see \cite{Lannes}, for instance, for details. But it may offer an improvement over the KdV equation for short waves. Whitham conjectured the wave breaking for \eqref{E:Whitham} and \eqref{def:c1}. One of the authors \cite{Hur-breaking} settled it.

\subsection*{The full-dispersion shallow water equations}

In recent years, the Whitham equation gathered renewed attention because of its ability to explain high frequency phenomena of water waves. In particular, one of the authors \cite{HJ2} demonstrated that a sufficiently small, $2\pi/\k$-periodic wave train of \eqref{E:Whitham} and \eqref{def:c1} be spectrally unstable to long wavelength perturbations, provided that $\k h>1.145\dots$, like the Benjamin-Feir instability of a Stokes wave, whereas it be stable to square integrable perturbations otherwise. 

But the Whitham equation does not include collisions of eigenvalues away from the origin in the spectral plane, which numerical computations \cite{McLean1981, McLean1982, MS1986}, for instance, suggest to develop instability in the water wave problem. 
This motivates us to propose the {\em full-dispersion shallow water equations}, 
\begin{equation}\label{E:BW}
\begin{aligned}
&\partial_t\eta+\partial_x(u(1+\epsilon\eta))= 0,\\
&\partial_tu+\c^2(|\partial_x|)\partial_x\eta+\epsilon\,u\partial_xu=0,
\end{aligned}
\end{equation}
where $\c$ is in \eqref{def:c1}. They combine the dispersion relation of water waves and the nonlinear shallow water equations, and they extend the Whitham equation to permit bidirectional propagation. Moreover, \eqref{E:BW} and \eqref{def:c1} exhibit the spectral behavior of the physical problem (see \cite{Whitham}, for instance, for details). 

When $\epsilon=\kappa^2h^2\ll 1$, the full-dispersion shallow water equations agree with a variant\footnote{They do not appear explicitly in the work of Boussinesq. But (280) in \cite{Bnesq1877}, for instance, after several ``higher order terms" drop out,  becomes equivalent to \eqref{E:Bnesq}.} of the Boussinesq equations,
\begin{equation}\label{E:Bnesq}
\begin{aligned}
&\partial_t\eta+\partial_x(u(h+\epsilon\,\eta))=0,\\
&\Big(1-\frac13h^2\partial_x^2\Big)\partial_tu+g\partial_x\eta+\epsilon\,u\partial_xu=0,
\end{aligned}
\end{equation}
up to the order of $\epsilon$. Hence they as well go by the name of the {\em Boussinesq-Whitham equations}. Indeed, one may modify the argument in \cite[Section~7.4.5]{Lannes} to verify that the solutions of \eqref{E:BW}  and \eqref{E:Bnesq} exist, where $\c$ is in \eqref{def:c1}, and they converge to the solutions of the water wave problem up to terms of order $\epsilon$ over a relevant time interval. The global-in-time well-posedness for \eqref{E:Bnesq} was established in \cite{Schonbek} and \cite{Amick}, for instance, whereas the wave breaking for \eqref{E:BW} and \eqref{def:c1} was in \cite{HT2}, under an assumption that $\eta$ is considerably smaller than $h$. 

\subsection*{Modulational instability}

In the past decades, much research effort aimed at translating Whitham's formal modulation theory (see \cite{Whitham}, for instance) into analytical stability results. It would be impossible to do justice to all advances in the vein. We encourage the interested reader to \cite{BHJ} and references therein. But the arguments as a rule make strong use of Evans function techniques and other ODE methods. Hence they are not directly applicable to \eqref{E:Whitham} or \eqref{E:BW}, which involve a nonlocal\footnote{
Indeed, $\c(\k)$ and, hence, $\c^2(\k)$ are not polynomials of $i\k$.} operator. The authors and collaborators \cite{BHV, HJ2, HJ3, HP1} (see also \cite{BHJ}) instead worked out the corresponding long wavelength perturbation in a rigorous manner, whereby they successfully determined modulational stability and instability, for a broad class of nonlinear dispersive equations permitting nonlocal operators. In Section~\ref{sec:existence} and Section~\ref{sec:MI}, we take matters further for the full-dispersion shallow water equations. Corollary~\ref{cor:kc} states that a sufficiently small, $2\pi/\k$-periodic wave train of \eqref{E:BW} and \eqref{def:c1} is modulationally unstable, provided that 
\[
\k h>1.610\dots,
\] 
and it is stable to square integrable perturbations in the vicinity of the origin in the spectral plane otherwise. Note that the critical wave number compares reasonably well with what is in \cite{BH, Whitham1967} and \cite{BM1995}. 

The proof follows along the same line as those in \cite{HJ2, HP1}, making a lengthy and complicated, but explicit, spectral perturbation for the associated linearized operator. For the zero Floquet exponent, we distinguish four eigenvalues at the origin and calculate the small amplitude expansion of the associated eigenfunctions. It seems impossible to find the eigenfunctions explicitly without recourse to the small amplitude theory. On the other hand, \eqref{E:BW} and \eqref{def:c1} lose relevances for large amplitude waves. For small values of the Floquet exponent, we then determine four eigenvalues near the origin in the spectral plane, up to the quadratic order in the Floquet exponent and linear order in the amplitude parameter, whereby we derive a modulational instability index as a function of the carrier wave number.

We do not expect to uniquely determine higher order terms in the eigenfunction expansion. To compare, one may find the Whitham eigenfunctions to any order. Fortuitously, we detect modulational instability at the linear order in the amplitude parameter. We are able to calculate the quadratic order terms in the eigenfunction expansion. But they are bulky, so that the index formulae become unwieldy.

\subsection*{Comparison to other Boussinesq-Whitham models}

Perhaps, the best known among Boussinesq's equations of the shallow water theory is
\begin{equation}\label{E:Bnesq1}
\partial_t^2\eta=gh\Big(1+\frac13h^2\partial_x^2\Big)\partial_x^2\eta+\frac32\epsilon\partial_x^2(\eta^2).
\end{equation}
Including the full dispersion in water waves, one may follow Whitham's heuristics and replace the square of the phase speed~$=gh(1-\frac13\k^2h^2)$ by $\c^2(\k)$ in \eqref{def:c1}. The result becomes
\begin{equation}\label{E:BW1}
\partial_t^2\eta=\c^2(|\partial_x|)\partial_x^2\eta+\frac32\epsilon\partial_x^2(\eta^2).
\end{equation}
It is one of many which stake the claim to be the Boussinesq-Whitham equation. Unfortunately, \eqref{E:BW1} is not suitable to describing wave packet propagation. Indeed, the Cauchy problem for \eqref{E:BW1} is ill-posed in the periodic setting, implying that a negative constant solution is spectrally unstable however small it is; see \cite{DT} or Appendix~\ref{sec:BW1} for details.

Under the assumption $\partial_t\eta+\partial_x\eta=O(\epsilon)$, which by the way enforces unidirectional propagation, \eqref{E:Bnesq1} is formally equivalent to
\[
\partial_t^2\eta=gh\partial_x^2\eta+\frac13h^2\partial_x^2\partial_t^2\eta+\frac32\epsilon\partial_x^2(\eta^2)
\]
up to the order of $\epsilon$. Including the full dispersion in water waves, likewise, we arrive at
\begin{equation}\label{E:BW2}
\c^{-2}(|\partial_x|)\partial_t^2\eta=\partial_x^2\eta+\frac32\epsilon\partial_x^2(\eta^2).
\end{equation} 
The Cauchy problem for \eqref{E:BW2} is well-posed at least for short times. But it fails to predict modulational instability. Indeed, a sufficiently small, periodic wave train of \eqref{E:BW2} is stable to square integrable perturbations in the vicinity of the origin in the spectral plane for any wave number; see \cite{HP1} for details. 
In contrast, in Section~\ref{sec:MI} we establish the modulational instability for \eqref{E:BW}. 

Moreover, proposed in \cite{MKD} are
\begin{equation}\label{E:BW3}
\begin{aligned}
&\partial_t\eta+\c^2(|\partial_x|)\partial_xu+\epsilon\partial_x(u\eta)= 0,\\
&\partial_tu+\partial_x\eta+\epsilon\,u\partial_xu=0,
\end{aligned}
\end{equation}
as a Boussinesq-Whitham model. Note that \eqref{E:BW3} is formally equivalent to \eqref{E:BW} up to the order of $\epsilon$. But they are not suitable to explaining wave breaking. Indeed, to the best of the authors' knowledge, the well-posedness for \eqref{E:BW3} is not understood. In contrast, in Appendix~\ref{sec:LWP}, we establish the well-posedness for \eqref{E:BW} for short times.

To recapitulate, \eqref{E:BW} is preferred over other Boussinesq-Whitham models for the purpose of studying the breaking and stability of water waves.  

\subsection*{Stability and instability away from the origin}

The spectrum of the linear operator associated with \eqref{E:BW} and \eqref{def:c1} aligns with that for the water wave problem (see \cite{Whitham}, for instance, for details). In particular, it contains infinitely many collisions of purely imaginary eigenvalues. To compare, no Whitham eigenvalues collide other than at the origin. In the 1980s, McLean and collaborators \cite{McLean1981, McLean1982} (see also \cite{MS1986}) numerically approximated the spectrum for the physical problem, but in the infinite depth, whereby they argued that all colliding eigenvalues for the zero amplitude parameter would contribute to instability as the amplitude increases. Numerical computations in \cite{DO} and \cite{AN2014}, for instance, in the finite depth bear this out.

In Section~\ref{sec:HF}, we make an explicit spectral perturbation to demonstrate that all nonzero eigenvalues of the linear operator for \eqref{E:BW} and \eqref{def:c1} remain on the imaginary axis to the linear order in the small amplitude parameter. Consequently, the modulational instability dominates the spectral instability away from the origin. The result agrees with the numerical findings in \cite{MS1986, DO, AN2014}, for instance, for the physical problem. Indeed, some unstable eigenvalue away from the origin for the water wave problem grows like a quartic in the amplitude parameter; see \cite{MS1986}, for instance, for details. 

It is interesting to analytically calculate higher order terms in the eigenvalue expansion for \eqref{E:BW} and \eqref{def:c1}, and their contribution to stability; see Appendix~\ref{sec:2,0} for some details. It is interesting to numerically study the stability and instability away from the origin in the spectral plane, and the growth rates of unstable eigenvalues. 

\subsection*{Effects of surface tension}

In the presence of the effects of {\em surface tension} (see \cite{Whitham}, for instance),
\[
\sqrt{(g+T\k^2)\frac{\tanh(\k h)}{\k}}
\]
replaces \eqref{def:c1}, where $T$ is the coefficient of surface tension. 
In Section~\ref{sec:ST}, we adapt the argument in Section~\ref{sec:existence} and Section~\ref{sec:MI} to demonstrate that the capillary effects considerably alter the modulational instability in \eqref{E:BW} and \eqref{def:c1}. Specifically, in the $\k h$ and $\k\sqrt{T/g}$ plane, we determine the regions of stability and instability, whose boundaries are associated with an extremum of the group velocity, the resonance of short and long waves, the resonance of the fundamental mode and the second harmonic, and the resonance of the dispersion and nonlinear effects; see Figure~\ref{fig:ST} for details. The result agrees with those in \cite{Kawahara} and \cite{DR}, for instance, from formal asymptotic expansions for the physical problem. To compare, the Whitham equation fails to predict the limit of ``large surface tension;" see \cite{HJ3}, for instance, for details. Therefore, \eqref{E:BW} offers an improvement over the Whitham equation for gravity capillary waves.

\subsection*{Notation} 

For $p$ in the range $[1,\infty]$, let $L^p(\mathbb{R})$ consist of real or complex valued, Lebesgue measurable functions over $\mathbb{R}$ such that
\[
\|v\|_{L^p(\mathbb{R})}:=\Big(\frac{1}{\sqrt{2\pi}}\int_{-\infty}^\infty|v(x)|^p~dx\Big)^{1/p}<\infty
\qquad\text{if\quad$p<\infty$},
\]
and $\|v\|_{L^\infty(\mathbb{R})}:=\text{ess\,sup}_{x\in\mathbb{R}}|v(x)|<\infty$ if $p=\infty$. 

For $v\in L^1(\mathbb{R})$, the Fourier transform of $v$ is written $\widehat{v}$ and defined by
\[
\widehat{v}(\k)=\frac{1}{\sqrt{2\pi}}\int_{-\infty}^\infty v(x)e^{-i\k x}~dx.
\]
If $v\in L^2(\mathbb{R})$ then the Parseval theorem asserts that $\|\widehat{v}\|_{L^2(\mathbb{R})}=\|v\|_{L^2(\mathbb{R})}$. For $s\in\mathbb{R}$, let $H^s(\mathbb{R})$ consist of tempered distributions such that 
\[
\|v\|_{H^s(\mathbb{R})}:=\Big(\int_{-\infty}^\infty(1+|\k|^2)^s|\widehat{v}(\k)|^2~d\k\Big)^{1/2}<\infty.
\]
Let $H^\infty(\mathbb{R})=\bigcap_{s\in\mathbb{R}} H^s(\mathbb{R})$. 

Let $\mathbb{T}$ denote the unit circle in $\mathbb{C}$. We identify functions over $\mathbb{T}$ with $2\pi$ periodic functions over $\mathbb{R}$ via $v(e^{iz})=V(z)$ and, for simplicity of notation, we write $v(z)$ rather than $v(e^{iz})$. For $p\in[1,\infty]$, let $L^p(\mathbb{T})$ consist of real or complex valued, Lebesgue measurable, and $2\pi$ periodic functions over $\mathbb{R}$ such that 
\[
\|v\|_{L^p(\mathbb{T})}:=\Big(\frac{1}{2\pi}\int^\pi_{-\pi}|v(z)|^p~dz\Big)^{1/p}<\infty
\qquad\text{if\quad$p<\infty$},
\]
and $\|v\|_{L^\infty(\mathbb{T})}:=\text{ess\,sup}_{-\pi<z\leq\pi}|v(z)|<\infty$ if $p=\infty$. Let $H^1(\mathbb{T})$ consist of $L^2(\mathbb{T})$ functions whose derivatives are in $L^2(\mathbb{T})$. Let $H^\infty(\mathbb{T})=\bigcap_{k=0}^\infty H^k(\mathbb{T})$. 

For $v \in L^1(\mathbb{T})$, the Fourier series of $v$ is defined by
\[
\sum_{n\in \mathbb{Z}}\widehat{v}(n)e^{inz}, \qquad\text{where}\quad
\widehat{v}(n)=\frac{1}{2\pi}\int^\pi_{-\pi}v(z)e^{-inz}~dz.
\]
If $v\in L^2(\mathbb{T})$ then its Fourier series converges to $v$ pointwise almost everywhere. We define the $L^2(\mathbb{T})$ inner product as 
\[
\langle v_1,v_2\rangle_{L^2(\mathbb{T})}=\frac{1}{2\pi}\int^\pi_{-\pi} v_1(z)v_2^*(z)~dz
=\sum_{n\in\mathbb{Z}}\widehat{v_1}(n)\widehat{v_2}^*(n).
\]
Here and elsewhere, the asterisk means complex conjugation. The latter equality follows from the Parseval theorem.

We extend the above to product spaces in the usual manner. In particular, we define the Fourier series as $\hv \sim {\displaystyle \sum_{n\in\mathbb{Z}}\begin{pmatrix} \widehat{\zeta}(n) \\ \widehat{v}(n)\end{pmatrix}e^{inz}}$ and the $L^2(\mathbb{T})\times L^2(\mathbb{T})$ inner product as 
\begin{align*}
\Big\langle \begin{pmatrix}\zeta_1\\v_1\end{pmatrix},
\begin{pmatrix}\zeta_2\\v_2\end{pmatrix} \Big\rangle_{L^2(\mathbb{T})\times L^2(\mathbb{T})}
= &\langle \zeta_1,\zeta_2\rangle_{L^2(\mathbb{T})}+\langle v_1,v_2\rangle_{L^2(\mathbb{T})}\\
=&\frac{1}{2\pi}\int^\pi_{-\pi} (\zeta_1(z)\zeta_2^*(z)+v_1(z)v_2^*(z))~dz. \notag
\end{align*}

\section{Sufficiently small, periodic wave trains}\label{sec:existence}

We determine periodic wave trains of the full-dispersion shallow water equations, after normalization of parameters,
\begin{equation}\label{E:main}
\begin{aligned}
&\partial_t\eta+\partial_xu+\partial_x(u\eta)=0,\\
&\partial_tu+\c^2(|\partial_x|)\partial_x\eta+u\partial_xu=0.
\end{aligned}
\end{equation}
Here and in the sequel, (by abuse of notation)
\begin{equation}\label{def:c}
\c(\k)=\sqrt{\frac{\tanh\k}{\k}}.
\end{equation}
Indeed, $\epsilon=1$ turns \eqref{E:BW} to \eqref{E:main}; $\k h\mapsto \k$ and $gh=1$ turn \eqref{def:c1} to \eqref{def:c}. We then calculate their small amplitude expansion. 

\subsection{Properties of $\c$}\label{ss:c}

Note that $\c^2$ is even and real analytic, $\c^2(0)=1$, and it decreases to zero monotonically away from the origin. Indeed, 
\[
\tanh\k=\sum_{n=1}^\infty\frac{2^{2n}(2^{2n}-1)}{(2n)!}B_{2n}\k^{2n-1}
=\k-\frac13\k^3+\frac{2}{15}\k^5-\cdots\qquad\text{for $|\k|\ll1$},
\]
where $B_{2n}$ is the Bernoulli number. Since
\[
\frac{1}{1+|\k|}\leq \c^2(\k)\leq \frac{2}{1+|\k|}\qquad\text{pointwise in $\mathbb{R}$}
\]
by brutal force, $\c^2(|\partial_x|)$ may be regarded equivalent to $(1+|\partial_x|)^{-1}$ in the $L^2$-Sobolev space setting. In particular, $\c^2(|\partial_x|):H^s(\mathbb{R})\to H^{s+1}(\mathbb{R})$ for any $s\in\mathbb{R}$.  

\subsection{Periodic traveling waves}\label{ss:periodic}

By a traveling wave of \eqref{E:main}-\eqref{def:c}, we mean a solution which propagates at a constant velocity without change of form. That is, $\eta$ and $u$ are functions of $x-ct$ for some $c>0$, the wave speed. Under the assumption, we will go to a moving coordinate frame, changing $x-ct$ to $x$, whereby $t$ will disappear. The result becomes, by quadrature, 
\begin{align*}
&-c\eta+u+u\eta=(1-c^2)b_1,\\
&-cu+\c^2(|\partial_x|)\eta+\frac12u^2=(1-c^2)b_2
\end{align*}
for some $b_1$, $b_2\in\mathbb{R}$; $1-c^2$ is for convenience. We seek a {\em periodic traveling wave} of \eqref{E:main}-\eqref{def:c}. That is, $\eta$ and $u$ are $2\pi$ periodic functions of 
\[
z:=\k x\qquad\text{for some $\k>0$, \quad the wave number,} 
\]
and they solve
\begin{equation}\label{E:periodic}
\begin{aligned}
&-c\eta+u+u\eta=(1-c^2)b_1,\\
&-cu+\c^2(\k|\partial_z|)\eta+\frac12u^2=(1-c^2)b_2.
\end{aligned}
\end{equation}
Note that 
\begin{equation}\label{E:c-bound}
\c^2(\k|\partial_z|):H^k(\mathbb{T})\to H^{k+1}(\mathbb{T})\qquad
\text{for any $\k>0$}\quad\text{for any integer $k\geq 0$}.
\end{equation}
Note that 
\begin{equation}\label{def:ck}
\c^2(\k|\partial_z|)e^{inz}=\c^2(n\k)e^{inz}\qquad\text{for $n\in\mathbb{Z}$},
\end{equation}
or, equivalently, $\c^2(\k|\partial_z|)(1)=1$, 
\begin{equation*}\label{def:ck'}
\c^2(\k|\partial_z|)(\cos nz)=\c^2(n\k)\cos nz\quad\text{and}\quad
\c^2(\k|\partial_z|)(\sin nz)=\c^2(n\k)\sin nz. 
\end{equation*}

Note that \eqref{E:periodic} remains invariant under 
\begin{equation}\label{E:invariance}
z\mapsto z+z_0\quad\text{and}\quad z\mapsto -z
\end{equation}
for any $z_0\in \mathbb{R}$. Hence, in particular, we may assume that $\eta$ and $u$ are even. But \eqref{E:periodic} does not possess scaling invariance. Hence we may not a priori assume that $\k=1$. Rather, the (in)stability results reported herein depend on the carrier wave number; see Theorem~\ref{thm:index} and Corollary~\ref{cor:kc}, for instance, for details. Moreover, \eqref{E:periodic} does not possess Galilean invariance. Hence we may not a priori assume that $b_1$ or $b_2=0$. Rather, we exploit variations of \eqref{E:periodic} in the $b_1$ and $b_2$ variables in the course of the stability proof; see Lemma~\ref{lem:L} for details. To compare, the Whitham equation (see \eqref{E:Whitham}) for periodic traveling waves, after normalization of parameters,
\[
-c\eta+\c(\k|\partial_z|)\eta+\eta^2=(1-c)^2b\qquad \text{for some $b\in \mathbb{R}$}
\]
remains invariant under
\[
\eta\mapsto\eta+\eta_0,\quad c\mapsto c-2\eta_0,\quad\text{and}\quad 
(1-c)^2b\mapsto (1-c)^2b+(1-c)\eta_0+\eta_0^2
\]
for any $\eta_0\in\mathbb{R}$; see \cite{HJ2}, for instance.

We state an existence result for periodic traveling waves of \eqref{E:main}-\eqref{def:c} and their small amplitude expansion.

\begin{theorem}[Existence of sufficiently small, periodic wave trains]\label{thm:existence} 
For any $\k>0$, $b_1$, $b_2 \in \mathbb{R}$ and $|b_1|$, $|b_2|$ sufficiently small, a one parameter family of solutions of \eqref{E:periodic} exists, denoted $\eta(a; \k, b_1, b_2)(z)$, $u(a; \k, b_1, b_2)(z)$, and $c(a; \k, b_1, b_2)$, for $a \in \mathbb{R}$ and $|a|$ sufficiently small; $\eta$ and $u$ are $2\pi$ periodic, even, and smooth in $z$, and $c$ is even in $a$; $\eta$, $u$, and $c$ depend analytically on $a$, and $\k$, $b_1$, $b_2$. Moreover,
\begin{subequations}\label{E:hu-small}
\begin{align}
\eta(a;\k,b_1,b_2)(z)=&\eta_0(\k,b_1,b_2)+a\cos z +a(b_1\cc(\k)+b_2)\cos z \label{E:h-small}\\
&+a^2(h_0+h_2\cos2z)+O(a(a+b_1+b_2)^2), \notag \\
u(a;\k,b_1,b_2)(z)=&u_0(\k,b_1,b_2)+a\cc(\k)\cos z
+\frac12a\cc(\k)(b_1\cc(\k)+b_2)\cos z\hspace*{-25pt} \label{E:u-small}\\
&+a^2\cc(\k)\Big(h_0-\frac12+\Big(h_2-\frac12\Big)\cos2z\Big)+O(a(a+b_1+b_2)^2),\hspace*{-25pt}\notag
\intertext{and}
c(a;\k,b_1,b_2)=&c_0(\k,b_1,b_2)
+\frac34a^2\cc(\k)(2h_0+h_2-1)+O(a(a+b_1+b_2)^2)\hspace*{-25pt}\label{E:c-small}
\end{align}
\end{subequations}
as $a$, $b_1$, $b_2 \to 0$; 
\begin{subequations}\label{E:hu0}
\begin{align}
\eta_0(\k,b_1,b_2)&=b_1\cc(\k)+b_2+O((b_1+b_2)^2),\label{E:h0}\\
u_0(\k,b_1,b_2)&=b_1+b_2\cc(\k)+O((b_1+b_2)^2),\label{E:u0}
\intertext{and}
c_0(\k,b_1,b_2)&=\cc(\k)+b_1\Big(\frac12\cc^2(\k)+1\Big)+\frac32b_2\cc(\k)+O((b_1+b_2)^2)\label{E:c0}
\end{align}
\end{subequations}
as $b_1$, $b_2\to 0$, where
\begin{equation}\label{def:h02}
h_0=\frac34\frac{\cc^2(\k)}{\cc^2(\k)-1}\quad\text{and}\quad 
h_2=\frac34\frac{\cc^2(\k)}{\cc^2(\k)-\cc^2(2\k)}.
\end{equation}
\end{theorem}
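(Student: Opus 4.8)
The plan is to construct the family by a Lyapunov--Schmidt reduction from the trivial solutions, treating $a$ (the amplitude) as the bifurcation parameter and $\k$, $b_1$, $b_2$ as additional parameters on which everything depends analytically. First I would identify the constant solutions of \eqref{E:periodic}: for each $\k$ and small $b_1$, $b_2$ there is a constant state $(\eta_0,u_0)$ with speed $c_0$ obtained by solving the two algebraic equations $-c\eta_0+u_0+u_0\eta_0=(1-c^2)b_1$ and $-cu_0+\eta_0+\frac12u_0^2=(1-c^2)b_2$ (using $\c^2(\k|\partial_z|)$ acting on a constant equals the identity); the implicit function theorem gives $\eta_0,u_0=O(b_1+b_2)$ and $c_0=\cc(\k)+O(b_1+b_2)$, and expanding to second order yields \eqref{E:hu0}. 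I would then linearize \eqref{E:periodic} about this constant state: writing $\eta=\eta_0+\tilde\eta$, $u=u_0+\tilde u$, $c=c_0+\tilde c$, the linear part is a Fourier multiplier matrix in $z$, and one checks that its kernel on the space of even $2\pi$-periodic functions is exactly spanned by $(\cos z,\ \cc(\k)\cos z)$ — this is where the normalization $\cc(\k)=\sqrt{\tanh\k/\k}$ enters, since the $n=\pm1$ mode is the resonant one at $c=c_0$. The key nondegeneracy fact to verify is that $n=0$ and $|n|\geq2$ modes give invertible blocks (the $n=0$ block invertibility is the statement that the constant-state map above is a local diffeomorphism; for $|n|\ge2$ one needs $\cc^2(n\k)\ne\cc^2(\k)$, which holds because $\c^2$ is strictly decreasing on $[0,\infty)$ by Section~\ref{ss:c}).

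Next I would set up the Lyapunov--Schmidt decomposition: project \eqref{E:periodic} onto the kernel and its complement in, say, $H^\infty(\mathbb{T})\times H^\infty(\mathbb{T})$ restricted to even functions. The complementary (range) equation is solved by the implicit function theorem for the non-resonant part of $(\tilde\eta,\tilde u)$ as an analytic function of the kernel amplitude $a$, of $\tilde c$, and of $\k,b_1,b_2$, vanishing to second order in $(a,b_1,b_2)$; the smoothing property \eqref{E:c-bound}/\eqref{E:c-bound} shows the nonlinearity maps into the right spaces and the linear operator is Fredholm of index zero. The one remaining scalar bifurcation equation (the projection onto $\cos z$) is then solved for $\tilde c$ as an analytic function of $a$ (and the parameters); because the quadratic nonlinearities $u\eta$ and $\frac12 u^2$ have no $\cos z$ Fourier component when fed the leading $a\cos z$ profile (products of $\cos z$ with itself live in modes $0$ and $2$), the $O(a)$ term in $\tilde c$ vanishes, $c$ is even in $a$, and the leading correction is $O(a^2)$. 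I would fix the amplitude normalization by declaring $a$ to be the coefficient of $\cos z$ in $\eta$ at $b_1=b_2=0$ (or more precisely by a projection condition), which is what pins down the family uniquely.

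Finally I would extract the explicit coefficients by substituting the ansatz $\eta=\eta_0+a(1+\alpha)\cos z+a^2(h_0+h_2\cos 2z)+\cdots$, $u=u_0+a\cc(\k)(1+\beta)\cos z+a^2(\cdots)+\cdots$ into \eqref{E:periodic} and matching powers of $a$ and of $b_1,b_2$. At order $a^2$ and zeroth order in $b_1,b_2$: the mode-$0$ and mode-$2$ components of the second equation, using $\c^2(\k|\partial_z|)(h_0+h_2\cos 2z)=h_0+\cc^2(2\k)h_2\cos 2z$, together with $\cos^2 z=\frac12+\frac12\cos 2z$, give linear systems whose solution is \eqref{def:h02}; the mode-$1$ component at order $a^2$ (after using the first equation to eliminate the $u$-correction) produces the solvability relation fixing the $O(a^2)$ part of $c$, namely \eqref{E:c-small}. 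The mixed $O(a(b_1+b_2))$ terms $(b_1\cc(\k)+b_2)\cos z$ in \eqref{E:h-small} and $\tfrac12\cc(\k)(b_1\cc(\k)+b_2)\cos z$ in \eqref{E:u-small} come from linearizing the mode-$1$ equations about the shifted constant state and tracking how $\eta_0,u_0,c_0$ depend on $b_1,b_2$ via \eqref{E:hu0}. The main obstacle is purely bookkeeping: organizing the double expansion in $a$ and $(b_1,b_2)$ so that the error terms are genuinely $O(a(a+b_1+b_2)^2)$ and the stated coefficients come out with the right rational/algebraic dependence on $\cc(\k)$ and $\cc(2\k)$; there is no conceptual difficulty provided the non-resonance $\cc^2(\k)\ne 1$ and $\cc^2(\k)\ne\cc^2(2\k)$ holds, which it does for every $\k>0$, and which is exactly why $h_0$ and $h_2$ in \eqref{def:h02} are well defined.
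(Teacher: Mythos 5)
Your proposal follows essentially the same route as the paper: a Lyapunov--Schmidt reduction about the constant state at the resonant $n=\pm1$ mode, with the range equation solved by the implicit function theorem (using the smoothing of $\c^2(\k|\partial_z|)$ and the non-resonance $\c^2(n\k)\neq\c^2(\k)$ for $n\neq\pm1$), followed by the same order-by-order coefficient matching; the only cosmetic difference is that you restrict to even functions to obtain a one-dimensional kernel, whereas the paper keeps the two-dimensional kernel and eliminates one of the two bifurcation equations via the translation invariance. One small point: the vanishing of the $O(a)$ coefficient in $c$ does not by itself give evenness of $c$ in $a$ to all orders --- for that you should invoke, as the paper does, the invariance of \eqref{E:periodic} under $z\mapsto z+\pi$ together with uniqueness of the reduced solution, which sends $a\mapsto-a$ while leaving $c$ unchanged.
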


\subsection{Regularity}\label{ss:regularity}

As a preliminary, we establish the smoothness of solutions of \eqref{E:periodic}. 

\begin{lemma}[Regularity]\label{lem:regularity}
If $\eta$, $u \in H^1(\mathbb{T})$ solve \eqref{E:periodic} for some $c>0$, and $\k>0$, $b_1$, $b_2\in\mathbb{R}$ 
and if $c-\|u\|_{L^\infty(\mathbb{T})}\geq\epsilon>0$ for some $\epsilon$ then $\eta$, $u\in H^\infty(\mathbb{T})$.
\end{lemma}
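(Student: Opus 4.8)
The plan is a regularity bootstrap built on three structural features of \eqref{E:periodic}: the first equation is purely algebraic in $u$ and $\eta$ and can be solved for $\eta$ once $c-u$ is known to be bounded away from zero; the second equation carries the smoothing multiplier $\c^2(\k|\partial_z|)$, which gains one derivative by \eqref{E:c-bound}; and the $u$-nonlinearity is the polynomial $cu-\tfrac12u^2$, whose $z$-derivative factors as $(c-u)\partial_zu$. The hypothesis $c-\|u\|_{L^\infty(\mathbb{T})}\geq\epsilon>0$ is what ties these together: since $u\in H^1(\mathbb{T})\hookrightarrow C(\mathbb{T})$, it gives $\epsilon\leq c-u(z)\leq c+\|u\|_{L^\infty(\mathbb{T})}$ for every $z$, so $c-u$ is bounded away from $0$ and $\infty$.

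First I would rewrite the first equation of \eqref{E:periodic} as $\eta\,(c-u)=u-(1-c^2)b_1$ and divide by $c-u$ to obtain $\eta=\big(u-(1-c^2)b_1\big)/(c-u)$. Because $H^k(\mathbb{T})$ is a Banach algebra for every integer $k\geq1$ and is closed under composition with functions analytic on a neighborhood of the range of the argument --- here $s\mapsto 1/(c-s)$, which is analytic on $\{s<c\}$, and $u$ takes values in the compact set $[\min u,\,c-\epsilon]\subset\{s<c\}$ --- it follows that $1/(c-u)\in H^k(\mathbb{T})$, and hence $\eta\in H^k(\mathbb{T})$ whenever $u\in H^k(\mathbb{T})$. (These facts about $H^k(\mathbb{T})$ may be cited or obtained by a short induction on $k$.)

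Then I would run the induction on $k\geq1$, the base case being the hypothesis $u\in H^1(\mathbb{T})$. Assuming $u\in H^k(\mathbb{T})$: by the previous step $\eta\in H^k(\mathbb{T})$, so $\c^2(\k|\partial_z|)\eta\in H^{k+1}(\mathbb{T})$ by \eqref{E:c-bound}, and the second equation of \eqref{E:periodic} gives $cu-\tfrac12u^2=(1-c^2)b_2+\c^2(\k|\partial_z|)\eta\in H^{k+1}(\mathbb{T})$. Differentiating in $z$ yields $(c-u)\,\partial_zu=\partial_z\big(cu-\tfrac12u^2\big)\in H^k(\mathbb{T})$, and multiplying by $1/(c-u)\in H^k(\mathbb{T})$ (algebra property) gives $\partial_zu\in H^k(\mathbb{T})$, i.e.\ $u\in H^{k+1}(\mathbb{T})$. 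Iterating shows $u\in H^\infty(\mathbb{T})$, and then $\eta=\big(u-(1-c^2)b_1\big)/(c-u)\in H^\infty(\mathbb{T})$ as well, which is the claim.

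The one step that genuinely requires the hypothesis is the division by $c-u$: it is what makes $1/(c-u)$ a legitimate multiplier on each $H^k(\mathbb{T})$ and, equally, what guarantees the factorization $\partial_z\big(cu-\tfrac12u^2\big)=(c-u)\partial_zu$ loses nothing. I would emphasize that the regularity gain does \emph{not} come from inverting $\c^2(\k|\partial_z|)$ --- which would cost a derivative --- but from using its smoothing in the second equation to upgrade the polynomial $cu-\tfrac12u^2$ and then recovering $\partial_zu$ through the algebraic factorization; this is the only nontrivial point in an otherwise routine argument.
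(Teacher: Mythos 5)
Your proof is correct and follows essentially the same bootstrap as the paper: both arguments hinge on the one-derivative gain of $\c^2(\k|\partial_z|)$ from \eqref{E:c-bound} together with the hypothesis that $c-u$ is bounded away from zero, which licenses division by $c-u$ in each $H^k(\mathbb{T})$. The only cosmetic difference is that the paper differentiates both equations and bootstraps the system $\eta'=\frac{1+\eta}{c-u}u'$, $u'=\frac{1}{c-u}\c^2(\k|\partial_z|)\eta'$, whereas you solve the first equation algebraically for $\eta$ and recover $\partial_zu$ from the factorization $\partial_z(cu-\tfrac12u^2)=(c-u)\partial_zu$; both routes are equally valid.
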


\begin{proof}
We differentiate \eqref{E:periodic} to arrive at
\begin{align}
-c\eta'+u'+u\eta'+u'\eta=0\quad&\text{and}\quad -cu'+\c^2(\k|\partial_z|)\eta'+uu'=0,\notag
\intertext{whence}
\eta'=\frac{1+\eta}{c-u}u'\quad&\text{and}\quad u'=\frac{1}{c-u}\c^2(\k|\partial_z|)\eta'.\label{E:hu'}
\end{align}
Here and elsewhere, the prime means ordinary differentiation. 

Note that $\frac{1}{c-u}:H^1(\mathbb{T})\to H^1(\mathbb{T})$ by hypothesis. Since $\eta'\in L^2(\mathbb{T})$ by hypothesis, it follows from the latter equation of \eqref{E:hu'} and \eqref{E:c-bound} that $u'\in H^1(\mathbb{T})$. It then follows from the former equation of \eqref{E:hu'} and a Sobolev inequality that $\eta'\in H^1(\mathbb{T})$. In other words, $\eta$, $u\in H^2(\mathbb{T})$. A bootstrap argument completes the proof. 
\end{proof}

\subsection*{Notation}
Throughout, we use 
\begin{equation}\label{def:uv}
\u=\hu \quad\text{and}\quad \v=\hv
\end{equation}
whenever it is convenient to do so.

\subsection{An operator equation}\label{ss:operator}

Let $\f: H^1(\mathbb{T}) \times H^1(\mathbb{T}) \times \mathbb{R}^+ \times \mathbb{R}^+\times \mathbb{R}\times \mathbb{R} \to H^1(\mathbb{T}) \times H^1(\mathbb{T})$ such that
\begin{equation} \label{def:f}
\f(\u,c;\k,b_1,b_2)=
\begin{pmatrix}-c\eta+u+u\eta-(1-c^2)b_1 \\ -cu+\c^2(\k|\partial_z|)\eta+\frac12u^2-(1-c^2)b_2\end{pmatrix}.
\end{equation}
It is well defined by \eqref{E:c-bound} and a Sobolev inequality. We seek a solution $\u\in H^1(\mathbb{T})\times H^1(\mathbb{T})$, $c>0$, and $\k>0$, $b_1$, $b_2\in \mathbb{R}$ of 
\begin{equation}\label{E:f=0}
\f(\u,c;\k,b_1,b_2)=\mathbf{0}
\end{equation} 
satisfying $c-\|u\|_{L^\infty(\mathbb{T})}\geq \epsilon>0$ for some $\epsilon$ and, by virtue of Lemma~\ref{lem:regularity}, a solution $\u\in H^\infty(\mathbb{T})\times H^\infty(\mathbb{T})$ of \eqref{E:periodic}. Note that $\f$ is invariant under \eqref{E:invariance} for any $z_0\in\mathbb{R}$. Hence we may assume that $\u$ is even.

For any $c>0$, and $\k>0$, $b_1$, $b_2\in\mathbb{R}$, note that 
\[
\partial_{\u}\f(\u,c;\k,b_1,b_2)=\begin{pmatrix}u-c & 1+\eta\\ \c^2(\k|\partial_z|) & u-c\end{pmatrix}:
H^1(\mathbb{T}) \times H^1(\mathbb{T}) \to H^1(\mathbb{T}) \times H^1(\mathbb{T})
\]
is continuous by \eqref{E:c-bound} and a Sobolev inequality. For any $\u\in  H^1(\mathbb{T}) \times H^1(\mathbb{T})$, and $\k>0$, $b_1$, $b_2\in\mathbb{R}$, moreover,
\[
\partial_c\f(\u,c;\k,b_1,b_2)=\begin{pmatrix} -\eta+2cb_1\\ -u+2cb_2\end{pmatrix}:
\mathbb{R} \to H^1(\mathbb{T}) \times H^1(\mathbb{T})
\]
is continuous. Here (by abuse of notation) $\partial$ means Fr\'echet differentiation. Since
\[
\partial_{\k}\f(\u,c;\k,b_1,b_2)=
\begin{pmatrix} 0\\ \frac{1}{\k}(\text{sech}^2(\k|\partial_z|)-\c^2(\k|\partial_z|)) \end{pmatrix},
\]
and
\[
\partial_{b_1}\f(\u,c;\k,b_1,b_2)=\begin{pmatrix} c^2-1\\0\end{pmatrix}, \qquad 
\partial_{b_2}\f(\u,c;\k,b_1,b_2)=\begin{pmatrix} 0\\ c^2-1\end{pmatrix}
\]
are continuous, likewise, $\f$ depends continuously differentiably on its arguments. Furthermore, since the Fr\'echet derivatives of $\f$ with respect to $\u$, and $c$, $b_1$, $b_2$ of all orders $\geq 3$ are zero everywhere by brutal force, and since $\c^2$ is a real analytic function, $\f$ is a real analytic operator. 

\subsection{Bifurcation condition}\label{ss:bifurcation}

For any $c>0$ for any $\k>0$, $b_1$, $b_2\in \mathbb{R}$ and $|b_1|$, $|b_2|$ sufficiently small, note that 
\begin{equation}\label{E:hu0'}
\begin{aligned}
\eta_0(c;\k,b_1,b_2)&=b_1c+b_2+O((b_1+b_2)^2),\\
u_0(c;\k,b_1,b_2)&=b_1+b_2c+O((b_1+b_2)^2)
\end{aligned}
\end{equation}
make a constant solution of \eqref{def:f}-\eqref{E:f=0} and, hence, \eqref{E:periodic}. Let $\u_0=\begin{pmatrix} \eta_0 \\ u_0\end{pmatrix}(c;\k,b_1,b_2)$. It follows from the implicit function theorem that if non-constant solutions of \eqref{def:f}-\eqref{E:f=0} and, hence, \eqref{E:periodic} bifurcate from $\u=\u_0$ for some $c=c_0$ then, necessarily,
\[
\mathbf{L}_0:=\partial_\u\f(\u_0,c_0;\k,b_1,b_2): 
H^1(\mathbb{T}) \times H^1(\mathbb{T}) \to H^1(\mathbb{T}) \times H^1(\mathbb{T})
\]
is {\em not} an isomorphism. Here $\u_0$ depends on $c_0$. But we suppress it for simplicity of notation. Note that
\begin{equation}\label{E:hu1'}
\mathbf{L}_0\u_1e^{\pm inz}
=\begin{pmatrix}u_0-c_0&1+\eta_0\\ \c^2(\k|\partial_z|)&u_0-c_0\end{pmatrix}\u_1e^{\pm inz}=\mathbf{0} 
\qquad \text{for $n\in\mathbb{Z}$}
\end{equation}
for some nonzero $\u_1$ if and only if
\begin{equation}\label{E:bifurcation}
(c_0-u_0)^2=\c^2(n\k)(1+\eta_0).
\end{equation}
For $b_1=b_2=0$ and, hence, $\eta_0=u_0=0$ by \eqref{E:hu0'}, it simplifies to $c_0=\pm\c(n\k)$ --- the phase velocity of a $2\pi/n\k$ periodic wave in the linear theory; $\pm$ indicate right and left propagating waves, respectively. Without loss of generality, here we restrict the attention to $n=1$ and we assume the $+$ sign. For $|b_1|$ and $|b_2|$ sufficiently small, \eqref{E:bifurcation} becomes
\[
c_0=\c(\k)+b_1\Big(\frac12\c^2(\k)+1\Big)+\frac32b_2\c(\k)+O((b_1+b_2)^2).
\]
Substituting it into \eqref{E:hu0'}, we find
\begin{align*}
\eta_0&(\k,b_1,b_2)=b_1\c(\k)+b_2+O((b_1+b_2)^2),\\
u_0&(\k,b_1,b_2)=b_1+b_2\c(\k)+O((b_1+b_2)^2).
\end{align*} 
They agree with \eqref{E:hu0}. In the sequel, $\u_0=\begin{pmatrix}\eta_0\\u_0\end{pmatrix}(\k, b_1,b_2)$ and $c_0=c_0(\k,b_1,b_2)$.

\begin{figure}[h]
\includegraphics[scale=0.5]{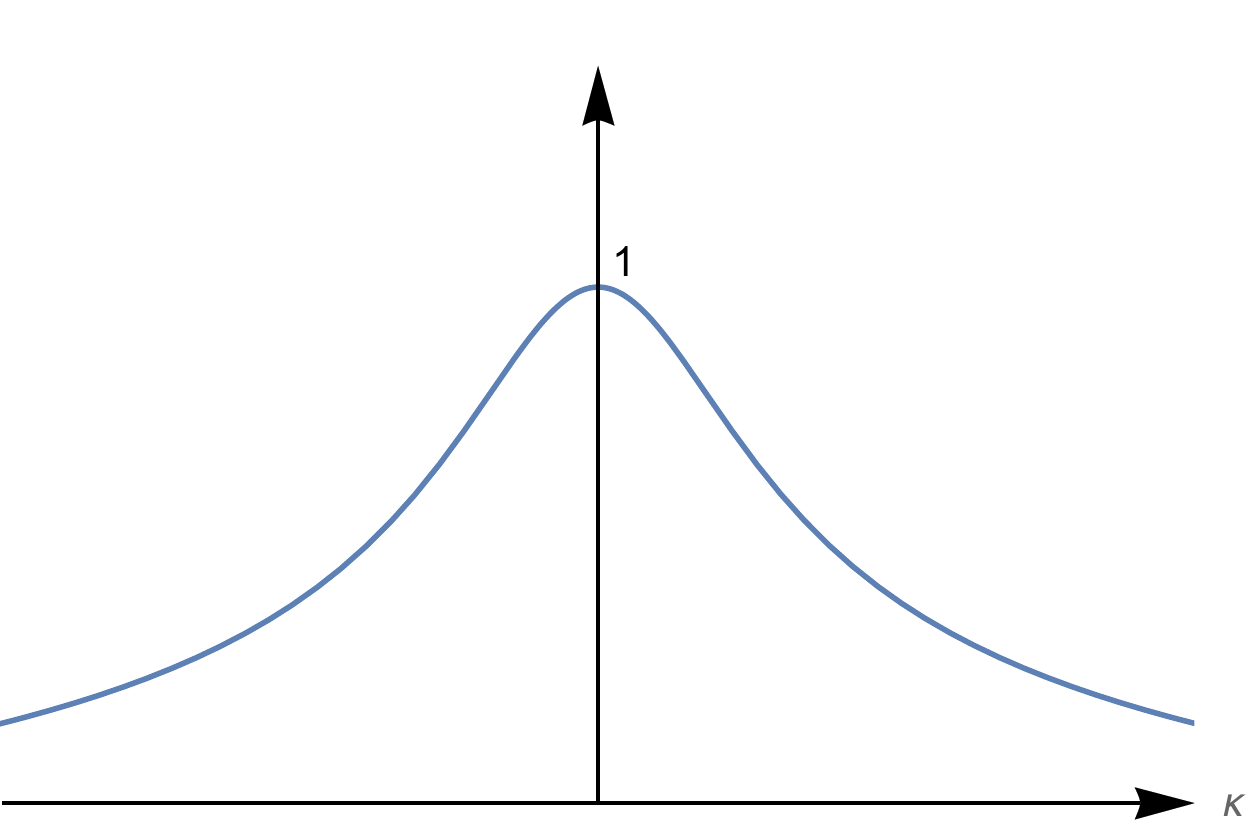}\caption{Schematic plot of $\c$.}\label{fig:c}
\end{figure}

Since $\c(\k)>\c(n\k)$ for $n=2,3,\dots$ pointwise in $\mathbb{R}$ (see Figure~\ref{fig:c}), a straightforward calculation reveals that for any $\k>0$, $b_1$, $b_2\in\mathbb{R}$ and $|b_1|$, $|b_2|$ sufficiently small, the $H^1(\mathbb{T}) \times H^1(\mathbb{T})$ kernel of $\mathbf{L}_0=\partial_\u\f(\u_0,c_0;\k,b_1,b_2)$ is two dimensional and spanned by $\u_1e^{\pm iz}$ for some nonzero $\u_1$ satisfying \eqref{E:hu1'}. Note from \eqref{E:hu1'} and \eqref{E:hu0} that 
\begin{equation}\label{E:hu1}
\u_1=\begin{pmatrix} 1+\eta_0\\c_0-u_0\end{pmatrix}=
\begin{pmatrix}1+b_1\c(\k)+b_2\\ \c(\k)+\frac12b_1\c^2(\k)+\frac12b_2\c(\k)\end{pmatrix}+O((b_1+b_2)^2)
\end{equation}
as $b_1$, $b_2\to0$ up to the multiplication by a constant. This agrees with \eqref{E:h-small} and \eqref{E:u-small} at the order of $a$. By the way, in the presence of the effects of surface tension, if $\c(\k)=\c(n\k)$ for some integer $n\geq 2$ for some $\k>0$, resulting in the resonance of the fundamental mode and the $n$-th harmonic, then the kernel would be four dimensional; see Section~\ref{sec:ST} for details. 

Moreover, a straightforward calculation reveals that for any $\k>0$, $b_1$, $b_2 \in \mathbb{R}$ and $|b_1|$, $|b_2|$ sufficiently small,  the $H^1(\mathbb{T}) \times H^1(\mathbb{T})$ co-kernel of $\mathbf{L}_0$ is two dimensional and  spanned by $\u_1^\perp e^{\pm iz}$ for some $\u_1^\perp$ orthogonal to $\u_1$. In particular, $\mathbf{L}_0$ is a Fredholm operator of index zero. 

\subsection{Lyapunov-Schmidt procedure}\label{ss:LS}

For any $\k>0$, $b_1$, $b_2 \in \mathbb{R}$ and $|b_1|$, $|b_2|$ sufficiently small, we turn the attention to non-constant solutions of \eqref{def:f}-\eqref{E:f=0} and, hence, \eqref{E:periodic} bifurcating from $\u=\u_0$ and $c=c_0$, where $\eta_0$, $u_0$, and $c_0$ are in \eqref{E:hu0}. A Lyapunov-Schmidt procedure (see \cite[Section~2.7.6]{Nir}, for instance) is instrumental for the purpose. Here the proof follows along the same line as the arguments in \cite{HJ2, HP1}, but with suitable modifications to accommodate product spaces. Throughout the subsection, $\k$, and $b_1$, $b_2$ are fixed and suppressed for simplicity of notation.  

Recall $\f(\u_0,c_0)=\mathbf{0}$, where $\f$ is in \eqref{def:f}. Recall $\mathbf{L}_0\u_1e^{\pm iz}=0$, where $\mathbf{L}_0$ is in \eqref{E:hu1'} and $\u_1$ is in \eqref{E:hu1}. We write that
\begin{equation}\label{def:soln2}
\u(z)=\u_0+\frac12\u_1(ae^{iz}+a^*e^{-iz})+\u_r(z)
\quad\text{and}\quad c=c_0+c_r,
\end{equation}
and we require that $a\in \mathbb{C}$, $\u_r=\begin{pmatrix}\eta_r\\u_r\end{pmatrix}\in H^1(\mathbb{T})\times H^1(\mathbb{T})$ be even and
\begin{align}\label{E:ortho}
\l\u_r, \u_1(ae^{iz}+a^*e^{-iz})\r= 
\frac{1}{2\pi}\int^\pi_{-\pi}(&(1+\eta_0)\eta_r(z)(ae^{iz}+a^*e^{-iz})\\ 
&+(c_0-u_0)u_r(z)(ae^{iz}+a^*e^{-iz}))~dz=0,\hspace*{-10pt}\notag
\end{align}
and $c_r\in\mathbb{R}$. Here and elsewhere, the asterisk means complex conjugation; $\l\cdot\,,\cdot\r$ is the $L^2(\mathbb{T})\times L^2(\mathbb{T})$ inner product.

Substituting \eqref{def:soln2} into \eqref{def:f}-\eqref{E:f=0}, we use $\f(\u_0;c_0)=\mathbf{0}$, and \eqref{E:hu1'}, \eqref{E:hu1}, and we make an explicit calculation to arrive at
\begin{align}\label{E:Lv=g}
\mathbf{L}_0\u_r=&
-\begin{pmatrix} \left(\frac12(c_0-u_0)(ae^{iz}+a^*e^{-iz})+u_r\right)
\left(\frac12(1+\eta_0)(ae^{iz}+a^*e^{-iz})+\eta_r\right)\\
\frac12\left(\frac12(c_0-u_0)(ae^{iz}+a^*e^{-iz})+u_r\right)^2\end{pmatrix}\\
&+c_r \begin{pmatrix}\frac12(1+\eta_0)(ae^{iz}+a^*e^{-iz})+\eta_r \\
\frac12(c_0-u_0)(ae^{iz}+a^*e^{-iz})+u_r\end{pmatrix} \notag \\
=:&\g(\u_r;a,a^*,c_r) \notag
\end{align}
up to terms of order $c_r$ as $c_r\to 0$. Recall that $\f$ is a real analytic operator. Hence $\g$ depends analytically on its arguments. Clearly, $\g(\mathbf{0};0,0,c_r)=0$ for any $c_r\in\mathbb{R}$. 

Recall that $\mathbf{L}_0$ is a Fredholm operator of index zero,
\[
\ker\mathbf{L}_0=\text{span}\{\u_1e^{\pm iz}\}\quad\text{and}\quad 
\text{co-ker\,}\mathbf{L}_0=\text{span}\{\u_1^\perp e^{\pm iz}\},
\]
where $\u_1$ and $\u_1^\perp$ are orthogonal to each other. Let $\Pi$ denote the spectral projection of $L^2(\mathbb{T})\times L^2(\mathbb{T})$ onto the kernel of $\mathbf{L}_0$. Specifically, if $\v={\displaystyle \sum_{n\in\mathbb{Z}}\begin{pmatrix} \widehat{\zeta}(n) \\ \widehat{v}(n)\end{pmatrix}e^{inz}}$ in the Fourier series then
\begin{align*}
\Pi\v=&\l\v,\u_1e^{iz}\r\u_1e^{iz}+\l\v,\u_1e^{-iz}\r\u_1e^{-iz}  \\
=&((1+\eta_0)(\widehat{\zeta}(1)e^{iz}+\widehat{\zeta}(-1)e^{-iz})
+(c_0-u_0)(\widehat{v}(1)e^{iz}+\widehat{v}(-1)e^{-iz}))\u_1.
\end{align*}
Since $\Pi\u_r=0$ by \eqref{E:ortho}, we may recast \eqref{E:Lv=g} as
\begin{equation}\label{E:LS}
\mathbf{L}_0\u_r=(1-\Pi)\g(\u_r;a,a^*, c_r)\quad\text{and}\quad \mathbf{0}=\Pi\g(\u_r;a,a^*, c_r).
\end{equation}
Moreover, $\mathbf{L}_0: (1-\Pi)(H^1(\mathbb{T})\times H^1(\mathbb{T})) \to \text{range\,}\mathbf{L}_0$ is invertible. Specifically, if 
\[
\v={\displaystyle \begin{pmatrix} 1+\eta_0 \\ u_0-c_0 \\ \end{pmatrix}(v_{+1}e^{iz}+v_{-1}e^{-iz})
+\sum_{n\neq\pm1}\begin{pmatrix} \widehat{\zeta}(n) \\ \widehat{v}(n)\end{pmatrix}e^{inz}},
\] 
for some constants $v_{\pm1}$, belongs to the range of $\mathbf{L}_0$ by \eqref{E:hu1'} then
\begin{align*}
\mathbf{L}_0^{-1}\v(z)=&\begin{pmatrix} 0 \\1 \end{pmatrix}(v_{+1}e^{iz}+v_{-1}e^{-iz}) \\
&+\sum_{n\neq\pm1}\frac{1}{(u_0-c_0)^2-\c^2(n\k)(1+\eta_0)}
\begin{pmatrix}u_0-c_0 & -1-\eta_0 \\ -\c^2(\k n) & u_0-c_0\end{pmatrix}
\begin{pmatrix} \widehat{\zeta}(n) \\ \widehat{v}(n)\end{pmatrix} e^{inz}.
\end{align*}
It is well defined since \eqref{E:bifurcation} holds true if and only if $n=\pm1$. Hence we may recast \eqref{E:LS} as
\begin{equation}\label{E:LS'}
\u_r=\mathbf{L}_0^{-1}(1-\Pi)\g(\u_r;a,a^*, c_r)\quad\text{and}\quad \mathbf{0}=\Pi\g(\u_r;a,a^*,c_r).
\end{equation}
Clearly, $\mathbf{L}_0^{-1}(1-\Pi)\g$ depends analytically on its arguments. Since $\g(\mathbf{0};0,0,c_r)=0$ for any $c_r\in\mathbb{R}$, it follows from the implicit function theorem that a unique solution $\u_r=\u_2(a,a^*,c_r)$ exists to the former equation of \eqref{E:LS'} near $\u_r=\mathbf{0}$ for $a\in\mathbb{C}$ and $|a|$ sufficiently small for any $c_r\in\mathbb{R}$. Note that $\u_2$ depends analytically on its arguments and it satisfies \eqref{E:ortho} for $|a|$ sufficiently small for any $c_r\in\mathbb{R}$. The uniqueness implies
\begin{equation}\label{E:u2=0}
\u_2(0,0,c_r)=\mathbf{0}\qquad \text{for any $c_r\in\mathbb{R}$.}
\end{equation}
Moreover, since \eqref{def:f}-\eqref{E:f=0} and, hence, \eqref{E:LS'} are invariant under \eqref{E:invariance} for any $z_0\in\mathbb{R}$, it follows that
\begin{equation}\label{E:u2-symm}
\u_2(a,a^*,c_r)(z+z_0)=\u_2(ae^{iz_0},a^*e^{-iz_0},c_r)(z) \quad\text{and}\quad
\u_2(a,a^*,c_r)(-z)=\u_2(a,a^*,c_r)(z) 
\end{equation}
for any $z_0\in\mathbb{R}$ for any $a\in\mathbb{C}$, $|a|$ sufficiently small, and $c_r\in\mathbb{R}$.

To proceed, we write the latter equation of \eqref{E:LS'} as 
\[
\Pi \g(\u_2(a,a^*,c_r);a, a^*,c_r)=\mathbf{0}
\]
for $a\in\mathbb{C}$ and $|a|$ sufficiently small for any $c_r\in\mathbb{R}$. This is solvable, provided that
\begin{equation}\label{E:q=0}
\pi_{\pm}(a,a^*,c_r):=\l\g(\u_2(a,a^*, c_r);a,a^*,c_r),\u_1(ae^{iz}\pm a^*e^{-iz})\r=0;
\end{equation}
$\l\cdot\,,\cdot\r$ is the $L^2(\mathbb{T})\times L^2(\mathbb{T})$ inner product. We use \eqref{E:u2-symm}, where $z_0=-2\arg(a)$, and \eqref{E:q=0} to show that
\[
\pi_- (a^*,a,c_r)=\pi_- (a,a^*,c_r)=-\pi_- (a^*,a,c_r).
\]
Hence $\pi_-(a,a^*,c_r)=0$ holds true for any $a\in\mathbb{C}$ and $|a|$ sufficiently small for any $c_r\in\mathbb{R}$. Moreover, we use \eqref{E:u2-symm}, where $z_0=-\arg (a)$, and \eqref{E:q=0} to show that 
\[
\pi_+ (a,a^*,c_r)=\pi_+ (|a|,|a|,c_r).
\] 
Hence it suffices to solve $\pi_+(a,a,c_r)=0$ for $a$, $c_r \in\mathbb{R}$ and $|a|$ sufficiently small. 

Substituting \eqref{E:Lv=g} into \eqref{E:q=0}, where $\u_r=\u_2(a,a,c_r)=:\begin{pmatrix} \eta_2 \\ u_2\end{pmatrix}(a,c_r)$, we make an explicit calculation to arrive at 
\[
\pi_+(a,a,c_r)=:a^2(c_r((1+\eta_0)^2+(c_0-u_0)^2)+\pi_r(a,c_r))
\]
for $a$, $c_r\in\mathbb{R}$ and $|a|$ sufficiently small, where 
\begin{align*}
\pi_r(a,c_r) =&-a^2(1+\eta_0)((c_0-u_0)\l\cos z\,\eta_2(a,c_r),\cos z\r\\
&\hspace*{55pt}+(1+\eta_0)\l\cos z\,u_2(a,c_r),\cos z\r+a^{-1}\l(\eta_2 u_2)(a,c_r),\cos z\r) \\
&-\tfrac12a^2(c_0-u_0)(2(c_0-u_0)\l\cos z\,u_2(a,c_r),\cos z\r+a^{-1}\l u_2^2(a,c_r),\cos z\r);
\end{align*}
$\l\cdot\,,\cdot\r$ means the $L^2(\mathbb{T})$ inner product. We merely pause to remark that $\pi_r$ is well defined. Indeed, $a^{-1}\eta_2$ and $a^{-1}u_2$ are not singular for $a\in\mathbb{R}$ and $|a|$ sufficiently small by \eqref{E:u2=0}. Clearly, $\pi_r$ and, hence, $\pi_+$ depend analytically on their arguments. Since $\pi_r(0,0)=0$ and $(\partial_{c_r}\pi_r)(0,0)=0$ by \eqref{E:u2=0}, it follows from the implicit function theorem that a unique solution $c_r=c_2(a)$ exists to $\pi_+(a,a,c_r)=0$ and, hence, the latter equation of \eqref{E:LS'} near $c_r=0$ for $a\in\mathbb{R}$ and $|a|$ sufficiently small. Clearly, $c_2$ depends analytically on $a$.

To summarize, 
\[
\u_r=\u_2(a,a,c_2(a))\quad\text{and}\quad c_r=c_2(a)
\] 
uniquely solve \eqref{E:Lv=g} for $a\in\mathbb{R}$ and $|a|$ sufficiently small, and by virtue of \eqref{def:soln2}, 
\begin{equation}\label{E:soln2}
\u(a)(z)=\u_0+a\u_1\cos z+\u_2(a,a,c_2(a))(z)\quad\text{and}\quad  c(a)=c_0+c_2(a)
\end{equation}
uniquely solve \eqref{def:f}-\eqref{E:f=0} and, hence, \eqref{E:periodic} for $a\in\mathbb{R}$ and $|a|$ sufficiently small. Note that $\u_2$ and, hence, $\u$ are $2\pi$ periodic and even in $z$. Since $\u_2$ and $c_2$ are near $\mathbf{0}$ and $0$, Lemma~\ref{lem:regularity} implies that $\u$ is smooth in $z$. We claim that $c$ is even in $a$. Indeed, note that \eqref{E:periodic} and, hence, \eqref{def:f}-\eqref{E:f=0} remain invariant under $z\mapsto z+\pi$ by \eqref{E:invariance}. Since $(\partial_a\u)(0)(z)=\u_1\cos z$, however, $\u(z)\neq \u(z+\pi)$ must hold true. Thus $(\partial_ac)(0)=0$. This proves the claim. 
Clearly, $\u$ and $c$ depend analytically on $a\in\mathbb{R}$ and $|a|$ sufficiently small. This completes the existence proof.

\subsection{Small amplitude expansion}\label{ss:small}

It remains to verify \eqref{E:hu-small}. Throughout the subsection, $\k>0$ is fixed and suppressed for simplicity of notation; $b_1$, $b_2\in\mathbb{R}$ and $|b_1|$, $|b_2|$ sufficiently small are fixed. 

Recall from the existence proof that \eqref{E:soln2} depends analytically on $a$, $b_1$, $b_2\in\mathbb{R}$ and $|a|$, $|b_1|$, $|b_2|$ sufficiently small. We write that
\begin{align*}
\eta(a;b_1,b_2)(z)=&\eta_0(b_1,b_2)+a(1+\eta_0(b_1,b_2))\cos z \\
&+a^2\eta_2(z)+a^3\eta_3(z)+O(a^4+a^2(b_1+b_2)+a(b_1+b_2)^2), \\
u(a;b_1,b_2)(z)=&u_0(b_1,b_2)+a(c_0-u_0)(b_1,b_2)\cos z \\
&+a^2u_2(z)+a^3u_3(z)+O(a^4+a^2(b_1+b_2)+a(b_1+b_2)^2),
\intertext{and}
c(a;b_1,b_2)=&c_0(b_1,b_2)+a^2c_2+O(a^4+a^2(b_1+b_2)+a(b_1+b_2)^2)
\end{align*}
as $a,b_1,b_2\to0$, where $\eta_0$, $u_0$, and $c_0$ are in \eqref{E:hu0}, and we require that $\eta_2$, $u_2$, and $\eta_3$, $u_3$ be $2\pi$ periodic, even, and smooth functions of $z$, and $c_2\in\mathbb{R}$. We merely pause to remark that $\eta_2$, $u_2$, $\eta_3$, $u_3$, and $c_2$ do {\em not} depend on $b_1$ and $b_2$, whereas $\eta_0$, $u_0$, and $c_0$ do. In the following sections, we restrict the attention to periodic traveling waves of \eqref{E:main}-\eqref{def:c} for $a\in\mathbb{R}$ and $|a|$ sufficiently small for $b_1=b_2=0$, and we calculate the spectrum of the associated linearized operator up to the order of $a$. (The index formulae would become unwieldy when terms of order $a^2$ were to be added.) For the purpose, it suffices to calculate solutions explicitly up to terms of orders $a^2$, and $ab_1$, $ab_2$. 

Substituting the above into \eqref{E:periodic}, we recall that $\eta_0$, $u_0$, and $c_0$ solve \eqref{E:periodic}, and we make an explicit calculation to arrive, at the order of $a$, at
\begin{align*}
&-c_0(1+\eta_0)\cos z+(c_0-u_0)\cos z=0,\\
&-c_0(c_0-u_0)\cos z+\c^2(\k|\partial_z|)(1+\eta_0)\cos z=0.
\end{align*}
This holds true up to terms of orders $b_1$ and $b_2$ by \eqref{def:ck'}, \eqref{E:c0}, and \eqref{E:hu1'}, \eqref{E:hu1}. 

To proceed, we assume $b_1=b_2=0$ and, hence, $\eta_0=u_0=0$ and $c_0=\c(\k)$ by \eqref{E:hu0}. At the order of $a^2$, we gather
\begin{align*}
&-c_0\eta_2+u_2+c_0\cos^2 z=0,\\
&-c_0u_2+\c^2(\k|\partial_z|)\eta_2+\tfrac12c_0^2\cos^2 z=0.
\end{align*}
We then use \eqref{def:ck'}, \eqref{E:c0} and we make an explicit calculation to find
\begin{equation}\label{E:hu2}
\eta_2(z)=h_0+h_2\cos 2z\quad\text{and}\quad u_2(z)=h_0-\frac12+\Big(h_2-\frac12\Big)\cos 2z,
\end{equation}
where $h_0$ and $h_2$ are in \eqref{def:h02}. 
Continuing, at the order of $a^3$, we gather
\begin{align*}
&-c_0\eta_3-c_2\cos z+u_3+u_2\cos z+c_0\eta_2\cos z=0,\\
&-c_0u_3-c_2c_0\cos z+\c^2(\k|\partial_z|)\eta_3+c_0u_2\cos z=0.
\end{align*}
Taking $L^2(\mathbb{T})$ inner products, we use \eqref{def:ck'} and \eqref{E:hu2}, so that
\begin{align*}
&-c_0\l\eta_3,\cos z\r-c_2+\l u_3,\cos z\r+h_0-\frac12+\frac12\Big(h_2-\frac12\Big)+c_0\Big(h_0+\frac12h_2\Big)=0,\\
&-c_0\l u_3,\cos z\r-c_2c_0+\c^2(\k)\l\eta_3,\cos z\r+c_0\Big(h_0-\frac12+\frac12\Big(h_2-\frac12\Big)\Big)=0.
\end{align*}
We then use \eqref{E:c0} and we make an explicit calculation to find 
\[
c_2=\frac34\c(\k)(2h_0+h_2-1).
\] 
This completes the proof.

\section{Modulational instability}\label{sec:MI}

Let $\eta=\eta(a;\k,b_1,b_2)$, $u=u(a;\k,b_1,b_2)$, and $c=c(a;\k,b_1,b_2)$, for some $a\in\mathbb{R}$ and $|a|$ sufficiently small, $\k>0$, $b_1$, $b_2\in\mathbb{R}$ and $|b_1|$, $|b_2|$ sufficiently small, denote a $2\pi/\k$-periodic wave train of \eqref{E:main}-\eqref{def:c}, whose existence follows from Theorem~\ref{thm:existence}. We address its stability and instability to ``slow modulations." Throughout the section, we employ the notation of \eqref{def:uv} whenever it is convenient to do so.

\subsection*{Well-posedness}

The solution of the linear part of \eqref{E:main}-\eqref{def:c} does not possess smoothing effects. Hence it is difficult to work out the well-posedness in spaces of low regularities. But, for the present purpose, it suffices to solve the Cauchy problem in some functional analytic setting. In Appendix~\ref{sec:LWP}, we establish the local-in-time well-posedness for \eqref{E:main}-\eqref{def:c} in $H^s(\mathbb{R})\times H^{s+1/2}(\mathbb{R})$ for any $s>2$.

\subsection{Spectral stability and instability}\label{ss:stability}

Intuitively, the stability of $\u$ means that if we perturb $\u$ at time $t=0$ then the solution at later times remains near (a spatial translate of) it. In a leading approximation, we will linearize \eqref{E:main}-\eqref{def:c} about $\u$ in the coordinate frame moving at the speed $c$. Recall that $\u$ and $c$ solve \eqref{E:periodic} and $z=\k x$. The result becomes 
\[
\partial_t\v=\k\partial_z \begin{pmatrix} c-u &-1-\eta \\ -\c^2(\k|\partial_z|) & c-u \end{pmatrix} \v.
\]
We seek a solution of the form $\v(z,t)=e^{\lambda \k t}\v(z)$, $\lambda\in\mathbb{C}$, to arrive at
\begin{equation}\label{def:L}
\lambda\v=\partial_z\begin{pmatrix} c-u &-1-\eta \\ -\c^2(\k|\partial_z|) & c-u \end{pmatrix}\v
=:\mathcal{L}(a;\k,b_1,b_2)\v,
\end{equation}
where
\[
\L:H^1(\mathbb{R})\times H^1(\mathbb{R}) \subset 
L^2(\mathbb{R})\times L^2(\mathbb{R}) \to L^2(\mathbb{R})\times L^2(\mathbb{R}).
\] 
We say that $\u$ is {\em spectrally unstable} to square integrable perturbations if the $L^2(\mathbb{R}) \times L^2(\mathbb{R})$ spectrum of $\L$ intersects the open right-half plane of $\mathbb{C}$, and it is spectrally stable otherwise. Note that $\eta$ and $u$ are $2\pi$ periodic in $z$, but $\v$ needs not. 

Note that \eqref{def:L} remains invariant under 
\[
\lambda\mapsto \lambda^*\quad\text{and}\quad \v\mapsto \v^*,
\] 
where $*$ means complex conjugation, and under
\[
\lambda\mapsto -\lambda\quad\text{and}\quad z\mapsto -z.
\]
Together, the spectrum of $\L$ is symmetric with respect to the reflections in the real and imaginary axes. Hence $\u$ is spectrally unstable if and only if the spectrum of $\L$ is not contained in the imaginary axis.

Spectral instability reported herein promotes spatially localized and temporally exponentially growing solutions of \eqref{E:main}-\eqref{def:c}. We will investigate this in a future publication. In stark contrast, spectral stability does {\em not} in general imply nonlinear stability.  

\subsection{Floquet characterization of the spectrum}\label{ss:Floquet}

It is well known (see \cite[Section~8.16]{RSIV} and \cite[Section~2.4]{Chicone}, for instance, for details; see also \cite{BHJ}) that the $L^2(\mathbb{R}) \times L^2(\mathbb{R})$ spectrum of $\L$, which by the way involves periodic coefficients, contains no eigenvalues. Rather, it consists of the essential spectrum. Moreover, a nontrivial solution of \eqref{def:L} does not belong to $L^p(\mathbb{R}) \times L^p(\mathbb{R})$ for any $p\in[1,\infty)$. Rather, if $\v\in L^\infty(\mathbb{R})\times L^\infty(\mathbb{R})$ solves \eqref{def:L} then, necessarily,
\[
\v(z)=e^{i\xi z}\p(z),\qquad\text{where\quad $\p(z+2\pi)=\p(z)$},
\]
for some $\xi$ in the range $(-1/2,1/2]$, the {\em Floquet exponent}. We take a Floquet theory approach to characterize the $L^2(\mathbb{R}) \times L^2(\mathbb{R})$ spectrum of $\L$ in a convenient form. By the way, $\L$ involves a nonlocal operator. Hence classical Floquet theory is not directly applicable. Details are found in \cite{BHJ}, for instance, and references therein. Hence we merely hit the main points. 

We begin by writing $v\in L^2(\mathbb{R})$ as
\[
v(z)=\frac{1}{\sqrt{2\pi}}\int^{1/2}_{-1/2} \Big(\sum_{n\in\mathbb{Z}} \widehat{v}(n+\xi)e^{inz}\Big)e^{i\xi z}~d\xi
=:\int^{1/2}_{-1/2}v(\xi)(z)e^{i\xi z}~d\xi,
\]
where $\widehat{v}$ means the Fourier transform of $v$. It is well defined in the Schwartz class by the Fubini theorem and the dominated convergence theorem, and it is extended to $L^2(\mathbb{R})$ by a density argument. Note that $v(\xi) \in L^2(\mathbb{T})$ for any $\xi\in(-1/2,1/2]$. The Parseval theorem asserts that 
\[
\|v\|_{L^2(\mathbb{R})}^2=\|\widehat{v}\|_{L^2(\mathbb{R})}^2=\int^{1/2}_{-1/2}\|v(\xi)\|_{L^2(\mathbb{T})}^2~d\xi.
\]
Hence $v\mapsto v(\xi)$ is an isomorphism between $L^2(\mathbb{R})$ and $L^2((-1/2,1/2]; L^2(\mathbb{T}))$. Let $\M:L^2(\mathbb{R}) \to L^2(\mathbb{R})$ denote a Fourier multiplier operator, defined as
\[
\widehat{\M v}(\k)=m(\k)\widehat{v}(\k)
\]
for a suitable function $m$, real valued and Lebesgue measurable. It is straightforward to verity that 
\[
(\M v)(\xi)=e^{-i\xi z}\M e^{i\xi z}v(\xi)=:\M(\xi)v(\xi)
\]
for any $v\in L^2(\mathbb{R})$ and $\xi\in(-1/2,1/2]$. Note that $\M(\xi): L^2(\mathbb{T}) \to L^2(\mathbb{T})$ for any~$\xi$. Moreover, for a suitable function $f$, 
\[
(fv)(\xi)=fv(\xi)\qquad\text{for any $v\in L^2(\mathbb{R})$ and $\xi\in(-1/2,1/2]$}.
\]
We extend this to product spaces in the usual manner. It is then straightforward to verify that
\[
(\L\v)(\xi)=e^{-i\xi z}\L e^{i\xi z}\v(\xi)=:\L(\xi)\v(\xi)
\]
for any $\v\in L^2(\mathbb{R})\times L^2(\mathbb{R})$ and $\xi\in(-1/2,1/2]$. Note that
\[
\L(\xi):H^1(\mathbb{T})\times H^1(\mathbb{T})\subset 
L^2(\mathbb{T})\times L^2(\mathbb{T}) \to L^2(\mathbb{T})\times L^2(\mathbb{T})
\]
for any $\xi\in(-1/2,1/2]$. 

Furthermore (see \cite[Section~8.16]{RSIV}, for instance, for details; see also \cite{BHJ}), $\lambda$ belongs to the $L^2(\mathbb{R}) \times L^2(\mathbb{R})$ spectrum of $\L$ if and only if it belongs to the $L^2(\mathbb{T}) \times L^2(\mathbb{T})$ spectrum of $\L(\xi)$ for some $\xi\in(-1/2,1/2]$. That is, 
\begin{equation}\label{def:Lx}
\lambda\p=e^{-i\xi z}\partial_z\begin{pmatrix} c-u &-1-\eta \\ -\c^2(\k|\partial_z|) & c-u \end{pmatrix}e^{i\xi z}\p
\end{equation}
for some nontrivial $\p\in L^2(\mathbb{T})\times L^2(\mathbb{T})$ and $\xi\in(-1/2,1/2]$. Hence
\[
\text{spec}_{L^2(\mathbb{R})\times L^2(\mathbb{R})}(\L)
=\bigcup_{\xi\in[-1/2,1/2)}\text{spec}_{L^2(\mathbb{T})\times L^2(\mathbb{T})}(\L(\xi)).
\]
Note that for any $\xi \in(-1/2,1/2]$, the $L^2(\mathbb{T})\times L^2(\mathbb{T})$ spectrum of $\L(\xi)$ consists of eigenvalues with finite multiplicities. Thus we characterize the essential spectrum of $\L$ as a one parameter family of point spectra of $\L(\xi)$ for $\xi\in (-1/2,1/2]$. 

Note that \eqref{def:Lx}, when $\pm\xi$ are taken in pair, remains invariant under
\[
\lambda\mapsto \lambda^*\quad\text{and}\quad \p\mapsto \p^*,
\] 
and under
\[
\lambda\mapsto -\lambda\quad\text{and}\quad z\mapsto -z.
\]
Hence we may assume $\xi\in[0,1/2]$.

\subsection{Definition of modulational instability}\label{ss:MI}

Note that $\xi=0$ corresponds to the same period perturbations as $\u$. Moreover, $\xi>0$ and small corresponds to long wavelength perturbations, whose effects are to slowly vary the period and other wave characteristics, such as the amplitude. They supply the spectral information of $\L$ in the vicinity of the origin in $\mathbb{C}$; see \cite{BHJ}, for instance, for details. We then say that $\u$ is {\em modulationally unstable} if the $L^2(\mathbb{T})\times L^2(\mathbb{T})$ spectra of $\L(\xi)$ are not contained in the imaginary axis near the origin for $\xi>0$ and small, and it is modulationally stable otherwise.

For an arbitrary $\xi$, one must study \eqref{def:Lx} numerically except for few cases --- for instance, completely integrable systems (see \cite{BHJ}, for instance, for references). But, for $\xi>0$ and small for $\lambda$ in the vicinity of the origin in $\mathbb{C}$, we may take a spectral perturbation approach in \cite{HJ2, HP1}, for instance, to address it analytically. This is the subject of investigation here. 

\subsection*{Notation} 

In the remaining of the section, $\k>0$ is suppressed for simplicity of notation, unless specified otherwise. We assume $b_1=b_2=0$. For nonzero $b_1$ and $b_2$, one may explore in like manner. But the calculation becomes lengthy and tedious. Hence we do not discuss the details. We use 
\begin{equation}\label{def:Lxa}
\L(\xi,a)=\L(\xi)(a;\k,0,0)
\end{equation}
for simplicity of notation. 

\subsection{Spectra of $\L(\xi,0)$}\label{ss:a=0}
We begin by discussing the $L^2(\mathbb{T})\times L^2(\mathbb{T})$ spectra of $\L(\xi,0)$ for $\xi\in[0,1/2]$. This is the linearization of \eqref{E:main}-\eqref{def:c} about $\eta=u=0$ and $c=\c(\k)$ --- namely, the rest state --- in the moving coordinate frame. 

Note from \eqref{def:Lx} and \eqref{E:hu0} that
\[
\L(\xi,0)=e^{-i\xi z}\partial_z \begin{pmatrix} \c(\k) &-1\\ -\c^2(\k|\partial_z|) & \c(\k)\end{pmatrix}e^{i\xi z}.
\]
We use \eqref{def:ck} and make an explicit calculation to show that 
\begin{equation}\label{E:Lx0=0}
\mathcal{L}(\xi,0)\e(n+\xi,\pm)=i\omega(n+\xi,\pm)\e(n+\xi,\pm)
\qquad\text{for $n\in\mathbb{Z}$ and $\xi\in[0,1/2]$,}
\end{equation}
where 
\begin{equation}\label{def:eigen}
\omega(n+\xi,\pm)=(n+\xi)(\c(\k)\pm\c(\k(n+\xi)))\quad\text{and}\quad
\e(n+\xi,\pm)(z)=\begin{pmatrix}1\\ \mp\c(\k(n+\xi))\end{pmatrix}e^{inz}.
\end{equation}
Hence for any $\xi\in[0,1/2]$, the spectrum of $\L(\xi,0)$ consists of {\em two} families of infinitely many and purely imaginary eigenvalues, each with finite multiplicity. In particular, the rest state of \eqref{E:main}-\eqref{def:c} is spectrally stable to square integrable perturbations. 

The spectrum of the linear operator associated with the water wave problem consists of $i\omega(n+\xi,\pm)$ for $n\in\mathbb{Z}$ and $\xi\in[-1/2,1/2)$; see \cite{Whitham}, for instance, for details. To compare, the spectrum of the linear operator for the Whitham equation (see \eqref{E:Whitham}) consists of $i\omega(n+\xi,-)$ for $n\in\mathbb{Z}$ and $\xi\in[-1/2,1/2)$; see \cite{HJ2}, for instance, for details. 
Perhaps, this is because the Whitham equation merely includes unidirectional propagation. In the following section, we discuss the effects of bidirectional propagation in \eqref{E:main}-\eqref{def:c}.

As $|a|$ increases, the eigenvalues in \eqref{E:Lx0=0} move around and they may leave the imaginary axis to lose the spectral stability. Recall that the spectrum of $\L(\pm\xi,a)$ is symmetric with respect to the reflections in the real and imaginary axes for any $\xi\in[0,1/2]$ for any $a\in\mathbb{R}$ and admissible. Hence a necessary condition of the spectral instability is that a pair of eigenvalues on the imaginary axis collide. 

Note that the eigenfunctions in \eqref{E:Lx0=0} vary, analytically, with $\xi\in[0,1/2]$. To compare, the eigenfunctions of the linear operator for the Whitham equation do not depend on $\xi$; see \cite{HJ2}, for instance, for details.

To proceed, for $\xi=0$, note from \eqref{def:eigen} that
\[
\omega(0,+)=\omega(0,-)=\omega(1,-)=\omega(-1,-)=0.
\]
Since
\begin{align*}
\cdots<\omega(-3,-)<\omega(-2,-)<&0<\omega(2,-)<\omega(3,-)<\cdots
\intertext{and}
\cdots<\omega(-2,+)<\omega(-1,+)<&0<\omega(1,+)<\omega(2,+)<\dots
\end{align*}
by brutal force, zero is an $L^2(\mathbb{T})\times L^2(\mathbb{T})$ eigenvalue of $\L(0,0)$ with multiplicity four. Note that
\begin{equation}\label{def:p00}
\begin{aligned}
&\p_{1}(z):=\frac12(\e(1,-)+\e(-1,-))(z)=\begin{pmatrix}1\\ \c(\k) \end{pmatrix}\cos z, \\
&\p_{2}(z):=\frac{1}{2i}(\e(1,-)-\e(-1,-))(z)=\begin{pmatrix} 1\\ \c(\k) \end{pmatrix}\sin z,\\
&\p_{3}(z):=\frac12((\c(\k)+2)\e(0,+)-(\c(\k)-2)\e(0,-))(z)=\begin{pmatrix} 2\\ -\c(\k)\end{pmatrix},\\
&\p_{4}(z):=\frac12((\c(\k)-2)\e(0,+)+(\c(\k)+2)\e(0,-))(z)=\begin{pmatrix} \c(\k)\\ 2\end{pmatrix}
\end{aligned}
\end{equation}
are the associated eigenfunctions, real valued and orthogonal to each other. 

For $\xi\neq 0$, since $\omega(n+\xi,+)$ increases in $n+\xi$ for any $n\in\mathbb{Z}$ and $\xi\in(0,1/2]$, and since $\omega(n+\xi,-)$ decreases in $n+\xi$ if $-1/2<n+\xi<1/2$ and increases if $n+\xi<-1$ or $n+\xi>1$ by brutal force, it follows that
\[
\omega(1/2,-)\leq\omega(0+\xi,\pm), \omega(\pm1+\xi,-)\leq \omega(1/2,+),
\]
and
\begin{gather*}
\cdots<\omega(-2+\xi,-)<\omega(\xi,-)<0<\omega(-1+\xi,-)<\omega(1+\xi,-)<\omega(2+\xi,-)<\cdots,\\
\cdots<\omega(-2+\xi,+)<\omega(-1+\xi,+)<0<\omega(\xi,+)<\omega(1+\xi,+)<\omega(2+\xi,+)<\cdots.
\end{gather*}
Hence $\omega(n+\xi,\pm)\neq 0$ for any $n\in\mathbb{Z}$ and $\xi\in(0,1/2]$. 
But in Section~\ref{ss:collision}, we observe infinitely many collisions of purely imaginary eigenvalues of $\L(\xi,0)$ away from the origin. To compare, no eigenvalues of the linear operator for the Whitham equation (see \eqref{E:Whitham}) collide other than at the origin; see \cite{HJ2}, for instance.
 
Continuing, for $\xi>0$ and sufficiently small, $i\omega(\xi,\pm)$ and $i\omega(\pm1+\xi,-)$ are $L^2(\mathbb{T})\times L^2(\mathbb{T})$ eigenvalues of $\L(\xi,0)$ in the vicinity of the origin in $\mathbb{C}$. Moreover, (by abuse of notation)
\begin{equation}\label{def:px0}
\begin{aligned}
\p_1(z):=&\frac12\sqrt{\c^2(\k)+1}\left(\frac{\e(1+\xi,-)}{\|\e(1+\xi,-)\|}+\frac{\e(-1+\xi,-)}{\|\e(-1+\xi,-)\|}\right)(z)\\
=&\begin{pmatrix}1\\ \c(\k) \end{pmatrix}\cos z
+i\xi \frac{\k\c'(\k)}{\c^2(\k)+1}\begin{pmatrix} -\c(\k)\\1\end{pmatrix}\sin z
+\xi^2 \mathbf{p}_2\cos z+O(\xi^3),\hspace*{-20pt}\\
\p_2(z):=&\frac{1}{2i}\sqrt{\c^2(\k)+1}\left(\frac{\e(1+\xi,-)}{\|\e(1+\xi,-)\|}-\frac{\e(-1+\xi,-)}{\|\e(-1+\xi,-)\|}\right)(z)\\
=&\begin{pmatrix}1\\ \c(\k) \end{pmatrix}\sin z
-i\xi\frac{ \k\c'(\k)}{\c^2(\k)+1}\begin{pmatrix}-\c(\k)\\ 1\end{pmatrix}\cos z 
+\xi^2 \mathbf{p}_2\sin z+O(\xi^3),\hspace*{-20pt} \\
\p_3(z):=&\frac12((\c(\k)+2)\e(\xi,+)-(\c(\k)-2)\e(\xi,-))(z) \\
=&\begin{pmatrix}2\\-\c(\k)\end{pmatrix}+\frac16\xi^2\k^2\c(\k)\begin{pmatrix} 0\\1\end{pmatrix}+O(\xi^3),\\
\p_4(z):=&\frac12((\c(\k)-2)\e(\xi,+)+(\c(\k)+2)\e(\xi,-))(z) \\
=&\begin{pmatrix}\c(\k)\\ 2 \end{pmatrix}-\frac13\xi^2\k^2\begin{pmatrix} 0\\1\end{pmatrix}+O(\xi^3)
\end{aligned}
\end{equation}
span the associated eigenspace, orthogonal to each other, where $\|\cdot\|=\sqrt{\l\cdot\,,\cdot\r_{L^2(\mathbb{T})\times L^2(\mathbb{T})}}$ and 
\begin{equation}\label{def:p2}
\mathbf{p}_2=\frac12\frac{\k^2}{\c^2(\k)+1} 
\begin{pmatrix}{\displaystyle \c'(\k)^2\frac{2\c^2(\k)-1}{\c^2(\k)+1}-(\c\c'')(\k)}\\ 
{\displaystyle -3\frac{(\c(\c')^2)(\k)}{\c^2(\k)+1}+\c''(\k)} \end{pmatrix}.
\end{equation}
Here and elsewhere, the prime means ordinary differentiation.
For $\xi=0$, note that $\p_1$, $\p_2$, $\p_3$, $\p_4$ become \eqref{def:p00}. Recall that $\c$ is a real analytic function. Hence they depend analytically on $\xi\in[0,1/2]$. 

Note that $\p_1$ and $\p_2$ vary with $\xi>0$ and sufficiently small to the linear order. In the following subsection, we take this into account and construct an eigenspace for $\xi$, $a\neq0$ and sufficiently small, which varies analytically with $\xi$ and $a$; see \eqref{def:p} for details. Consequently, the spectral perturbation calculation in Section~\ref{ss:perturbation} becomes lengthy and complicated. To compare, the eigenfunctions of the linear operator for the Whitham equation (see \eqref{E:Whitham}) do not depend on $\xi$ for any $a\in\mathbb{R}$ and admissible; see \cite{HJ2}, for instance, for details.

Note that $\p_{1}$ and $\p_{2}$ are complex valued. For real valued functions, one must take $\pm\xi$ in pair and deal with six functions. But the spectral perturbation calculation in Section~\ref{ss:perturbation} involves complex valued operators anyway. Hence this is not worth the effort. 

\subsection{Spectra of $\L(\xi,a)$}\label{ss:a}

We turn the attention to the $L^2(\mathbb{T})\times L^2(\mathbb{T})$ spectra of $\L(\xi,a)$ in the vicinity of the origin in $\mathbb{C}$, for $\xi\in[0,1/2]$ for $a\in\mathbb{R}$ and $|a|$ sufficiently small. 

Note from \eqref{def:Lx} and \eqref{E:hu-small} that 
\begin{align*}
\L(\xi,a)=&e^{-i\xi z}\partial_z \begin{pmatrix} c-u & -1-\eta \\ -\c^2(\k|\partial_z|) & c-u \end{pmatrix} e^{i\xi z}\\
=&e^{-i\xi z}\partial_z \left(\begin{pmatrix} \c(\k) & -1 \\ -\c^2(\k|\partial_z|) & \c(\k) \end{pmatrix} 
+a\begin{pmatrix} -\c(\k) & -1 \\ 0 & -\c(\k) \end{pmatrix}\cos z\right)e^{i\xi z}+O(a^2)
\end{align*}
as $a\to 0$, whence 
\[
\|\L(\xi,a)-\L(\xi,0)\|_{L^2(\mathbb{T})\times L^2(\mathbb{T}) \to L^2(\mathbb{T})\times L^2(\mathbb{T})}=O(a)
\]
as $a\to 0$ uniformly for $\xi\in[0,1/2]$. Recall from the previous subsection that the $L^2(\mathbb{T})\times L^2(\mathbb{T})$ spectrum of $\L(\xi,0)$ contains four purely imaginary eigenvalues $i\omega(\xi,\pm)$, $i\omega(\pm1+\xi,-)$ in the vicinity of the origin in $\mathbb{C}$ for $\xi>0$ and sufficiently small. Since $\L(\xi,a)$ depends analytically on $\xi\in[0,1/2]$ and $a\in\mathbb{R}$ admissible, it follows from perturbation theory (see \cite[Section~4.3.1]{K}, for instance, for details) that the $L^2(\mathbb{T})\times L^2(\mathbb{T})$ spectrum of $\L(\xi,a)$ contains four eigenvalues, denoted 
\[
\lambda_1(\xi,a), \lambda_2(\xi,a), \lambda_3(\xi,a), \lambda_4(\xi,a),
\] 
near the origin for $\xi>0$, $a\in\mathbb{R}$ and $\xi$, $|a|$ sufficiently small. 

Moreover, a straightforward calculation reveals that
\[
|\lambda_{k}(\xi,0)-\lambda_{\ell}(\xi,0)|\geq\omega_0>0\qquad \text{for $k,\ell=1,2,3,4$ and $k\neq \ell$}
\]
for any $\xi\geq \xi_0>0$ for any $\xi_0$ for some $\omega_0$. Hence it follows from perturbation theory that $\lambda_1$, $\lambda_2$, $\lambda_3$, $\lambda_4$ remain purely imaginary for any $\xi\geq \xi_0>0$ for any $\xi_0$, for any $a\in\mathbb{R}$ and $|a|$ sufficiently small. In particular, a sufficiently small, periodic wave train of \eqref{E:main}-\eqref{def:c} is spectrally stable to ``short wavelength perturbations" in the vicinity of the origin in $\mathbb{C}$. For $\xi=0$, on the other hand, we demonstrate that four eigenvalues collide at the origin. 

\begin{lemma}[Spectrum of $\L(0,a)$]\label{lem:L}
For $a\in\mathbb{R}$ and $|a|$ sufficiently small, zero is an $L^2(\mathbb{T})\times L^2(\mathbb{T})$ eigenvalue of $\L(0,a)$ with algebraic multiplicity four and geometric multiplicity three. Moreover, (by abuse of notation)
\begin{subequations}\label{def:p0a}
\begin{align} 
\p_{1}(z):=&\frac{2}{\cc^2(\k)+2}((\partial_{b_1}c)(\partial_a\u)-(\partial_{a}c)(\partial_{b_1}\u))(z) \label{def:p1}\\
=&\begin{pmatrix} 1\\ \cc(\k)\end{pmatrix}\cos z 
+a\begin{pmatrix}-3h_2\dfrac{\cc^2(\k)}{\cc^2(\k)+2}\\ 
\cc(\k)\Big(\frac12-3h_2\dfrac{1}{\cc^2(\k)+2}\Big)\end{pmatrix}\notag \\
&\hspace*{68pt}+2a\begin{pmatrix}h_2\\ \cc(\k)\left(h_2-\frac12\right)\end{pmatrix}\cos 2z+O(a^2),\notag\\
\p_{2}(z):=&-\frac{1}{a}(\partial_z\u)(z) \label{def:p2} \\
=&\begin{pmatrix} 1\\ \cc(\k)\end{pmatrix}\sin z
+2a\begin{pmatrix}h_2\\ \cc(\k)\left(h_2-\frac12\right)\end{pmatrix}\sin 2z+O(a^2),\notag
\intertext{where $h_2$ is in \eqref{def:h02}, and}
\p_{3}(z):=&\frac{2}{\cc^2(\k)-1}
((\partial_{b_1}c)(\partial_{b_2}\u)-(\partial_{b_2}c)(\partial_{b_1}\u))(z)\label{def:p3}\\
=&\begin{pmatrix} 2\\-\cc(\k)\end{pmatrix}+a\begin{pmatrix}2\\ \cc(\k) \end{pmatrix}\cos z+O(a^2),\notag\\
\p_{4}(z):=&\frac13\frac{\cc^2(\k)+4}{\cc(\k)}
\Big((\partial_{b_2}\u)+\frac{\cc^2(\k)-2}{\cc^2(\k)+4}\p_{3,a}\Big)(z) \label{def:p4} \\
=&\begin{pmatrix}\cc(\k)\\2 \end{pmatrix}+a\begin{pmatrix}\cc(\k)\\ \frac12\cc^2(\k)\end{pmatrix}\cos z+O(a^2), \notag
\end{align}
\end{subequations}
are the associated eigenfunctions. Specifically,
\[
\L(0,a)\p_{k}=0 \quad \text{for $k=1,2,3,$}\quad\text{and}\quad \L(0,a)\p_{4}=\frac14a(\cc^2(\k)+1)\p_{2}.
\]
\end{lemma}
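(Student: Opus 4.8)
The plan is to exploit the symmetries and parameter structure of the periodic traveling wave family rather than attempting a direct Fredholm computation from scratch. First I would recall the general principle behind the candidate eigenfunctions in \eqref{def:p0a}: differentiating the profile equation $\f(\u(a;\k,b_1,b_2),c(a;\k,b_1,b_2);\k,b_1,b_2)=\mathbf{0}$ with respect to a parameter produces a relation in the kernel of $\partial_\u\f=\mathbf{L}_0$ evaluated along the wave. More precisely, translation invariance \eqref{E:invariance} gives $\mathbf{L}_0(\partial_z\u)=\mathbf{0}$, which after the change to the spectral variable $z=\k x$ and division by $a$ yields $\L(0,a)\p_2=0$; this is the geometric kernel direction that always persists. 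For the other two kernel elements I would differentiate $\f=\mathbf{0}$ with respect to $b_1$ and $b_2$ (holding $c$ via the bifurcation relation), obtaining $\mathbf{L}_0(\partial_{b_j}\u)+\partial_{b_j}\f+(\partial_{b_j}c)\,\partial_c\f=\mathbf{0}$; taking suitable linear combinations that cancel the inhomogeneous terms $\partial_{b_j}\f=\pm(c^2-1)(\cdot)$ and $\partial_c\f$ produces exactly the combinations $(\partial_{b_1}c)(\partial_a\u)-(\partial_a c)(\partial_{b_1}\u)$ and $(\partial_{b_1}c)(\partial_{b_2}\u)-(\partial_{b_2}c)(\partial_{b_1}\u)$ appearing in \eqref{def:p1} and \eqref{def:p3}, landing in $\ker\L(0,a)$ after normalization.

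Next I would establish that these three elements $\p_1,\p_2,\p_3$ are linearly independent for $|a|$ small, which follows by inspecting their leading ($a=0$) terms: they reduce to $\p_1\sim\binom{1}{\cc}\cos z$, $\p_2\sim\binom{1}{\cc}\sin z$, $\p_3\sim\binom{2}{-\cc}$, which are among the orthogonal functions \eqref{def:p00}, hence independent; the normalizing prefactors in \eqref{def:p1}--\eqref{def:p3} are chosen precisely to produce these clean leading coefficients, and one checks the prefactors are nonzero using $\cc^2(\k)\ne 1$ and $\cc^2(\k)+2\ne 0$. Then I would argue the geometric multiplicity is exactly three, not four: at $a=0$ the kernel of $\L(0,0)$ is four-dimensional, spanned by \eqref{def:p00}; as $a$ increases, the operator $\L(0,a)$ is a small analytic perturbation, so the total algebraic multiplicity near zero stays four, but I must show one Jordan block forms. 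This is where $\p_4$ enters: it is \emph{not} in the kernel but satisfies $\L(0,a)\p_4=\tfrac14 a(\cc^2(\k)+1)\p_2$, exhibiting a $2\times 2$ Jordan chain $\{\p_2,\p_4\}$ (with $\p_2$ the genuine eigenvector). To produce $\p_4$ I would differentiate the profile equation with respect to $a$ at the level of \eqref{def:Lx} with $\xi=0$, or equivalently use that $\partial_{b_2}\u$ solves an inhomogeneous equation whose right-hand side, after projecting out the kernel directions, is proportional to $\p_2$ — the coefficient $\tfrac14 a(\cc^2(\k)+1)$ then comes from an explicit inner-product computation using the small-amplitude expansions \eqref{E:hu-small}, \eqref{def:h02}.

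The verification that algebraic multiplicity is exactly four (no larger) and geometric multiplicity exactly three (no smaller) I would close by a dimension count: the Lyapunov--Schmidt reduction of Section~\ref{ss:LS} already shows the relevant spectral subspace near zero of $\L(0,a)$ has dimension four for $|a|$ small (it perturbs from the four-dimensional kernel of $\L(0,0)$), and exhibiting the explicit chain $\p_1,\p_2,\p_3,\p_4$ with the stated action accounts for all four dimensions, with the generalized kernel's Jordan structure being $1+1+2$. Finally I would substitute \eqref{E:hu-small}--\eqref{def:h02} into the defining formulas \eqref{def:p1}--\eqref{def:p4} and expand to order $a$ to confirm the displayed expansions; these are routine but bulky.

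\textbf{Main obstacle.} The hard part will be pinning down the algebraic-versus-geometric multiplicity bookkeeping: one must rule out both a second Jordan block (which would drop geometric multiplicity to two) and a longer chain or extra eigenvalue splitting off (which would change algebraic multiplicity). The clean way is to compute the $4\times 4$ matrix of $\L(0,a)$ restricted to the perturbed four-dimensional spectral subspace in the basis adapted to \eqref{def:p00}, show it is nilpotent (all four eigenvalues are zero — this uses the Hamiltonian-like reflection symmetry of $\L$ forcing the near-origin spectrum to stay at zero when $\xi=0$, since $\xi=0$ is the co-periodic sector where the four translation/parameter symmetries live), and read off that its Jordan form is $1\oplus 1\oplus 2$ by checking the rank of the matrix is exactly one. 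The coefficient $\tfrac14 a(\cc^2(\k)+1)$ is the single nonzero entry of that nilpotent matrix, and getting its value right is the one genuinely computational checkpoint.
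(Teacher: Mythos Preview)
Your approach is essentially the paper's: differentiate the profile equation with respect to $z$, $a$, $b_1$, $b_2$ to obtain $\L(0,a)(\partial_\bullet\u)=-(\partial_\bullet c)\,\partial_z\u$, then form Wronskian-type combinations to land in the kernel and recognize $\partial_{b_2}\u$ as the generalized eigenvector. One simplification you are missing: because $\L(0,a)=\partial_z(\cdots)$, the constant inhomogeneities $\partial_{b_j}\f$ vanish automatically under $\partial_z$ and never need to be cancelled by hand --- only the term $(\partial_\bullet c)\,\partial_z\u$ survives, which is exactly what the combinations in \eqref{def:p1} and \eqref{def:p3} eliminate; your multiplicity bookkeeping is more than the paper actually carries out (it simply exhibits the four functions and defers to the perturbation-theoretic dimension count already established before the lemma).
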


For $a=0$, note that $\p_1$, $\p_2$, $\p_3$, $\p_4$ becomes \eqref{def:p00}. Theorem~\ref{thm:existence} implies that they depend analytically on $a\in\mathbb{R}$ and $|a|$ sufficiently small. 

\begin{proof}
Exploiting variations of \eqref{E:periodic} in the $z$, and $a$, $b_1$, $b_2$ variables, the proof is similar to that of \cite[Lemma~3.1]{HJ2}, for instance. Here we include the details for the sake of completeness.

Differentiating \eqref{E:periodic} with respect to $z$ and evaluating the result at $b_1=b_2=0$, we infer from \eqref{def:L} that
\[
\mathcal{L}(0,a)(\partial_z \u)=0. 
\]
Hence zero is an eigenvalue of $\mathcal{L}(0,a)$ and $\partial_z\u$ is an associated eigenfunction. We then use \eqref{E:h-small} and \eqref{E:u-small} to find \eqref{def:p2}. By the way, this is reminiscent of that \eqref{E:periodic} remains invariant under spatial translations. 

Differentiating \eqref{E:periodic} with respect to $a$, and $b_1$, $b_2$, and evaluating at $b_1=b_2=0$, we infer from \eqref{def:L} that
\[
\mathcal{L}(0,a)(\partial_a\u)=-(\partial_a c)(\partial_z \u),
\]
and 
\[
\mathcal{L}(0,a)(\partial_{b_1}\u)=-(\partial_{b_1} c)(\partial_z \u),\qquad
\mathcal{L}(0,a)(\partial_{b_2}\u)=-(\partial_{b_2} c)(\partial_z \u).
\]
Hence
\begin{align*}
\L(0,a)((\partial_{b_1} c)(\partial_a\u)-(\partial_a c)(\partial_{b_1}\u))=0\quad\text{and}\quad
\L(0,a)((\partial_{b_1} c)(\partial_{b_2}\u)-(\partial_{b_2} c)(\partial_{b_1}\u))=0.
\end{align*}
We then use \eqref{E:hu-small} and \eqref{E:hu0} to find \eqref{def:p1} and \eqref{def:p3}. Note that $\partial_{b_2}\u$ is a generalized eigenfunction. We use \eqref{E:hu-small} and \eqref{E:hu0} to find \eqref{def:p4}. This completes the proof.
\end{proof}

To recapitulate, for $\xi>0$ and sufficiently small for $a=0$, the $L^2(\mathbb{T})\times L^2(\mathbb{T})$ spectrum of $\L(\xi,0)$ contains four purely imaginary eigenvalues $i\omega(\xi,\pm)$, $i\omega(\pm1+\xi,-)$ in the vicinity of the origin in $\mathbb{C}$, and \eqref{def:px0} spans the associated eigenspace, which depends analytically on $\xi$. For $\xi=0$ for $a\in\mathbb{R}$ and $|a|$ sufficiently small, the spectrum of $\L(0,a)$ contains four eigenvalues at the origin, and \eqref{def:p0a} makes the associated eigenfunctions, which depends analytically on $a$. 

For $\xi>0$, $a\in\mathbb{R}$ and $\xi$, $|a|$ sufficiently small, the $L^2(\mathbb{T})\times L^2(\mathbb{T})$ spectrum of $\L(\xi,a)$ contains four eigenvalues $\lambda_1(\xi,a)$, $\lambda_2(\xi,a)$, $\lambda_3(\xi,a)$, $\lambda_4(\xi,a)$ near the origin, and the associated eigenfunctions vary analytically from \eqref{def:px0} and \eqref{def:p0a}. Let (by abuse of notation)
\begin{equation}\label{def:p}
\begin{aligned}
\p_1(\xi,a)(z)=&\begin{pmatrix}1\\ \c(\k) \end{pmatrix}\cos z 
+i\xi \frac{\k\c'(\k)}{\c^2(\k)+1}\begin{pmatrix} -\c(\k)\\ 1\end{pmatrix}\sin z \\
&+a\begin{pmatrix}-3h_2\dfrac{\c^2(\k)}{\c^2(\k)+2}\\ \c(\k)\Big(\frac12-3h_2\dfrac{1}{\c^2(\k)+2}\Big)\end{pmatrix}
+2a\begin{pmatrix}h_2\\ \c(\k)\left(h_2-\frac12\right)\end{pmatrix}\cos 2z \\
&+\xi^2 \mathbf{p}_2\cos z+O(\xi^3+\xi^2a+a^2), \\
\p_2(\xi,a)(z)=&\begin{pmatrix}1\\ \c(\k) \end{pmatrix}\sin z 
-i\xi \frac{\k\c'(\k)}{\c^2(\k)+1}\begin{pmatrix} -\c(\k)\\ 1\end{pmatrix}\cos z\\
&+2a\begin{pmatrix}h_2\\ \c(\k)\left(h_2-\frac12\right)\end{pmatrix}\sin 2z
+\xi^2 \mathbf{p}_2\sin z+O(\xi^3+\xi^2a+a^2),\\
\p_3(\xi,a)(z)=&\begin{pmatrix} 2\\-\c(\k)\end{pmatrix}+a\begin{pmatrix}2\\ \c(\k)\end{pmatrix}\cos z
+\frac16\xi^2\k^2\c(\k)\begin{pmatrix} 0\\1\end{pmatrix}+O(\xi^3+\xi^2a+a^2), \\
\p_4(\xi,a)(z)=&\begin{pmatrix} \c(\k)\\2\end{pmatrix}+a\begin{pmatrix}\c(\k)\\ \frac12\c^2(\k) \end{pmatrix}\cos z
-\frac13\xi^2 \k^2\begin{pmatrix} 0\\1\end{pmatrix}+O(\xi^3+\xi^2a+a^2)
\end{aligned}
\end{equation}
as $\xi$, $a\to0$, where $h_2$ is in \eqref{def:h02} and $\mathbf{p}_2$ is in \eqref{def:p2}. For $a=0$, note that $\p_1$, $\p_2$, $\p_3$, $\p_4$ become \eqref{def:px0}. For $\xi=0$, they become \eqref{def:p0a}. Hence $\p_1$, $\p_2$, $\p_3$, $\p_4$ span the eigenspace associated with $\lambda_1$, $\lambda_2$, $\lambda_3$, $\lambda_4$ up to terms of orders $\xi^2$ and $a$ as $\xi$, $a\to 0$. 

It seems impossible to uniquely determine terms of orders $\xi a$ and higher in the eigenfunction expansion without  ad hoc orthogonality conditions. Fortuitously, it turns out that they do not contribute to the modulational instability. Hence we may neglect them in \eqref{def:p}. To compare, the eigenfunctions of the linear operator for the Whitham equation (see \eqref{E:Whitham}), which do not depend on $\xi$, extend to $a\neq0$; see \cite{HJ2}, for instance, for details. We are able to calculate terms of orders $a^2$ and higher in the eigenfunction expansion. But the index formulae become unwieldy. Hence we do not use them in the calculation in the following subsection.

\subsection{Spectral perturbation calculation}\label{ss:perturbation}

Recall that for $\xi>0$, $a\in\mathbb{R}$ and $\xi$, $|a|$ sufficiently small, the $L^2(\mathbb{T}) \times L^2(\mathbb{T})$ spectrum of $\L(\xi,a)$ contains four eigenvalues $\lambda_1(\xi,a)$, $\lambda_2(\xi,a)$, $\lambda_3(\xi,a)$, $\lambda_4(\xi,a)$ in the vicinity of the origin in $\mathbb{C}$, and \eqref{def:p} spans the associated eigenspace up to terms of orders $\xi^2$ and $a$. 
Let 
\begin{equation}\label{def:L3}
\mathbf{L}(\xi,a)=\left(\frac{\l\L(\xi,a)\p_k(\xi,a),\p_\ell(\xi,a)\r}{\l\p_k(\xi,a),\p_k(\xi,a)\r}\right)_{k,\ell=1,2,3,4}
\end{equation}
and 
\begin{equation}\label{def:I3}
\mathbf{I}(\xi,a)=\left(\frac{\l\p_k(\xi,a),\p_\ell(\xi,a)\r}{\l\p_k(\xi,a),\p_k(\xi,a)\r}\right)_{k,\ell=1,2,3,4},
\end{equation}
where $\p_1$, $\p_2$, $\p_3$, $\p_4$ are in \eqref{def:p}. Throughout the subsection, $\langle\,,\rangle$ means the $L^2(\mathbb{T})\times L^2(\mathbb{T})$ inner product. Note that $\mathbf{L}$ represents the action of $\L(\xi,a)$ on the eigenspace associated with $\lambda_1$, $\lambda_2$, $\lambda_3$, $\lambda_4$, up to the orders of $\xi^2$ and $a$ as $\xi$, $a\to0$, after normalization, and $\mathbf{I}$ is the projection of the identity onto the eigenspace. 
It follows from perturbation theory (see \cite[Section~4.3.5]{K}, for instance, for details) that for $\xi>0$, $a\in\mathbb{R}$ and $\xi$, $|a|$ sufficiently small, the roots of $\det(\mathbf{L}-\lambda\mathbf{I})$ coincide with the eigenvalues of $\L(\xi,a)$ up to terms of orders $\xi^2$ and $a$. 

For any $a\in\mathbb{R}$ and $|a|$ sufficiently small, we make a Baker-Campbell-Hausdorff expansion to write 
\[
\L(\xi,a)=\L_0+i\xi\L_1-\frac12\xi^2\L_2+O(\xi^3)
\]
as $\xi\to0$, where
\begin{align*}
\L_0=&\L(0,a)=\partial_z\begin{pmatrix} \c(\k) & -1\\ -\c^2(\k|\partial_z|) & \c(\k)\end{pmatrix}
-a\partial_z\begin{pmatrix} \c(\k) & 1\\ 0 & \c(\k)\end{pmatrix}\cos z+O(a^2),\\
\L_1=&[\L_0,z]=\begin{pmatrix} \c(\k) & -1\\ -[\partial_z\c^2(\k|\partial_z|),z] & \c(\k)\end{pmatrix}
-a\begin{pmatrix} \c(\k) & 1\\ 0 & \c(\k)\end{pmatrix}\cos z+O(a^2), \\ 
\L_2=&[\L_1,z]=\begin{pmatrix} 0 & 0\\ -[[\partial_z\c^2(\k|\partial_z|),z],z] & 0\end{pmatrix}+O(a^2)
\end{align*}
as $a\to0$, and $[\cdot,\,\cdot]$ means the commutator. The latter equalities follow from \eqref{def:L}, \eqref{def:Lx}, \eqref{E:hu-small} and that $\L(\xi,a)$ depends analytically on $\xi$ near $\xi=0$. We merely pause to remark that $\L_1$ and $\L_2$ are well defined in the periodic setting even though $z$ is not. Indeed, $[\partial_z\c^2(\k|\partial_z|),z]=\c^2(\k|\partial_z|)+[\c^2(\k|\partial_z|),z]\partial_z$ and
\[
[\c^2(\k|\partial_z|),z]e^{inz}=i e^{inz}\sum_{m\neq 0} \frac{(-1)^{|m|+1}}{m}
(-\c^2(\k n)-\c^2(\k (n+m)))e^{imz} \quad \text{for $n\in \mathbb{Z}$}
\]
by brutal force.
One may likewise represent $[[\partial_z\c^2(\k|\partial_z|),z],z]$ in the Fourier series. Unfortunately, this is not convenient for an explicit calculation. We instead rearrange the above as 
\begin{align}\label{E:Lexp}
\mathcal{L}(\xi,a)
=&\mathcal{L}(0,0)+i\xi [\mathcal{L}(0,0),z]-\frac{\xi^2}{2}[[\mathcal{L}(0,0),z],z]\hspace*{-20pt}\notag\\
&-a\partial_z\begin{pmatrix} \c(\k) & 1 \\ 0 & \c(\k)\end{pmatrix}\cos z
-i\xi a\begin{pmatrix} \c(\k) & 1 \\ 0 & \c(\k) \end{pmatrix}\cos z+O(\xi^3+\xi^2 a+a^2)\hspace*{-20pt}\notag\\
=&:\mathcal{M}-a\partial_z\begin{pmatrix} c(\k)&1\\0&c(\k)\end{pmatrix}\cos z
-i\xi a\begin{pmatrix} \c(\k) & 1 \\ 0 & \c(\k)\end{pmatrix}\cos z+O(\xi^3+\xi^2 a+a^2)\hspace*{-20pt}
\end{align}
as $\xi$, $a\to 0$, and note that $\mathcal{M}$ agrees with $\L(\xi,0)$ up to terms of order $\xi^2$ for $\xi>0$ and sufficiently small. We then resort to \eqref{def:ck} and make an explicit calculation to find
\begin{align*}
\L(\xi,0)\begin{pmatrix} \zeta \\ v \end{pmatrix}e^{inz} 
=&in\begin{pmatrix} \c(\k)\zeta -v \\ -\c^2(\k n)\zeta+\c(\k)v \end{pmatrix}e^{inz}\\
&+i\xi\begin{pmatrix} \c(\k)\zeta-v \\-\left(\c^2(\k n)+\k n(\c^2)'(\k n)\right)\zeta+ \c(\k)v \end{pmatrix}e^{inz}\\
&-i\xi^2\,\k n\Big((\c^2)'(\k n)+\frac12\k n(\c^2)''(\k n)\Big)\zeta\begin{pmatrix}0\\1 \end{pmatrix}e^{inz}+O(\xi^3) \\
=&\mathcal{M}\begin{pmatrix} \zeta \\ v \end{pmatrix}e^{inz} +O(\xi^3)
\end{align*}
as $\xi\to 0$, for any constants $\zeta$, $v$ and $n\in\mathbb{Z}$. For instance, since $\c^2(0)=1$ and $(\c^2)'(0)=0$, it follows that 
\[
\M\begin{pmatrix} \zeta \\ v \end{pmatrix}
=i\xi\begin{pmatrix}\c(\k)\zeta-v \\ \c(\k)v-\zeta \end{pmatrix}.
\]
One may likewise calculate $\M\begin{pmatrix}\zeta \\ v\end{pmatrix}\left\{\begin{matrix} \cos nz\\ \sin nz\end{matrix}\right\}$ explicitly up to the order of $\xi^2$. We omit the details. 

We use \eqref{E:Lexp}, \eqref{def:p}, and the above formula for $\M$, and we make a lengthy and complicated, but explicit, calculation to show that
\begin{align*}
\L\p_1=&-2i\xi\,\k(\c\c')(\k)\begin{pmatrix}0\\1 \end{pmatrix}\cos z
+i\xi\,\k\c'(\k)\begin{pmatrix} -1\\ \c(\k)\end{pmatrix}\cos z \\
&-\frac12i\xi a\,\c(\k)\begin{pmatrix} 2\\ \c(\k)\end{pmatrix} (\cos 2z+1)
+i\xi a\,\frac{\k\c'(\k)}{\c^2(\k)+1}\begin{pmatrix}\c^2(\k)-1\\ -\c(\k) \end{pmatrix} \cos 2z \\
&-\frac12i\xi a\,\c(\k)\begin{pmatrix} 6h_2\dfrac{\c^2(\k)-1}{\c^2(\k)+2}+1\\ -\c(\k) \end{pmatrix} \\
&+\frac12 i\xi\,a\c(\k)\begin{pmatrix}2\\ \c(\k)\Big(1-12\k\dfrac{(\c\c')(2\k)}{\c^2(\k)-\c^2(2\k)}\Big)\end{pmatrix}\cos 2z\\
&+\xi^2\,\k(2(\c\c')(\k)+\k((\c')^2+\c\c'')(\k))\begin{pmatrix}0\\1 \end{pmatrix} \sin z\\
&-\xi^2\,\k\c(\k)\begin{pmatrix}-1\\ \c(\k)\Big(1+2\k\dfrac{(\c\c')(\k)}{\c^2(\k)+1}\Big) \end{pmatrix}\sin z\\
&+\frac12\xi^2\,\k^2\Big(2\frac{(\c(\c')^2)(\k)}{\c^2(\k)+1}-\c''(\k)\Big)\begin{pmatrix}-1\\ \c(\k) \end{pmatrix}\sin z
+O(\xi^3+\xi^2 a+a^2)
\end{align*}
as $\xi$, $a\to 0$, where $h_2$ is in \eqref{def:h02}. Moreover, 
\begin{align*}
\L\p_2=&-2i\xi\,\k (\c\c')(\k)\begin{pmatrix}0\\1 \end{pmatrix}\sin z
+i\xi\,\k\c'(\k)\begin{pmatrix} -1\\ \c(\k)\end{pmatrix}\sin z \\
&-\frac12i\xi a\,\c(\k)\begin{pmatrix} 2\\ \c(\k)\end{pmatrix} \sin 2z
+i\xi a\,\frac{ \k\c'(\k)}{\c^2(\k)+1}\begin{pmatrix}\c^2(\k)-1\\-\c(\k) \end{pmatrix} \sin 2z \\
&+\frac12 i\xi a\,\c(\k)\begin{pmatrix}2\\ \c(\k)\Big(1-12\k\dfrac{(\c\c')(2\k)}{\c^2(\k)-\c^2(2\k)}\Big)\end{pmatrix}\sin 2z\\
&-\xi^2\,\k(2\c\c'+\k((\c')^2+\c\c''))(\k)\begin{pmatrix}0\\1 \end{pmatrix} \cos z\\
&+\xi^2\,\k\c(\k)\begin{pmatrix}-1\\ \c(\k)\Big(1+2\k\dfrac{(\c\c')(\k)}{\c^2(\k)+1}\Big) \end{pmatrix}\cos z\\
&-\frac12\xi^2\,\k^2\Big(2\frac{(\c(\c')^2)(\k)}{\c^2(\k)+1}-\c''(\k)\Big)\begin{pmatrix}-1\\ \c(\k) \end{pmatrix}\cos z
+O(\xi^3+\xi^2 a+a^2),
\intertext{and}
\L\p_3=&i\xi\begin{pmatrix} 3\c(\k)\\-\c^2(\k)-2\end{pmatrix}
-2i\xi a\,\k(\c\c')(\k)\begin{pmatrix} 0\\1  \end{pmatrix}\cos z+O(\xi^3+\xi^2 a+a^2), \\
\L\p_4=&i\xi\begin{pmatrix} \c^2(\k)-2\\ \c(\k) \end{pmatrix}
+\frac12 a(\c^2(\k)+4) \begin{pmatrix} 1\\ \c(\k) \end{pmatrix} \sin z\\
&-\frac12 i\xi a\begin{pmatrix} \c^2(\k)+4\\ \c(\k)(\c^2(\k)+4+2\k(\c\c')(\k))\end{pmatrix}\cos z +O(\xi^3+\xi^2 a+a^2) 
\end{align*}
as $\xi$, $a\to 0$. 

To proceed, we take the $L^2(\mathbb{T})\times L^2(\mathbb{T})$ inner products of the above and \eqref{def:p}, and we make a lengthy and complicated, but explicit, calculation to show that 
\begin{align*}
\langle \mathcal{L}\p_1,\p_1\rangle=&\langle \mathcal{L}\p_2,\p_2\rangle \\
=&-\frac{1}{2} i\xi\,\k\c'(\k)(\c^2(\k)+1)+O(\xi^3+\xi^2 a+a^2),\\
\langle \mathcal{L}\p_1,\p_2\rangle =&-\langle \mathcal{L}\p_2,\p_1\rangle \\
=&\frac12\xi^2\Big(\k\c'(\k)+\frac12\k^2\c''(\k)\Big)(\c^2(\k)+1) +O(\xi^3+\xi^2 a+a^2),\\
\langle \mathcal{L}\p_1,\p_3\rangle=&\frac{2}{\c(\k)}\langle \mathcal{L}\p_1,\p_4\rangle\\
=&-3i\xi a\Big(2h_2c(\k)\frac{\c^2(\k)-1}{\c^2(\k)+2}+\c(\k)+\frac16\k\c'(\k)(\c^2(\k)+2) \Big)\\ 
&+O(\xi^3+\xi^2 a+a^2)
\intertext{as $\xi$, $a\to 0$, where $h_2$ is in \eqref{def:h02}. Moreover,}
\langle \mathcal{L}\p_2,\p_3\rangle=&\langle \mathcal{L}\p_2,\p_4\rangle
=0 +O(\xi^3+\xi^2 a+a^2),\\
\langle \mathcal{L}\p_3,\p_1\rangle
=&-i\xi a\,\c(\k)\Big(6h_2\frac{\c^2(\k)+1}{\c^2(\k)+2}+\frac12(\c^2(\k)+2)+2\k(\c\c')(\k) \Big)\\
&+O(\xi^3+\xi^2 a+a^2),  \\
\langle \mathcal{L}\p_3,\p_2\rangle=& 0+O(\xi^3+\xi^2 a+a^2),  \\
\langle \mathcal{L}\p_3,\p_3\rangle=& i\xi\c(\k)(\c^2(\k)+8)+O(\xi^3+\xi^2 a+a^2),\\
\langle \mathcal{L}\p_3,\p_4\rangle=&\langle \mathcal{L}\p_4,\p_3\rangle
=i\xi(\c^2(\k)-4)+O(\xi^3+\xi^2 a+a^2) ,
\intertext{and}
\langle \mathcal{L}\p_4,\p_1\rangle
=&-i\xi a\Big(\frac14(\c^4+3\c^2)(\k)+1+3h_2\c^2(\k)\frac{\c^2(\k)-1}{\c^2(\k)+2}+\frac12\k(\c^3\c')(\k)\Big)\\
& +O(\xi^3+\xi^2 a+a^2),  \\
\langle \mathcal{L}\p_4,\p_2\rangle=&\frac14a(\c^2(\k)+4)(\c^2(\k)+1) +O(\xi^3+\xi^2 a+a^2),\\   
\langle \mathcal{L}\p_4,\p_4\rangle=& i\xi \c^3(\k)+O(\xi^3+\xi^2 a+a^2)
\end{align*}
as $\xi$, $a\to 0$, where $h_2$ is in \eqref{def:h02}.

Continuing, we take the $L^2(\mathbb{T})\times L^2(\mathbb{T})$ inner products of \eqref{def:p} and we make an explicit calculation to show that
\begin{align*}
\langle\p_1,\p_1\rangle=&\langle \p_2,\p_2 \rangle
=\frac12(\c^2(\k)+1)-\frac34\xi^2\k^2\frac{\c'(\k)^2}{\c^2(\k)+1}+O(\xi^3+\xi^2 a+a^2),\\
\langle\p_1,\p_2\rangle=&\langle\p_2,\p_1\rangle= 0+O(\xi^3+\xi^2 a+a^2),\\
\langle\p_1,\p_3\rangle=&\langle\p_3,\p_1\rangle
=a\Big(1-3h_2\frac{\c^2(\k)}{\c^2(\k)+2}\Big)+O(\xi^3+\xi^2 a+a^2),\\
\langle\p_1,\p_4\rangle=&\langle\p_4,\p_1\rangle
=\frac14a\,\c(\k)(\c^2(\k)+6-12h_2)+O(\xi^3+\xi^2 a+a^2)
\intertext{as $\xi$, $a\to 0$, where $h_2$ is in \eqref{def:h02}. Moreover,}
\langle \p_2,\p_3 \rangle=&\langle \p_3,\p_2 \rangle= 2\langle \p_2,\p_4 \rangle=2\langle \p_4,\p_2 \rangle
=\frac12 i\xi a\,\frac{\k(\c\c')(\k)}{\c^2(\k)+1}+O(\xi^3+\xi^2 a+a^2),  \\
\langle \p_3,\p_3 \rangle=&\langle \p_4,\p_4 \rangle= \c^2(\k)+4+O(\xi^3+\xi^2 a+a^2),\\
\langle \p_3,\p_4 \rangle=&\langle \p_4,\p_3 \rangle=0 +O(\xi^3+\xi^2 a+a^2) 
\end{align*}
as $\xi$, $a\to 0$. 

Together, \eqref{def:L3} becomes
\begin{equation}\label{E:L3}
\begin{split}
\mathbf{L}(\xi,a)=
&\frac14 a(\c^2(\k)+1)\begin{pmatrix} 0&0&0&0\\0&0&0&0\\0&0&0&0\\0&1&0&0\end{pmatrix}\\
&+i\xi\begin{pmatrix} -\k\c'(\k)&0&0&0\\ 0&-\k\c'(\k)&0&0\\ 
0&0&\c(\k)\dfrac{\c^2(\k)+8}{\c^2(\k)+4}&\dfrac{\c^2(\k)-4}{\c^2(\k)+4}\\
0&0&\dfrac{\c^2(\k)-4}{\c^2(\k)+4}&\dfrac{\c^3(\k)}{\c^2(\k)+4}\end{pmatrix} \\
&+i\xi a\,L\begin{pmatrix} 0&0&2&\c(\k)\\ 0&0&0&0\\0&0&0&0\\0&0&0&0 \end{pmatrix} 
-i\xi a\frac{1}{\c^2(\k)+4} \begin{pmatrix}0&0&0&0\\0&0&0&0\\
L_{31}&0&0&0\\
L_{41}&0&0&0\end{pmatrix}\\
&+\frac12\xi^2\k(2\c'(\k)+\k\c''(\k))\begin{pmatrix}0&1&0&0\\ -1&0&0&0\\ 0&0&0&0\\0&0&0&0 \end{pmatrix}
+O(\xi^3+\xi^2 a+a^2)
\end{split}
\end{equation}
as $\xi$, $a\to0$, where 
\begin{align*}
L=&-\frac{3\c(\k)}{\c^2(\k)+1}\left(2h_2\frac{\c^2(\k)-1}{\c^2(\k)+2}+1\right)
-\frac12\k\c'(\k)\frac{\c^2(\k)+2}{\c^2(\k)+1}, \\
L_{31}=&\c(\k)\left(6h_2\frac{\c^2(\k)+1}{\c^2(\k)+2}+\frac12(\c^2(\k)+2)+2\k(\c\c')(\k)\right),\\
L_{41}=&\frac14(\c^4(\k)+3\c^2(\k)+4)+3h_2\c^2(\k)\frac{\c^2(\k)-1}{\c^2(\k)+2}+\frac12\k(\c^3\c')(\k),
\end{align*}
and $h_2$ is in \eqref{def:h02}. Moreover, \eqref{def:I3} becomes
\begin{equation}\label{E:I3}
\begin{split}
\mathbf{I}(\xi,a) =&\mathbf{I}+a\frac{2}{\c^2(\k)+1}
\begin{pmatrix}0&0& 1-3h_2\dfrac{\c^2(\k)}{\c^2(\k)+2} & \c(\k)\left(\frac14\c^2(\k)+\frac32-3h_2\right)\\
0&0&0&0\\0&0&0&0\\0&0&0&0 \end{pmatrix} \\
&+a\frac{1}{\c^2(\k)+4}\begin{pmatrix}0&0&0&0\\0&0&0&0\\
1-3h_2\dfrac{\c^2(\k)}{\c^2(\k)+2}&0&0&0\\
\c(\k)\left(\frac14\c^2(\k)+\frac32-3h_2\right)&0&0&0 
\end{pmatrix} \\
&-\frac12i\xi a\frac{\k(\c\c')(\k)}{(\c^2(\k)+1)^2(\c^2(\k)+4)}\begin{pmatrix}0&0&0&0\\
0&0&2(\c^2(\k)+4)&\c^2(\k)+4\\
0&\c^2(\k)+1&0&0\\
0&\c^2(\k)+1&0&0\end{pmatrix}\\
&+O(\xi^3+\xi^2 a+a^2) 
\end{split}
\end{equation}
as $\xi$, $a\to 0$, where $\mathbf{I}$ means the $4\times4$ identity matrix. Note that the coefficient matrices are explicit functions of $\k$.

For $a=0$, \eqref{E:L3} and \eqref{E:I3} become
\begin{align*}
\mathbf{L}(\xi,0)=&i\xi\begin{pmatrix} -\k\c'(\k)&0&0&0\\ 0&-\k\c'(\k)&0&0\\ 
0&0&\c(\k)\dfrac{\c^2(\k)+8}{\c^2(\k)+4}&\dfrac{\c^2(\k)-4}{\c^2(\k)+4}\\
0&0&\dfrac{\c^2(\k)-4}{\c^2(\k)+4}&\c(\k)\dfrac{\c^2(\k)}{\c^2(\k)+4}\end{pmatrix} \\
&+\frac12\xi^2\k(2\c'(\k)+\k\c''(\k))\begin{pmatrix}0&1&0&0\\ -1&0&0&0\\ 0&0&0&0\\0&0&0&0 \end{pmatrix}+O(\xi^3)
\end{align*}
and $\mathbf{I}(\xi,0)=\mathbf{I}$ as $\xi\to0$. It is then easy to verify that the roots of $\det(\mathbf{L}-\lambda\mathbf{I})(\xi,0)$ coincide with the eigenvalues $i\omega(\pm1+\xi,-)$ and $i\omega(\xi,\pm)$ of $\L(\xi,0)$ up to terms of order $\xi^2$ for $\xi>0$ and sufficiently small. For $\xi=0$, \eqref{E:L3} and \eqref{E:I3} become
\[
\mathbf{L}(0,a)=\frac14 a(\c^2(\k)+1)\begin{pmatrix} 0&0&0&0\\0&0&0&0\\0&0&0&0\\0&1&0&0\end{pmatrix}+O(a^2)
\]
and $\mathbf{I}(0,a)=\mathbf{I}+O(a)$ for $a\to0$. This is reminiscent of the Jordan block structure of $\L(0,a)$; see Lemma~\ref{lem:L}.

\subsection{Modulational instability index}\label{ss:index}

We turn the attention to the roots of
\begin{equation}\label{def:P}
\begin{split}
\det(\mathbf{L}-&\lambda\mathbf{I})(\xi)(a;\k,0,0) \\
=&p_4(\xi,a;\k)\lambda^4+ip_3(\xi,a;\k)\lambda^3+p_2(\xi,a;\k)\lambda^2+ip_1(\xi,a;\k)\lambda+p_0(\xi,a;\k) \\
=:&p(\lambda)(\xi,a;\k)
\end{split}
\end{equation}
for $\xi>0$, $a\in\mathbb{R}$ and $\xi$, $|a|$ sufficiently small for $\k>0$, where $\mathbf{L}$ and $\mathbf{I}$ are in \eqref{E:L3} and \eqref{E:I3}. Recall that they coincide with the $L^2(\mathbb{T})\times L^2(\mathbb{T})$ eigenvalues of $\L(\xi)(a;\k,0,0)$ in the vicinity of the origin in $\mathbb{C}$ up to terms of orders $\xi^2$ and $a$ as $\xi$, $a\to0$.  

Note that $p_0$, $p_1$, $\dots$, $p_4$ depend analytically on $\xi$, $a$, and $\k$ for any $\xi>0$ and $|a|$ sufficiently small for any $\k>0$. Recall that the spectrum of $\L(\xi,a)$ is symmetric with respect to the reflection in the imaginary axis for any $\xi\in[0,1/2]$ and $a\in\mathbb{R}$ admissible for any $\k>0$. Hence $p_0$, $p_1$, $\dots$, $p_4$ are real valued. Recall that 
\[
\text{spec}\,\L(\xi,a)^*=\text{spec}\,\L(-\xi,a).
\] 
Hence $p_1$ and $p_3$ are even in $\xi$, whereas $p_0$, $p_2$, $p_4$ are odd. Moreover, the spectrum of $\L(\xi,a)$ remains invariant under $a\mapsto -a$ by \eqref{E:invariance} for any $\xi\in[0,1/2]$ and $a\in\mathbb{R}$ admissible for any $\k>0$. Hence $p_0$, $p_1$, $\dots$, $p_4$ are even in $a$. 

For $\xi=0$, Lemma~\ref{lem:L} implies that $\lambda=0$ is a root of $p(0,a;\k)$ with multiplicity four for any $a\in\mathbb{R}$ and $|a|$ sufficiently small for any $\k>0$. Likewise, $\xi=0$ is a root of $p(\cdot\,,a;\k)(0)$ with multiplicity four. 
Thus we may define
\[
q(-i\xi \lambda)(\xi,a;\k)=
\xi^4 (q_4(\xi,a;\k)\lambda^4-q_3(\xi,a;\k)\lambda^3-q_2(\xi,a;\k)\lambda^2+q_1(\xi,a;\k)\lambda+q_0(\xi,a;\k)),
\]
where 
\begin{equation}\label{def:qk}
p_k(\xi,a;\k):=\xi^{4-k}q_k(\xi,a;\k)\qquad \text{for $k=0$, $1$, $\dots$, $4$}.
\end{equation} 
Note that $q_0$, $q_1$, $\dots$, $q_4$ are real valued and depend analytically on $\xi$, $a$, and $\k$ for any $\xi>0$ and $|a|$ sufficiently small for any $\k>0$. Moreover, they are odd in $\xi$ and even in $a$. 
For $a\in\mathbb{R}$ and $|a|$ sufficiently small for $\k>0$, by virtue of Section~\ref{ss:MI}, a sufficiently small, periodic wave train $\eta(a;\k,0,0)$, $u(a;\k,0,0)$ and $c(a;\k,0,0)$ of \eqref{E:main}-\eqref{def:c} is modulationally unstable, provided that $q$ possesses a pair of complex roots for $\xi>0$ and small. 

Let 
\begin{align*}
\Delta_0=
&256q_4^3q_0^3-192q_4^2q_3q_1q_0^2-128q_4^2q_2^2q_0^2+144q_4^2q_2q_1^2q_0 \\
&-27q_4^2q_1^4+144q_4q_3^2q_2q_0^2-6q_4q_3^2q_1^2q_0-80q_4q_3q_2^2q_1q_0\\
&+18q_4q_3q_2q_1^3+16q_4q_2^4q_0-4q_4q_2^3q_1^2-27q_3^4q_0^2+18q_3^3q_2q_1q_0\\
&-4q_3^3q_1^3-4q_3^2q_2^3q_0+q_3^2q_2^2q_1^2,
\intertext{and}
\Delta_1=&-8q_4q_2-3q_3^2,\\
\Delta_2=&64q_4^3q_0-16q_4^2q_2^2-16q_4q_3^2q_2+16q_4^2q_3q_1-3q_3^4.
\end{align*}
They classify the nature of the roots of the quartic polynomial $q$. Specifically, if $\Delta_0<0$ then the roots of $q$ are distinct, two real and two complex. If $\Delta_0>0$ and $\Delta_1\geq 0$ then the roots are distinct and complex. If $\Delta_0>0$ and if $\Delta_1<0$, $\Delta_2>0$ then the roots of $q$ are distinct and complex. If $\Delta_0>0$ and if $\Delta_1<0$, $\Delta_2<0$, on the other hand, then the roots are distinct and real. If $\Delta_0=0$ then at least two roots are equal; see \cite{HP1}, for instance, for a complete proof. Note that $\Delta_0$ is the discriminant of $q$. 

Note that $\Delta_0$, $\Delta_1$, $\Delta_2$ are even in $\xi$ and $a$. We may write
\begin{align*}
\Delta_0(\xi,a;\k)=:&\Delta_0(\xi,0;\k)+a^2\Delta(\k)+O(a^2(\xi^2+a^2)),
\intertext{and}
\Delta_1(\xi,a;\k)=&\Delta_1(\xi,0;\k)+O(a^2), \\
\Delta_2(\xi,a;\k)=&\Delta_2(\xi,0;\k)+O(a^2)
\end{align*}
as $a\to0$ for any $\xi>0$ and sufficiently small for any $\k>0$. We then use \eqref{E:L3}, \eqref{E:I3}, \eqref{def:P}, \eqref{def:qk}, and we make a Mathematica calculation to show that
\begin{align*}
\Delta_0(\xi,0;\k)=&4\xi^2\k^2(((\k\c(\k))')^2-1)^4((\k\c(\k))'')^2+O(\xi^4)>0,
\intertext{and}
\Delta_1(\xi,0;\k)=&-4(2+(\c(\k)+\k\c'(\k))^2)+O(\xi^2)<0, \\
\Delta_2(\xi,0;\k)=&-16(1+2(\c(\k)+\k\c'(\k))^2)+O(\xi^2)<0 
\end{align*} 
as $\xi\to 0$ for any $\k>0$. Therefore, for $a\in\mathbb{R}$, $|a|$ sufficiently small and fixed, if $\Delta(\k)<0$ for some $\k>0$ then it is possible to find a sufficiently small $\xi_0>0$ such that $\Delta_0(\xi,a;\k)<0$ and $\Delta_1$, $\Delta_2<0$ for $\xi\in(0,\xi_0)$. Hence $q$ possesses two real and two complex roots for $\xi\in(0,\xi_0)$, implying the modulational instability. We pause to remark that one must take $\xi$ small enough so that $a^2\Delta(\k)$ dominates $\Delta_0(\xi,0;\k)=O(\xi^2)$. That means, the modulational instability is a nonlinear phenomenon. If $\Delta\geq 0$, on the other hand, then $\Delta_0>0$ and $\Delta_1$, $\Delta_2<0$ for $\xi>0$ sufficiently small. Hence the roots of $q$ are real for $\xi>0$ sufficiently small. Recall from Section~\ref{ss:a} the spectral stability in the vicinity of the origin in $\mathbb{C}$ for $\xi$ away from zero. Hence this implies the spectral stability in the vicinity of the origin in $\mathbb{C}$.

We use \eqref{E:L3}, \eqref{E:I3}, \eqref{def:P}, \eqref{def:qk}, and we make a Mathematica calculation to find $\Delta$ explicitly, whereby we derive a modulational instability index for \eqref{E:main}-\eqref{def:c}. We summarize the conclusion.

\begin{theorem}[Modulational instability index] \label{thm:index}
A sufficiently small, $2\pi/\k$-periodic wave train of \eqref{E:main}-\eqref{def:c} is modulationally unstable, provided that
\begin{equation} \label{def:ind}
\Delta(\k):=\frac{i_1(\k)i_2(\k)}{i_3(\k)}i_4(\k)<0,
\end{equation}
where
\begin{subequations}
\begin{align}
i_1(\k)=&(\k\cc(\k))'', \label{def:i1} \\
i_2(\k)=&((\k\cc(\k))')^2-1, \label{def:i2} \\
i_3(\k)=&\cc^2(\k)-\cc^2(2\k), \label{def:i3}
\intertext{and}
i_4(\k)=&3\cc^2(\k)+5\cc^4(\k)-2\cc^2(2\k)(\cc^2(\k)+2) \label{def:i4} \\
&+18\k\cc^3(\k)\cc'(\k)+\k^2(\cc')^2(\k)(5\cc^2(\k)+4\cc^2(2\k)).\hspace*{-20pt}\notag
\end{align}
\end{subequations}
It is spectrally stable to square integrable perturbations in the vicinity of the origin in $\mathbb{C}$ otherwise.
\end{theorem}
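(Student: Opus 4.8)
The plan is to read off modulational (in)stability from the roots of $p(\lambda)(\xi,a;\k)=\det(\mathbf{L}-\lambda\mathbf{I})(\xi,a;\k)$ in \eqref{def:P}, which by Kato's reduction (\cite[Section~4.3.5]{K}) agree with the four $L^2(\mathbb{T})\times L^2(\mathbb{T})$ eigenvalues of $\L(\xi,a)$ near the origin in $\mathbb{C}$, up to terms of orders $\xi^2$ and $a$. First I would record the algebraic structure of $p$: the symmetries $\lambda\mapsto\lambda^*$ under $\xi\mapsto-\xi$, $\lambda\mapsto-\lambda$ under $z\mapsto-z$, and $a\mapsto-a$ from \eqref{E:invariance} make $p_0,\dots,p_4$ real, with $p_1,p_3$ even and $p_0,p_2,p_4$ odd in $\xi$, all even in $a$; and Lemma~\ref{lem:L} (zero has algebraic multiplicity four at $\xi=0$), together with the symmetric fact that $\xi=0$ is a fourfold root, forces the factorization $p_k=\xi^{4-k}q_k$ of \eqref{def:qk}. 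Thus $q$ is a genuine quartic in a rescaled spectral variable, with real analytic coefficients $q_0,\dots,q_4$ that are odd in $\xi$ and even in $a$, and modulational instability is precisely the occurrence of a complex-conjugate pair of roots of $q$ for $\xi>0$ small.

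Second, I would invoke the discriminant dictionary for real quartics in terms of $\Delta_0$ (the discriminant), $\Delta_1$, $\Delta_2$ (recalled with proof in \cite{HP1}). At $a=0$, the perturbed rest state, a direct computer-algebra evaluation from \eqref{E:L3} and \eqref{E:I3} gives, to leading order in $\xi$,
\[
\Delta_0(\xi,0;\k)=4\xi^2\k^2\big(((\k\c(\k))')^2-1\big)^4((\k\c(\k))'')^2+O(\xi^4)>0,\qquad \Delta_1(\xi,0;\k)<0,\qquad \Delta_2(\xi,0;\k)<0,
\]
so the roots of $q$ are all real at $a=0$, consistent with the spectral stability of the rest state recorded in \eqref{E:Lx0=0}. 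Since $\Delta_1,\Delta_2$ stay negative for $|a|$ small while $\Delta_0$ is only $O(\xi^2)$-small, the sign of $\Delta_0$ --- hence stability versus instability --- is governed by the leading $a$-correction, which by parity has the form
\[
\Delta_0(\xi,a;\k)=\Delta_0(\xi,0;\k)+a^2\,\Delta(\k)+O\big(a^2(\xi^2+a^2)\big).
\]

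Third, this closes the argument. If $\Delta(\k)<0$, fix $|a|$ small and choose $\xi_0>0$ small enough that $a^2|\Delta(\k)|$ dominates $\Delta_0(\xi,0;\k)=O(\xi^2)$ on $(0,\xi_0)$; then $\Delta_0<0$ and $\Delta_1,\Delta_2<0$ there, so $q$ has two real and two complex roots --- modulational instability, the requirement $\xi\ll|a|$ exhibiting it as a nonlinear effect. If $\Delta(\k)\geq0$, then $\Delta_0\geq0$ and $\Delta_1,\Delta_2<0$ for $\xi>0$ small, so the roots of $q$ are real; combined with the spectral stability for $\xi$ bounded away from $0$ from Section~\ref{ss:a}, this gives spectral stability in a full neighborhood of the origin. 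The borderline $\Delta(\k)=0$, where also $i_1 i_2 i_4=0$, sits on the stability boundary; I would treat it within the ``otherwise'' case and leave a sharper analysis of the curves $i_1=0$, $i_2=0$, $i_4=0$ to higher order.

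Finally, it remains to compute $\Delta(\k)$ --- the coefficient of $a^2$ in $\Delta_0$ at leading order in $\xi$ --- explicitly. I would assemble $\det(\mathbf{L}-\lambda\mathbf{I})$ from the explicit entries of \eqref{E:L3} and \eqref{E:I3}, extract the $q_k$ via \eqref{def:qk}, substitute $h_0$ and $h_2$ from \eqref{def:h02}, and feed the result to a computer algebra system. The raw output is unwieldy; the real work is to recognize that it collapses to $i_1(\k)i_2(\k)i_4(\k)/i_3(\k)$ with $i_1,\dots,i_4$ as in \eqref{def:i1}--\eqref{def:i4}, the sole denominator $i_3(\k)=\c^2(\k)-\c^2(2\k)$ being inherited from $h_2$, and to note $i_3(\k)\neq0$ for every $\k>0$ (true because $\c$ is strictly decreasing, so there is no $2{:}1$ resonance for pure gravity waves), so that $\Delta(\k)$ is finite and $\operatorname{sgn}\Delta(\k)=\operatorname{sgn}(i_1 i_2 i_4)(\k)$. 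The main obstacle is precisely this symbolic reduction, together with the --- routine but not entirely trivial --- verification that the $O(\xi^3+\xi^2 a+a^2)$ errors carried by $\mathbf{L}$ and $\mathbf{I}$ do not reach the coefficient $\Delta(\k)$, which one confirms by tracking the exact scaling $p_k=\xi^{4-k}q_k$ forced by Lemma~\ref{lem:L} and the symmetries through the discriminant. Everything else --- the parity constraints, the quartic dictionary, and the comparison of $\xi^2$ against $a^2$ --- is bookkeeping; incidentally, the emergence of $i_1=(\k\c(\k))''$ and $i_2=((\k\c(\k))')^2-1$ matches the Whitham-modulation expectation that instability is governed by the curvature of the dispersion relation $\k\c(\k)$ and by an extremum of its group velocity.
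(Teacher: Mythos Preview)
Your proposal is correct and follows essentially the same route as the paper: reduce the four near-origin eigenvalues to the roots of the quartic $p=\det(\mathbf{L}-\lambda\mathbf{I})$, exploit the symmetries and Lemma~\ref{lem:L} to factor $p_k=\xi^{4-k}q_k$, classify the roots via the discriminant triple $(\Delta_0,\Delta_1,\Delta_2)$, show $\Delta_1,\Delta_2<0$ and $\Delta_0=O(\xi^2)>0$ at $a=0$, and then let the sign of the $a^2$-coefficient $\Delta(\k)$ of $\Delta_0$ decide, with the explicit form $\Delta=i_1i_2i_4/i_3$ extracted by computer algebra from \eqref{E:L3}--\eqref{E:I3} and \eqref{def:h02}. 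The paper proceeds identically, including the Mathematica computation and the observation that the instability window requires $\xi$ small relative to $|a|$.
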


Theorem~\ref{thm:index} elucidates four resonance mechanisms which contribute to the sign change in $\Delta$ and, ultimately, the change in the modulational stability and instability for \eqref{E:main}-\eqref{def:c}. Note that 
\[
\text{$\pm\c(\k)=$the phase velocity\quad and\quad $\pm(\k\c(\k))'=$the group velocity}
\]
in the linear theory, where $\pm$ mean right and left propagating waves, respectively. Specifically,  
\begin{itemize}
\item[(R1)] $i_1(\k)=0$ at some $\k$; that is, the group velocity achieves an extremum at the wave number $\k$; 
\item[(R2)] $i_2(\k)=0$ at some $\k$; that is, the group velocity at the wave number $\k$ coincides with the phase velocity in the long wave limit as $\k\to0$, resulting in the ``resonance of short and long waves;"
\item[(R3)] $i_3(\k)=0$ at some $\k$; that is, the phase velocities of the fundamental mode and the second harmonic coincide at the wave number $\k$, resulting in the ``second harmonic resonance;"
\item[(R4)] $i_4(\k)=0$ at some $\k$.
\end{itemize}
Resonances (R1), (R2), (R3) are determined by the dispersion relation in the linear theory. For instance, $i_1$, $i_2$, $i_3$ appear in an index formula for \eqref{E:BW2} and \eqref{def:c1} (or \eqref{def:c}), which shares the dispersion relation in common with \eqref{E:main}-\eqref{def:c}; see \cite{HP1} for details. Moreover, $i_2$ appears in \cite{BM1995}, albeit implicitly. Resonance (R4), on the other hand, results from a rather complicated balance of the dispersion and nonlinear effects. For \eqref{E:BW2}, for instance, $i_4$ is replaced by $2i_3+\c^2(2\k)i_2$; see \cite{HP1} for details. To compare, a modulational instability index for the Whitham equation (see \cite{HJ2,HJ3}, for instance)
\[
\frac{(\k\c(\k))''((\k\c(\k))'-1)}{\c(\k)-\c(2\k)}(\k\c(\k))'-1+2(\c(\k)-\c(2\k))
\]
elucidates the same resonance mechanisms which contribute to the change in the modulational stability and instability, but in unidirectional propagation.

\subsection{Critical wave number}

Since $(\k\c(\k))'<1$ for any $\k>0$ and decreases monotonically over the interval $(0,\infty)$ by brutal force, $i_1(\k)<0$ and $i_2(\k)<0$ for any $\k>0$. Since $\c(\k)>0$ for any $\k>0$ and decreases monotonically over the interval $(0,\infty)$ (see Figure~\ref{fig:c}), $i_3(\k)>0$ for any $\k>0$. Hence the sign of $\Delta$ coincides with that of $i_4$. By the way, $i_1$, $i_2$, $i_3$ may change their signs in the presence of the effects of surface tension; see Section~\ref{sec:ST} for details.

We use \eqref{def:i4} and make an explicit calculation to show that 
\[
\lim_{\k\to 0+}\frac{i_4(\k)}{\sqrt{\k}^{5}}=9\quad\text{and}\quad\lim_{\k\to\infty}\k i_4(\k)=-3.
\]
Hence $\Delta(\k)>0$ for $\k>0$ sufficiently small, implying the modulational stability, and it is negative for $\k>0$ sufficiently large, implying the spectral stability in the vicinity of the origin in $\mathbb{C}$. Moreover, the intermediate value theorem asserts a root of $i_4$, which changes the modulational stability and instability. 

\begin{figure}[h] 
~~\includegraphics[scale=0.7]{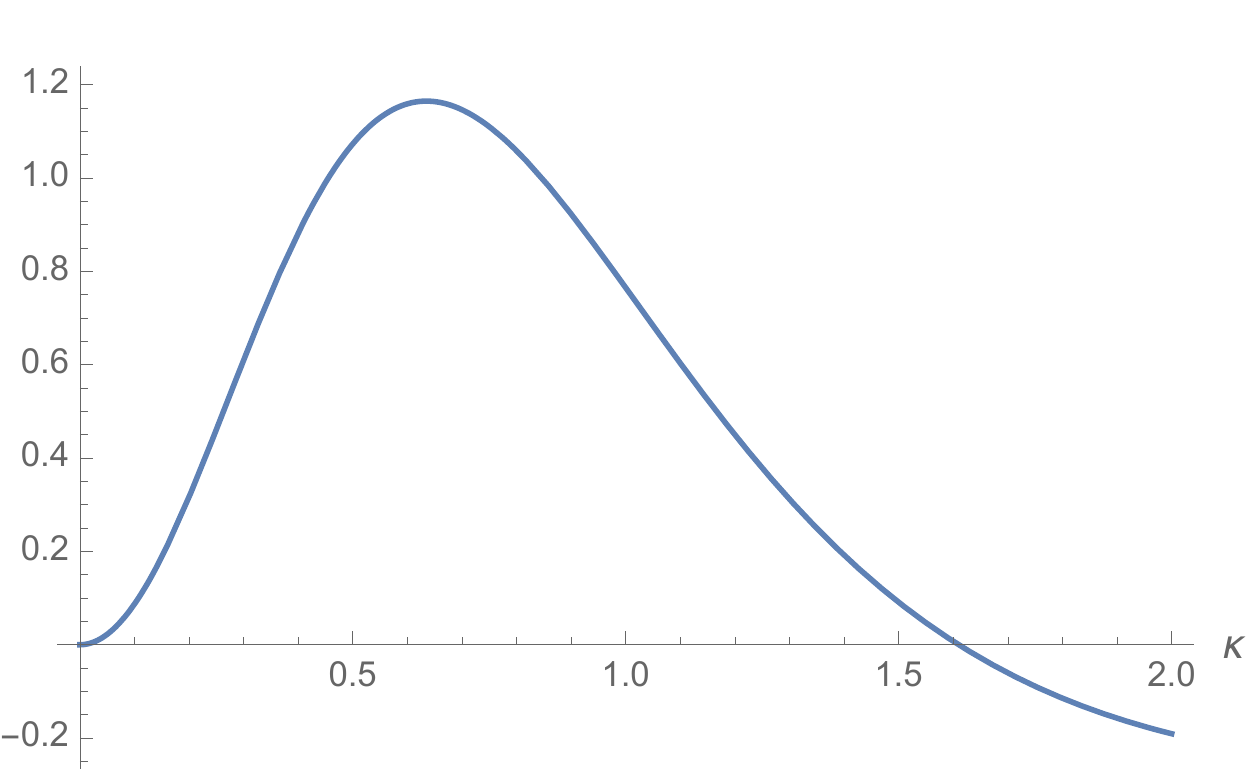}\quad
\caption{The graph of $i_4(\k)$ for $\k\in(0,2)$.}\label{fig:i4}
\end{figure}

\begin{figure}[h] 
~~\includegraphics[scale=0.7]{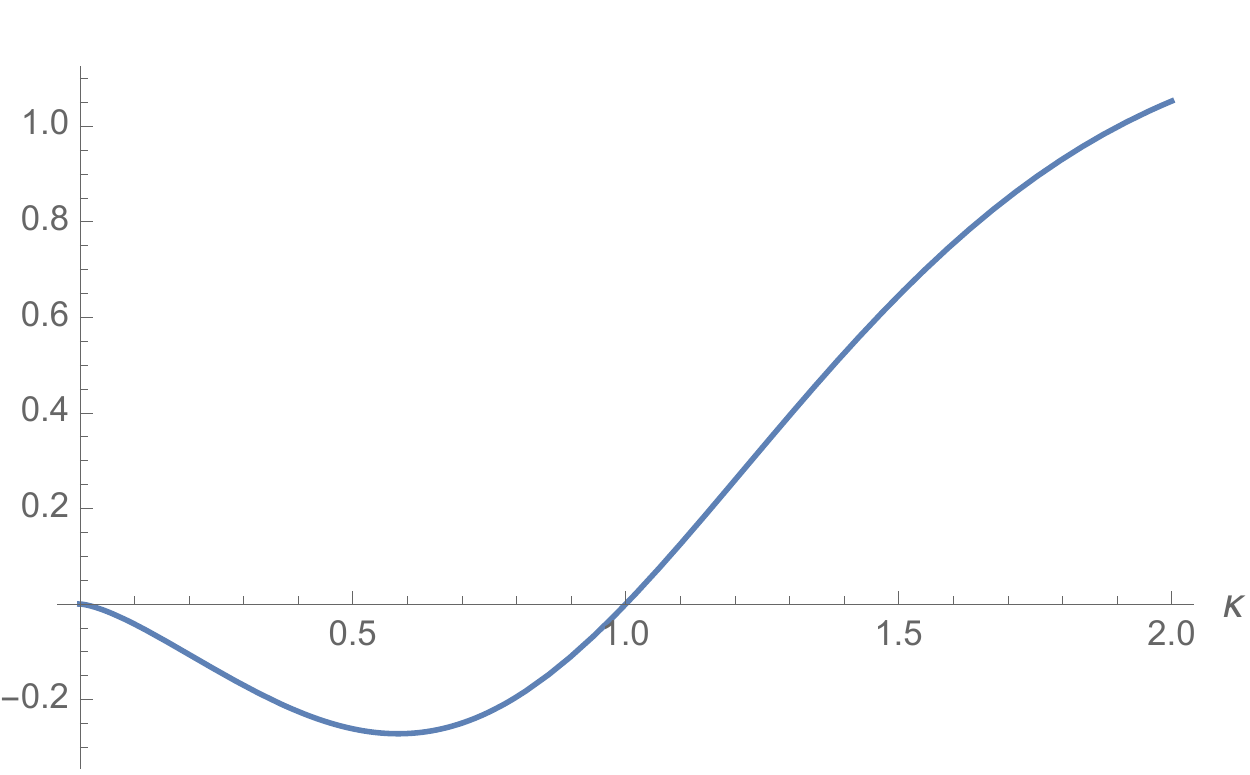}\quad
\caption{The graph of $i_4(1.61\k^{-1})$ for $\k\in(0,2)$.}\label{fig:i4k}
\end{figure}

It is difficult to analytically study the sign of $i_4$ further. On the other hand, a numerical evaluation of \eqref{def:i4} reveals a unique root $\k_c$, say, of $i_4$ over the interval $(0,\infty)$ (see Figure~\ref{fig:i4}) such that $i_4(\k)>0$ if $0<\k<\k_c$ and it is negative if $\k_c<\k<\infty$. Upon close inspection (see Figure~\ref{fig:i4k}), moreover, $\k_c=1.610\dots$. We summarize the conclusion.

\begin{corollary}[Critical wave number]\label{cor:kc} 
A sufficiently small, $2\pi/\k$-periodic wave train of \eqref{E:main}-\eqref{def:c} is modulationally unstable if $\k>\k_c$, where $\k_c=1.610\dots$ is a unique root of $i_4$ in \eqref{def:i4} over the interval $(0,\infty)$. It is spectrally stable to square integrable perturbations in the vicinity of the origin in $\mathbb{C}$ if $0<\k<\k_c$.
\end{corollary}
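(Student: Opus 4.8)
The plan is to use Theorem~\ref{thm:index} to reduce the corollary to a sign analysis of the single factor $i_4$ in \eqref{def:i4}. First I would fix the signs of the three ``purely dispersive'' factors for every $\k>0$. Writing $\k\c(\k)=\sqrt{\k\tanh\k}$, the group velocity $(\k\c(\k))'$ is positive, and a direct computation of $(\k\c(\k))'$ and $(\k\c(\k))''$ --- using $\tanh'=\operatorname{sech}^2$ and the monotone decay of $\c$ recorded in Section~\ref{ss:c} (see Figure~\ref{fig:c}) --- shows that $(\k\c(\k))'$ decreases strictly from its limit $1$ at $\k=0$ to $0$ as $\k\to\infty$. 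Hence $i_1(\k)=(\k\c(\k))''<0$ and $i_2(\k)=((\k\c(\k))')^2-1<0$ for all $\k>0$. Since $\c$ is positive and strictly decreasing on $(0,\infty)$, $\c(\k)>\c(2\k)>0$, so $i_3(\k)=\c^2(\k)-\c^2(2\k)>0$. Therefore $i_1(\k)i_2(\k)/i_3(\k)>0$, the sign of $\Delta(\k)$ in \eqref{def:ind} coincides with the sign of $i_4(\k)$, and by Theorem~\ref{thm:index} the wave train is modulationally unstable exactly where $i_4(\k)<0$ and spectrally stable in the vicinity of the origin where $i_4(\k)\geq0$.

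Second, I would locate the sign change of $i_4$. Inserting the Taylor expansion $\tanh\k=\k-\tfrac13\k^3+\tfrac{2}{15}\k^5-\cdots$ into \eqref{def:c} and then into \eqref{def:i4}, and tracking leading orders, gives $i_4(\k)>0$ for all sufficiently small $\k>0$ (cf. the limit computed just before the statement), so the wave train is modulationally stable for long carrier waves. Using instead $\tanh\k\to1$, $\c^2(\k)\sim1/\k$, $\c(\k)\c'(\k)\sim-\tfrac12\k^{-2}$ and $\k^2(\c')^2(\k)\sim\tfrac14\k^{-1}$ as $\k\to\infty$, each of the five groups in \eqref{def:i4} is $O(1/\k)$ and their coefficients combine so that $\k\,i_4(\k)$ has a negative limit (again cf. the discussion preceding the statement); hence $i_4(\k)<0$ for all large $\k$, i.e. modulational instability for short carrier waves. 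The intermediate value theorem then yields at least one zero $\k_c\in(0,\infty)$ of $i_4$, across which the modulational stability turns into instability.

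The last, and I expect the genuinely hard, point is that $\k_c$ is the \emph{only} zero, together with its numerical value $\k_c=1.610\dots$. Since $i_4$ is a transcendental combination of $\tanh\k$, $\tanh2\k$ and their derivatives, I do not see an elementary monotonicity or convexity statement that isolates a single root; the route used here is a numerical evaluation of \eqref{def:i4}, which shows $i_4$ has exactly one sign change on $(0,\infty)$, from $+$ to $-$ (Figure~\ref{fig:i4}), located upon refinement at $\k=1.610\dots$ (Figure~\ref{fig:i4k}). Combining uniqueness with the two asymptotic limits gives $i_4>0$ on $(0,\k_c)$ and $i_4<0$ on $(\k_c,\infty)$; feeding this back through the first step, Theorem~\ref{thm:index}, and the spectral stability in the vicinity of the origin for Floquet exponents bounded away from zero established in Section~\ref{ss:a}, yields exactly the dichotomy in the statement. (A fully rigorous uniqueness proof would require certified, e.g. interval-arithmetic, enclosures of $i_4$ on a mesh of $(0,\infty)$ together with enclosures of $i_4'$ near the crossing, which we do not pursue here.)
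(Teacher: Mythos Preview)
Your proposal is correct and follows essentially the same approach as the paper: fix the signs of $i_1,i_2,i_3$ from the monotonicity of $\c$ and $(\k\c(\k))'$, reduce the question to the sign of $i_4$, use the small- and large-$\k$ asymptotics to force a sign change via the intermediate value theorem, and then rely on the numerical evaluation in Figures~\ref{fig:i4}--\ref{fig:i4k} for uniqueness and the value $\k_c=1.610\dots$. Your explicit acknowledgment that the uniqueness step is numerical rather than analytically certified is in fact exactly the status of the argument in the paper.
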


Corollary~\ref{cor:kc} qualitatively states the Benjamin-Feir instability of a Stokes wave. Fortuitously, the critical wave number compares reasonably well with that in \cite{BH, Whitham1967} and \cite{BM1995}. The critical wave number for the Whitham equation (see \eqref{E:Whitham}) is $1.146\dots$; see \cite{HJ2}, for instance. 

We point out that the critical wave number in \cite{BH, Whitham1967} and \cite{BM1995} was determined by an approximation of the numerical value of some explicit function of $\k$, which seems difficult to calculate analytically. Therefore, it is not surprising that the proof of Corollary~\ref{cor:kc} ultimately relies on a numerical evaluation of the modulational instability index \eqref{def:ind}.

\section{Stability and instability away from the origin}\label{sec:HF}

Let $\eta=\eta(a;\k,0,0)$, $u=u(a;\k,0,0)$, and $c=c(a;\k,0,0)$, for some $a\in\mathbb{R}$ and $|a|$ sufficiently small for some $\k>0$, denote a $2\pi/\k$-periodic wave train of \eqref{E:main}-\eqref{def:c} near the rest state, whose existence follows from Theorem~\ref{thm:existence}. In the previous section, we studied the spectrum of the associated linearized operator in the vicinity of the origin in $\mathbb{C}$, whereby we determined its modulational stability and instability. We turn the attention to the spectral stability and instability away from the origin. Throughout the section, we employ the notation in the previous section.

\subsection{Collision condition}\label{ss:collision}

Recall from Section~\ref{ss:a=0} that
\[
\mathcal{L}(\xi,0)\e(n+\xi,\pm)=i\omega(n+\xi,\pm)\e(n+\xi,\pm)\qquad \text{for $n\in \mathbb{Z}$ and $\xi\in[0,1/2]$},
\]
where 
\[
\omega(n+\xi,\pm)=(n+\xi)(\c(\k)\pm\c(\k(n+\xi)))\quad\text{and}\quad
\e(n+\xi,\pm)(z)=\begin{pmatrix}1\\ \mp \c(\k(n+\xi) \end{pmatrix}e^{inz}
\]
for $\k>0$. Recall that $i\omega(0,\pm)=i\omega(\pm1,-)=0$ and $i\omega(n+\xi,\pm)\neq 0$ otherwise. Moreover, no collisions take place among $i\omega(n+\xi,+)$'s or among $i\omega(n+\xi,-)$'s except at the origin. But
\[
i\omega(n_1+\xi,-)=i\omega(n_2+\xi,+)\neq 0\qquad \text{for some $n_1,n_2\in\mathbb{Z}$ and $\xi\in(0,1/2]$}
\]
for some $\k>0$ if and only if
\begin{equation}\label{E:collision}
(n_1+\xi)\c(\k(n_1+\xi))+(n_2+\xi)c(\k(n_2+\xi))=(n_1-n_2)\c(\k).
\end{equation}

We claim that $i\omega(n_1+\xi,-)$ and $i\omega(n_2+\xi,+)$ do not collide for any $n_1$, $n_2\in\mathbb{Z}$ such that $|n_1-n_2|=1$ and $\xi\in(0,1/2]$ for any $\k>0$. Suppose on the contrary that $i\omega(n_1+\xi,-)=i\omega(n_2+\xi,+)$ for some $n_1$, $n_2\in\mathbb{Z}$ such that $|n_1-n_2|=1$ and $\xi\in(0,1/2]$ for some $\k>0$. Assume for now $n_1=n_2+1$ and $n_2\geq 0$. Since $n_2+1+\xi\geq1$ and since $z\c(z)>0$ and increases monotonically over the interval $(0,\infty)$, it follows that
\[
(n_2+1+\xi)\c(\k(n_2+1+\xi))+(n_2+\xi)\c(\k(n_2+\xi))>\c(\k)
\]
for any integer $n_2\geq 0$ and $\xi\in(0,1/2]$ for any $\k>0$. This contradicts \eqref{E:collision}. One may repeat the argument for $n_1$, $n_2<0$. This proves the claim.

\begin{figure}[h]
~~\includegraphics[scale=0.5]{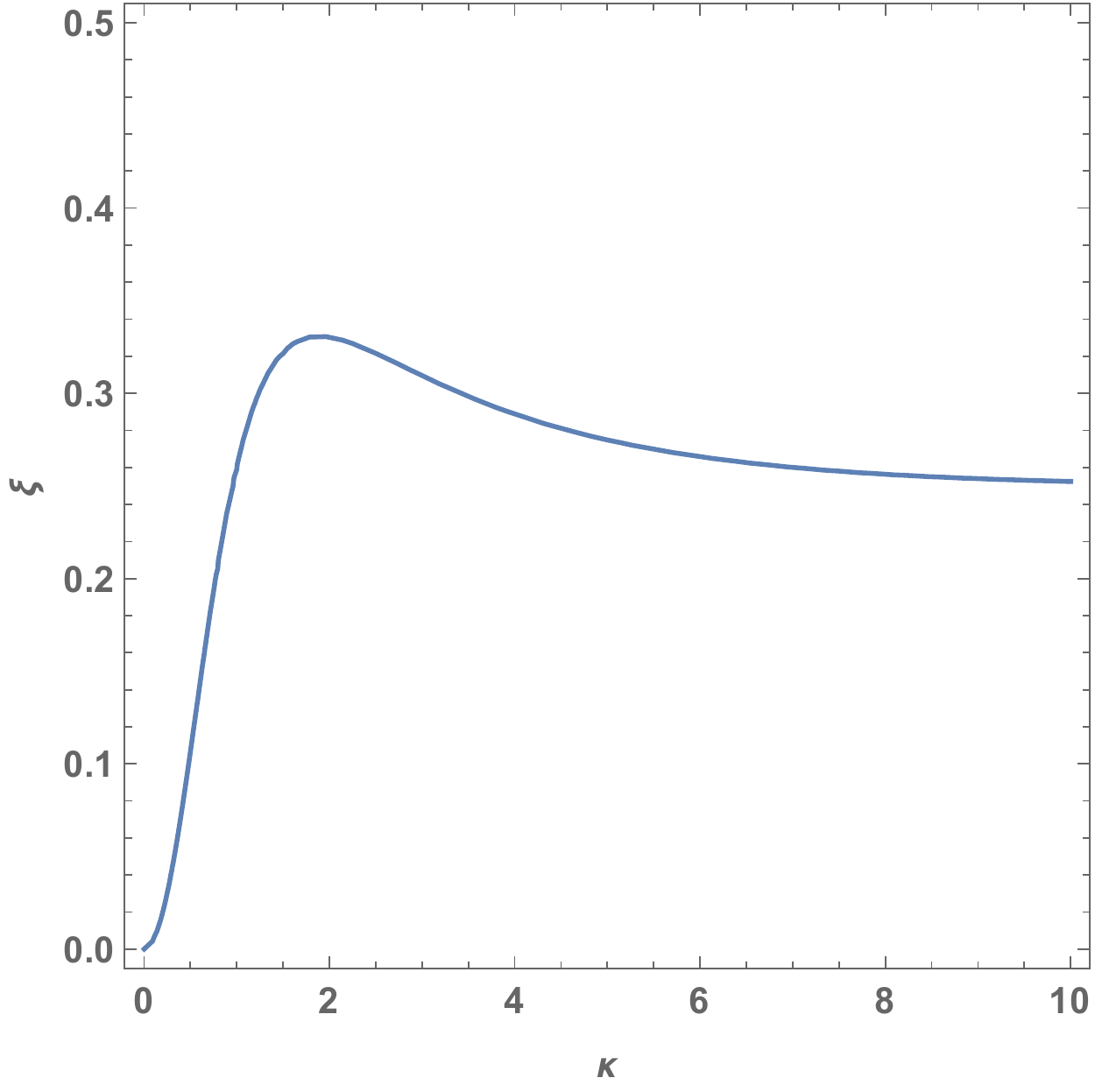}\quad
\caption{$\k$ vs. $\xi$ when $i\omega(2+\xi,-)$ and $i\omega(0+\xi,+)$ collide.}
 \label{fig:xi-k}
\end{figure}

To proceed, a numerical evaluation of \eqref{E:collision} reveals that $i\omega(n_1+\xi,-)=i\omega(n_2+\xi,+)$ for some $n_1$, $n_2\in\mathbb{Z}$ such that $|n_1-n_2|=2$ and $\xi\in(0,1/2]$ and for some $\k>0$ if and only if $n_1=2$ and $n_2=0$. Such a collision takes place for any $\k>0$ at some $\xi\in(0,1/2]$ depending on $\k$; see Figure~\ref{fig:xi-k}.

\begin{figure}[h] 
~~\includegraphics[scale=0.5]{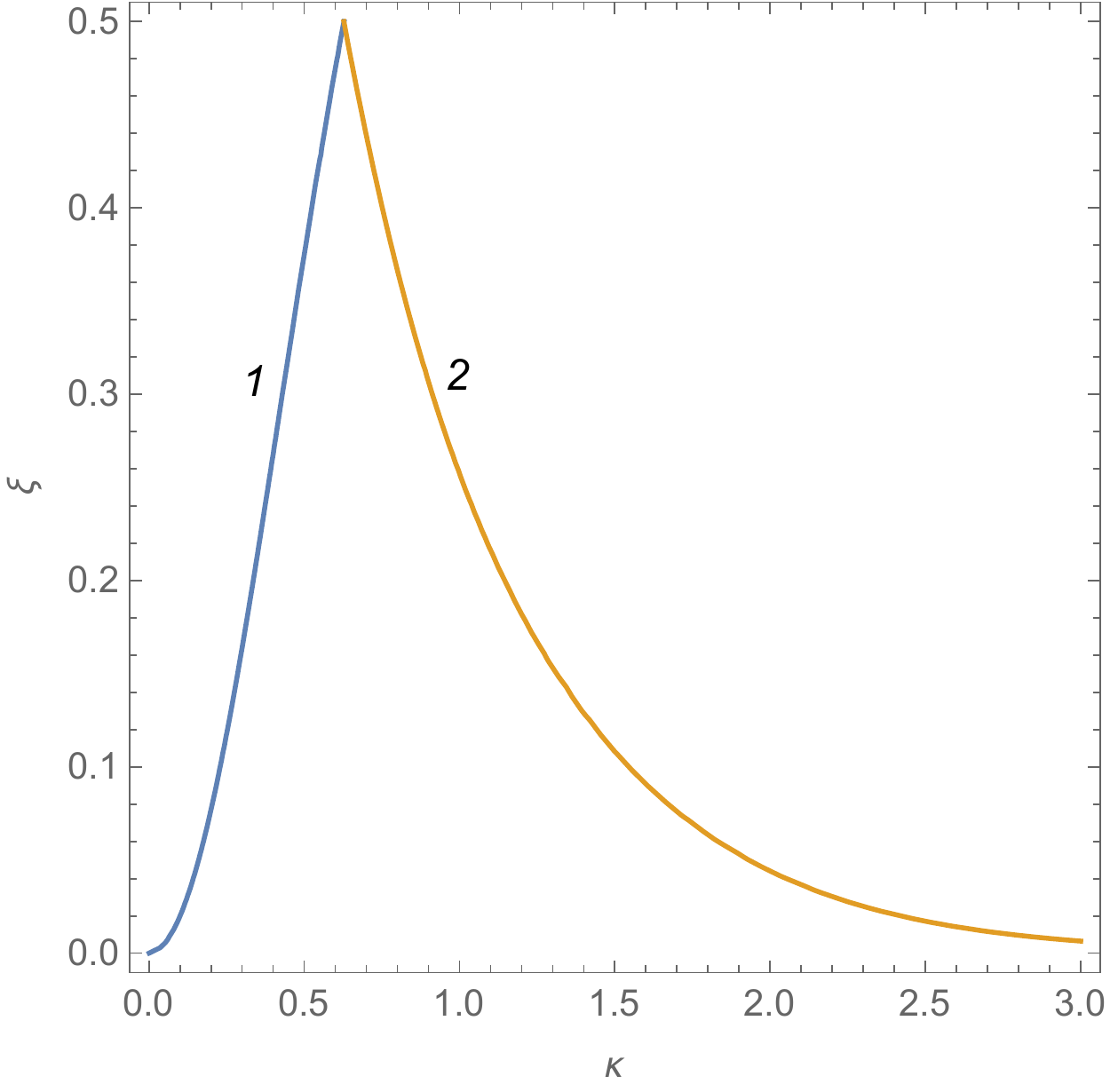}\quad
\caption{$\k$ vs. $\xi$ when $i\omega(n_1+\xi,-)$ and $i\omega(n_2+\xi,+)$ collide for $|n_1-n_2|=3$. Along Curve~1, $(n_1,-)=(0,-)$ and $(n_2,+)=(3,+)$ collide. Curve~2 represents the collision for $(-1,-)$ and $(-4,+)$.}
\label{fig:case3}
\end{figure}

Moreover, $i\omega(n_1+\xi,-)=i\omega(n_2+\xi,+)$ for some $n_1$, $n_2\in\mathbb{Z}$ such that $|n_1-n_2|=3$ and $\xi\in(0,1/2]$ for some $\k>0$ if and only if $n_1=0$ and $n_2=3$, or else $n_1=-1$ and $n_2=-4$. Together, such a collision takes place for any $\k>0$ at some $\xi\in(0,1/2]$ depending on $\k$; see Figure~\ref{fig:case3}.

\begin{figure}[h]
~~\includegraphics[scale=0.5]{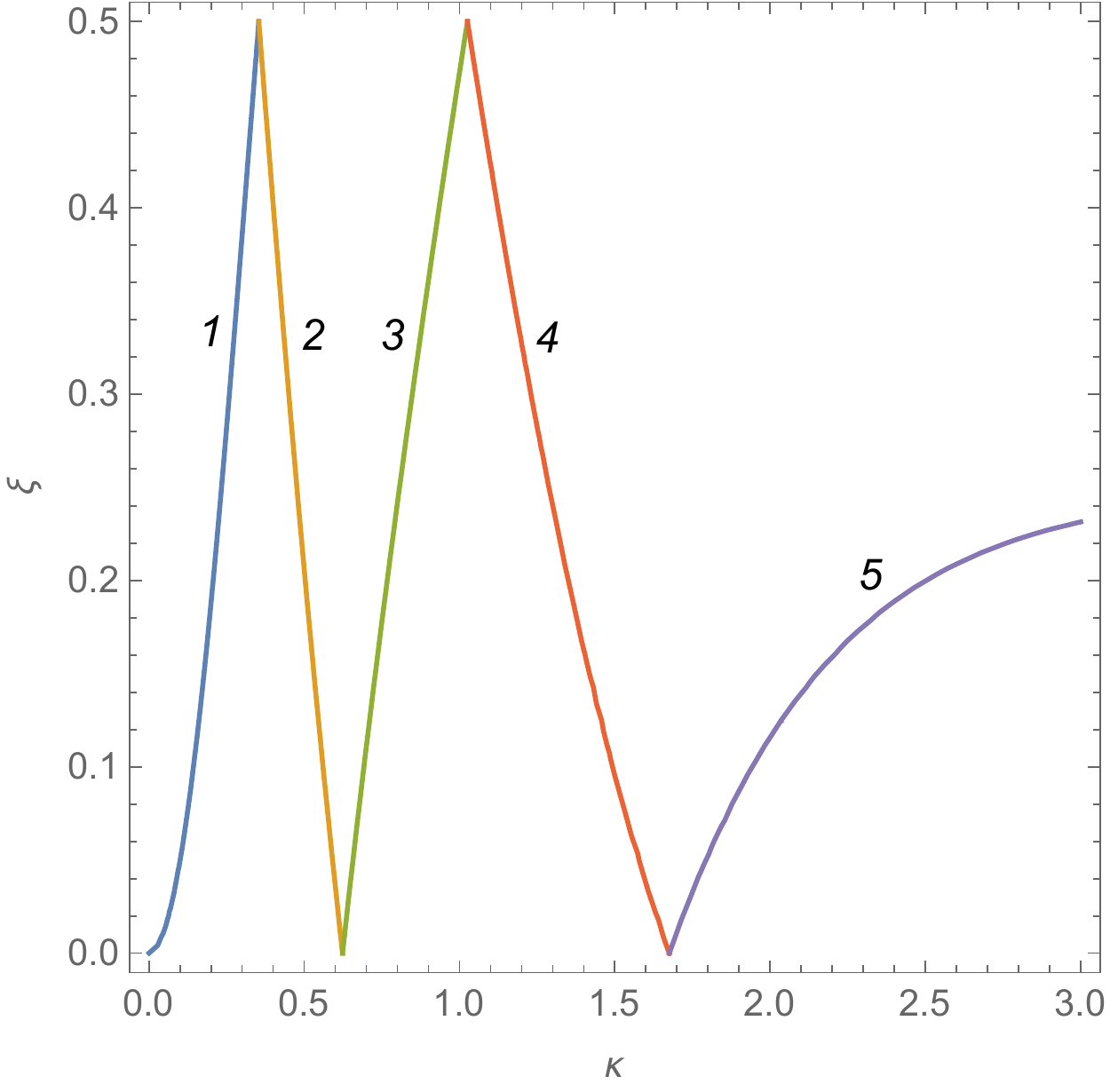}\quad
\caption{$\k$ vs. $\xi$ when $i\omega(n_1+\xi,-)$ and $i\omega(n_2+\xi,+)$ collide for $|n_1-n_2|=4$. Curves 1 through 5 represent the collisions for $(n_1,-)=(0,-)$ and $(n_2,+)=(4,+)$, $(-1,-)$ and $(-5,+)$, $(1,-)$ and $(5,+)$, $(-2,-)$ and $(-6,+)$, $(2,-)$ and $(6,+)$, respectively.}
 \label{fig:case4}
\end{figure}

Continuing, a numerical evaluation of \eqref{E:collision} reveals five collisions for $|n_1-n_2|=4$ when $(n_1,-)=(0,-)$ and $(n_2,+)=(4,+)$, $(-1,-)$ and $(-5,+)$, $(1,-)$ and $(5,+)$, $(-2,-)$ and $(-6,+)$, $(2,-)$ and $(6,+)$. Together, such a collision takes place for any $\k>0$ at some $\xi\in(0,1/2]$ depending on $\k$; see Figure~\ref{fig:case4}. 

\begin{table}
\centering
\begin{tabular}{c|c|c}
 $n_1$ & $n_2$ & $\xi$  \\ \hline
  $2$ & $0$  & 0.261\dots\\ \hline
   $1$ & $5$  & 0.473\dots \\ \hline
    $4$ & $10$  & 0.184\dots\\ \hline
     $6$ & $13$  & 0.158\dots\\ \hline
      $11$ & $20$  & 0.250\dots\\ \hline
       $14$ & $24$  & 0.368\dots\\ \hline
        $26$ & $39$  & 0.006\dots\\ 
\end{tabular}
\caption{Some $n_1$, $n_2$, and $\xi$ when $i\omega(n_1+\xi,-)$ and $i\omega(n_2+\xi,+)$ collide for $\k=1$ .}
\label{table1}
\end{table}

Indeed, for any $\k>0$ for any integer $n\geq2$, it is possible to find $n_1$, $n_2\in\mathbb{Z}$ such that $i\omega(n_1+\xi,-)=i\omega(n_2+\xi,+)$ for some $\xi\in(0,1/2]$ and $|n_1-n_2|=n$. The number of collisions increases as $|n_1-n_2|$ increases. Therefore, $i\omega(n_1+\xi,-)$ and $i\omega(n+\xi,+)$ collide for infinitely many $n_1$, $n_2\in\mathbb{Z}$. In Table~\ref{table1}, we record some $n_1$, $n_2\in\mathbb{Z}$ ($n_1$, $n_2\geq 0$) and $\xi\in(0,1/2]$ for which $i\omega(n_1+\xi,-)$ and $i\omega(n_2+\xi,+)$ collide for $\k=1$. 

The spectrum of the linear operator associated with the water wave problem in the finite depth (see \cite{DO, AN2014}, for instance, for details) contains infinitely many collisions of purely imaginary eigenvalues, which align with the collisions of $i\omega(n_1+\xi,-)$ and $i\omega(n_2+\xi,+)$ for \eqref{E:main}-\eqref{def:c}. To compare, no eigenvalues of the linear operator for the Whitham equation (see \eqref{E:Whitham}) collide other than at the origin; see \cite{HJ2}, for instance, for details. 

\subsection{Signature calculation}\label{ss:signature}

The linear equation associated with \eqref{E:main}-\eqref{def:c} in the moving coordinate frame may be written in Hamiltonian form as
\[
\partial_t \u=\begin{pmatrix}0 &- \partial_z\\ -\partial_z & 0 \end{pmatrix}\delta H(\u),
\]
where 
\[
H(\u)=\int \Big(\frac12 \eta\c^2(\k|\partial_z|)\eta+\frac12u^2-c\eta u\Big)~dz 
\]
and $\delta$ means variational differentiation. By the way, to the best of the authors' knowledge, the Hamiltonian structure for \eqref{E:main}-\eqref{def:c} itself is not understood. In contrast, \eqref{E:BW3} and \eqref{def:c1} (or \eqref{def:c}) are a Hamiltonian system. But the well-posedness for \eqref{E:BW3} is not understood, whence it is not suitable for the purpose of describing wave propagation. Note that the modulational instability proof in the previous section makes no use of Hamiltonian structure. 

If $i\omega(n+\xi,\pm)$ for some $n\in\mathbb{Z}$ and $\xi\in[0,1/2]$ is a nonzero and purely imaginary eigenvalue of  
\begin{align*}
\L(\xi,0)=&\begin{pmatrix}0 & -\partial_z+i\xi\\ -\partial_z+i\xi & 0 \end{pmatrix}
e^{-i\xi z}\begin{pmatrix} -\c(\k) & 1 \\ \c^2(\k|\partial_z|) & -\c(\k)\end{pmatrix}e^{i\xi z} \\
=&\begin{pmatrix}0 & -\partial_z+i\xi\\ -\partial_z+i\xi & 0 \end{pmatrix}e^{-i\xi z}\delta^2H(\mathbf{0})e^{i\xi z}
\end{align*}
(see \eqref{def:Lx} and \eqref{E:hu0}) then $e^{-i\xi z}\delta^2H(\mathbf{0})e^{i\xi z}$ defines a non-degenerate quadratic form on the associated eigenspace. If the eigenvalue is simple then the eigenspace is spanned by $\e(n+\xi,\pm)$, and
\begin{align*}
\l e^{-i\xi z}\delta^2H(\mathbf{0})e^{i\xi z} \e(n+\xi,\pm),\e(n+\xi,\pm)\r=&\mp 2\c(\k(n+\xi))(\c(\k)\pm\c(\k(n+\xi))) \\
=&\mp2\k\omega(n+\xi,\pm),
\end{align*} 
where $\l\cdot\,,\cdot\r$ means the $L^2(\mathbb{T})\times L^2(\mathbb{T})$ inner product. This is either positive or negative, called the {\em Krein signature}. It is straightforward to verify that the Krein signature is positive for
\begin{equation}\label{E:+sig}
i\omega(n+\xi,-) \quad\text{for $n=-1$ or $n\geq 1$}\quad\text{and}\quad
i\omega(n+\xi,+)\quad \text{for $n\geq 0$},
\end{equation}
and negative for
\begin{equation}\label{E:-sig}
i\omega(n+\xi,-)\quad\text{for $n=0$ or $n\leq -2$}\quad\text{and}\quad
i\omega(n+\xi,+)\quad \text{for $n\geq -1$}
\end{equation}
for any $\xi\in[0,1/2]$. The eigenvalue remains simple and the Krein signature does not change, as parameters vary, so long as it does not collide with another eigenvalue; see \cite{MS1986}, for instance, for details.

It is well known (see \cite{MS1986}, for instance, and reference therein) that 
a necessary condition for spectral instability is that a pair of eigenvalues on the imaginary axis with opposite signature collide, unless they are at the origin. Note from \eqref{E:collision}, and \eqref{E:+sig}, \eqref{E:-sig} that the signatures of all colliding eigenvalues of $\L(\xi,0)$ are opposite, unless they are at the origin. 
For the zero eigenvalue, the Krein signature calculation becomes inconclusive. But in the previous section, we made a spectral perturbation calculation and determined the modulational stability and instability.

\subsection{Spetra of $\L(\xi_0+\xi,0)$ and $\L(\xi_0,a)$}\label{ss:L4}

Let 
\[
i\omega(n_1+\xi_0,-)=i\omega(n_2+\xi_0,+)=:i\omega_0
\] 
for some $n_1, n_2\in\mathbb{Z}$ and $\xi_0\in(0,1/2]$ for some $\k>0$, denote a nonzero and purely imaginary, colliding $L^2(\mathbb{T}) \times L^2(\mathbb{T})$ eigenvalue of $\L(\xi_0,0)$. Recall that $\e(n_1+\x,-)$ and $\e(n_2+\x,+)$ are the associated eigenfunctions, complex valued and orthogonal to each other. For $\xi$, $a\in\mathbb{R}$ and $|\xi|$, $|a|$ sufficiently small, we calculate the spectra of $\L(\x+\xi,0)$ and $\L(\x,a)$ in the vicinity of $i\omega_0$ in $\mathbb{C}$. 

For real valued functions, one must take $\pm\x$ in pair (and, hence, $\pm i\omega_0$) and deal with four functions. But the spectral perturbation calculation in the following subsection involves complex valued operators anyway. Hence this is not worth the effort. 

\subsection*{Notation} In the remaining of the section, $\k>0$ and $\xi_0\in(0,1/2]$ are suppressed for simplicity of notation, unless specified otherwise. We use
\begin{equation}\label{def:cn}
c_0=\c(\k)\quad\text{and}\quad c_{n,\xi}=\c(\k(n+\x+\xi))
\end{equation} 
and 
\begin{equation}\label{def:Mn}
\mathbf{C}_0=\begin{pmatrix} c_0 & 1\\ 0 & c_0 \end{pmatrix}\quad \text{and} \quad
\mathbf{C}_{n,\xi}=\begin{pmatrix}c_0 & -1\\ -c^2_{n,\xi} & c_0 \end{pmatrix}
\end{equation}
for simplicity of notation.

\

For $\xi\in\mathbb{R}$ and $|\xi|$ sufficiently small, note from \eqref{E:Lx0=0} that $i\omega(n_1+\xi_0+\xi,-)$ and $i\omega(n_2+\xi_0+\xi,+)$ are $L^2(\mathbb{T})\times L^2(\mathbb{T})$ eigenvalues of $\L(\x+\xi,0)$ in the vicinity of $i\omega_0$ in $\mathbb{C}$, and
\begin{equation}\label{def:qx0}
\begin{aligned}
&\q_1(z):=\e(n_1+\x+\xi,-)=\begin{pmatrix} 1\\ c_{n_1,\xi} \end{pmatrix} e^{in_1 z}, \\
&\q_2(z):=\e(n_2+\x+\xi,+)=\begin{pmatrix} 1\\ -c_{n_2,\xi} \end{pmatrix} e^{in_2z}
\end{aligned}
\end{equation}
are the associated eigenfunctions, complex valued and orthogonal to each other. 

As $|a|$ increases, two eigenvalues at $i\omega_0$ of $\L(\xi_0,0)$ may move around and the associated eigenfunctions vary, analytically, from $\e(n_1+\x,-)$ and $\e(n_2+\x,+)$. For $a\in\mathbb{R}$ and $|a|$ sufficiently small, we calculate the small amplitude expansion of the eigenvalues and eigenfunctions of $\L(\x,a)$ up to terms of order $a$. To compare, Lemma~\ref{lem:L} implies that as $a$ varies, zero persists to be an eigenvalue of $\L(0,a)$ and one may exploit the variations of \eqref{E:periodic} to find the associated eigenfunctions to any order in $a$. Let (by abuse of notation)
\begin{equation}\label{E:HFa}
\L(\x,a)\q_k(a)=\lambda(a)\q_k(a) \qquad \text{for $k=1,2$},
\end{equation}
where
\begin{equation}\label{def:HFeigen}
\lambda(a)=i\omega_0+a \lambda_1+O(a^2)\quad\text{and}\quad 
\q_k(a)(z)=\begin{pmatrix} 1\\ \pm c_{n_k,0}\end{pmatrix}e^{in_k z}+a\q_{k,1}(z)+O(a^2)
\end{equation}
as $a\to 0$, $\lambda_1\in\mathbb{C}$ and $\q_{1,1}$, $\q_{2,1}$ be $2\pi$ periodic. 
Note from \eqref{def:Lx} and \eqref{E:hu-small} that
\begin{align}\label{E:HFL}
\L(\x,a)= &e^{-i\x z}\partial_z \begin{pmatrix} c_0 & -1\\ -\c^2(\k|\partial_z|) & c_0\end{pmatrix} e^{i\x z}
-a e^{-i\x z}\partial_z(\mathbf{C}_0\cos z)e^{i\x z}+O(a^2)\notag \\
=:&\L_0+a\L_1+O(a^2)
\end{align}
as $a \to0$. 

Substituting \eqref{def:HFeigen} and \eqref{E:HFL} into \eqref{E:HFa}, we make an explicit calculation to arrive, at the order of $1$, at 
\begin{equation}\label{E:q0}
\L_0\begin{pmatrix} 1\\ \pm c_{n_k,0}\end{pmatrix}e^{in_kz}
=i\omega_0 \begin{pmatrix} 1\\ \pm c_{n_k,0}\end{pmatrix}e^{in_k z} \qquad \text{for $k=1$, $2$},
\end{equation}
which holds true by hypothesis. 

To proceed, at the order or $a$, we gather
\[
\L_0\q_{k,1}+\L_1\begin{pmatrix} 1\\ \pm c_{n_k,0}\end{pmatrix}e^{in_kz}
=i\omega_0 \q_{k,1}+\lambda_1\begin{pmatrix} 1\\ \pm c_{n_k,0}\end{pmatrix}e^{in_kz} \qquad \text{for $k=1$, $2$}.
\]
If $\q_{k,1}(z)=\sum_{m\in \mathbb{Z}}\mathbf{q}_{n_k,m}e^{i(n_k+m)z}$ in the Fourier series then
\begin{equation}\label{E:q1}
\begin{split}
\sum_{m\in\mathbb{Z}} i(n_k+m+\x)\mathbf{C}_{n_k+m,0}\mathbf{q}_{n_k,m}&e^{i(n_k+m)z} \\
-\frac12 \begin{pmatrix}\c(\k)\pm c_{n_k,0}\\ \pm \c(\k)c_{n_k,0} \end{pmatrix}
(i(n_k+&1+\x)e^{i(n_k+1)z}+i(n_k-1+\x)e^{i(n_k-1)z})\\
=&i\omega_0 \sum_{m\in\mathbb{Z}}\mathbf{q}_{n_k,m}e^{i(n_k+m)z}
+\lambda_1\begin{pmatrix}1\\ \pm c_{n_k,0} \end{pmatrix}e^{in_kz}
\end{split}
\end{equation}
for $k=1$, $2$. Upon inspection, it follows that $\mathbf{q}_{n_k,m}=\mathbf{0}$ unless $m=0$, $\pm1$. We then take the $L^2(\mathbb{T})\times L^2(\mathbb{T})$ inner products of \eqref{E:q1} and $\begin{pmatrix} 1 \\ 0 \end{pmatrix}e^{in_kz}$,  $\begin{pmatrix} 0 \\ 1 \end{pmatrix}e^{in_kz}$, $k=1$, $2$, to arrive at
\[
i(n_k+\x)\mathbf{C}_{n_k,0}\mathbf{q}_{n_k,0}=
i\omega_0\mathbf{q}_{n_k,0}+\lambda_1\begin{pmatrix}1 \\ \pm c_{n_k,0} \end{pmatrix} \qquad \text{for $k=1$, $2$}.
\]
Note from \eqref{E:q0} that $\lambda_1=0$. This agrees with the result in \cite{AN2014}, for instance, for the water wave problem. Note that $\mathbf{q}_{n_k,0}=\begin{pmatrix}1 \\ \pm c_{n_k,0} \end{pmatrix}$ up to the multiplication by a constant.

Continuing, we take the $L^2(\mathbb{T})\times L^2(\mathbb{T})$ inner products of \eqref{E:q1} and $\begin{pmatrix} 1 \\ 0 \end{pmatrix}e^{i(n_1\pm 1)z}$, $\begin{pmatrix} 0 \\ 1 \end{pmatrix}e^{i(n_1\pm 1)z}$ to arrive at
\[
(n_1\pm 1+\x)\Big(2\mathbf{C}_{n_1\pm 1,0}\mathbf{q}_{n_1\pm 1}
-\mathbf{C}_0\begin{pmatrix}1 \\ c_{n_1,0} \end{pmatrix}\Big)=2\omega_0 \mathbf{q}_{n_1,\pm 1}.
\]
A straightforward calculation then reveals that 
\begin{align}\label{def:Q1}
\mathbf{q}_{n_1,\pm 1}=
&\frac12\frac{n_1\pm 1+\x}{(\omega_0-c_0(n_1\pm 1+\x))^2-c^2_{n_1\pm 1,0}(n_1\pm 1+\x)^2} \\
&\times\begin{pmatrix} c_0(n_1\pm 1+\x)(c_0+2c_{n_1,0})-\omega_0 (c_0+c_{n_1,0}) \\ 
(n_1\pm 1+\x)(c^2_0c_{n_1,0}+c^2_{n_1\pm 1,0}(c_0+c_{n_1,0}))-\omega_0 c_0c_{n_1,0} \end{pmatrix}. \notag
\end{align}
We take the $L^2(\mathbb{T})\times L^2(\mathbb{T})$ inner products of \eqref{E:q1} and $\begin{pmatrix} 1 \\ 0 \end{pmatrix}e^{i(n_2\pm 1)z}$, $\begin{pmatrix} 0 \\ 1 \end{pmatrix}e^{i(n_2\pm 1)z}$, likewise, and we make an explicit calculation to find
\begin{align}\label{def:Q2}
\mathbf{q}_{n_2,\pm 1} =&
\frac12\frac{n_2\pm 1+\x}{(\omega_0-c_0(n_2\pm 1+\x))^2-c^2_{n_2\pm 1,0}(n_2\pm 1+\x)^2} \\
&\times\begin{pmatrix} c_0(n_2\pm 1+\x)(c_0-2c_{n_2,0})-\omega_0 (c_0-c_{n_2,0}) \\ 
(n_2\pm 1+\x)(-c^2_0c_{n_2,0}+c^2_{n_2\pm 1,0}(c_0-c_{n_2,0}))+\omega_0 c_0c_{n_2,0} \end{pmatrix}. \notag
\end{align}
We are able to calculate higher order terms in like manner. But the formulae become lengthy and complicated. We will investigate the details in a future publication.

To recapitulate, for $\xi\in\mathbb{R}$ and $|\xi|$ sufficiently small for $a=0$, the $L^2(\mathbb{T})\times L^2(\mathbb{T})$ spectrum of $\L(\x+\xi,0)$ contains two purely imaginary eigenvalues $i\omega(n_1+\x+\xi,-)$ and $i\omega(n_2+\x+\xi,+)$ in the vicinity of $i\omega_0$ in $\mathbb{C}$, and \eqref{def:qx0} makes the associated eigenfunctions, which depend analytically on $\xi$. For $\xi=0$ for $a\in\mathbb{R}$ and $|a|$ sufficiently small, the spectrum of $\L(\x,a)$ contains two eigenvalues at $i\omega_0$ up to the order of $a$, and 
\begin{equation}\label{def:q0a}
\q_{k}(a)=\begin{pmatrix} 1\\ \pm c_{n_k,0}\end{pmatrix}e^{in_k z}
+a\mathbf{q}_{n_k,1}e^{i(n_k+1)z}+a\mathbf{q}_{n_k,-1}e^{i(n_k-1)z}+O(a^2)\qquad\text{for $k=1$, $2$}
\end{equation}
makes the associated eigenfunctions, which depend analytically on $a$, where $\mathbf{q}_{n_k,\pm 1}$ for $k=1$, $2$ are in \eqref{def:Q1} and \eqref{def:Q2}.

\subsection{Spectra of $\L(\x+\xi,a)$}\label{ss:perturbation4}
For $\xi$, $a\in\mathbb{R}$ and $|\xi|$, $|a|$ sufficiently small, it follows from perturbation theory (see \cite[Section~4.3.5]{K}, for instance, for details) that the $L^2(\mathbb{T})\times L^2(\mathbb{T})$ spectrum of $\L(\x+\xi,a)$ contains two eigenvalues in the vicinity of $i\omega_0$ in $\mathbb{C}$, and the associated eigenfunctions vary analytically from \eqref{def:qx0} and \eqref{def:q0a}. Let (by abuse of notation)
\begin{equation}\label{def:q}
\begin{aligned}
\q_1(\xi,a)(z)=&\begin{pmatrix} 1\\ c_{n_1,\xi}\end{pmatrix}e^{in_1 z}
+a\mathbf{q}_{n_1,1}e^{i(n_1+1)z}+a\mathbf{q}_{n_1,-1}e^{i(n_1-1)z}+O(\xi^2a+a^2),  \\
\q_2(\xi,a)(z)=&\begin{pmatrix} 1\\ -c_{n_2,\xi}\end{pmatrix}e^{in_2 z}
+a\mathbf{q}_{n_2,1}e^{i(n_2+1)z}+a\mathbf{q}_{n_2,-1}e^{i(n_2-1)z}+O(\xi^2a+a^2)
\end{aligned}
\end{equation}
as $\xi$, $a\to0$, where $\mathbf{q}_{n_1,\pm1}$ and $\mathbf{q}_{n_2,\pm1}$ are in \eqref{def:Q1} and \eqref{def:Q2}. For $a=0$, note that $\q_1$ and $\q_2$ become \eqref{def:qx0}. For $\xi=0$ they become \eqref{def:q0a}. Hence $\q_1$ and $\q_2$ are the eigenfunctions associated with the eigenvalues of $\L(\x+\xi,a)$ near $i\omega_0$ up to terms of order $a$ as $\xi$, $a\to 0$.  It seems impossible to uniquely determine terms of order $\xi a$ in the eigenfunction expansion without an ad hoc orthogonality condition. Fortuitously, it turns out that they do not contribute to the spectral instability up to the order of $a$ as $\xi$, $a\to 0$. Hence we may neglect them in \eqref{def:q}.

We proceed as in Section~\ref{ss:perturbation} and calculate (by abuse of notation)
\begin{align}
\mathbf{L}(\xi,a)=&\begin{pmatrix} 
\dfrac{\l \L(\x+\xi,a)\q_k(\xi,a),\q_\ell(\xi,a)\r}{\l \q_k(\xi,a), \q_k(\xi,a)\r} \end{pmatrix}_{k,\ell=1,2}\label{def:L4} 
\intertext{and}
\mathbf{I}(\xi,a)=&\begin{pmatrix} 
\dfrac{\l \q_k(\xi,a),\q_\ell(\xi,a)\r}{\l \q_k(\xi,a), \q_k(\xi,a)\r} \end{pmatrix}_{k, \ell=1,2}\label{def:I4}
\end{align}
up to the order of $a$ as $\xi$, $a\to 0$. Throughout the subsection, $\l\cdot\,,\cdot\r$ means the $L^2(\mathbb{T})\times L^2(\mathbb{T})$ inner product. For $\xi$, $a\in\mathbb{R}$ and $|\xi|$, $|a|$ sufficiently small, it follows from perturbation theory that the roots of $\det(\mathbf{L}-\lambda\mathbf{I})$ coincide with the eigenvalues of $\L(\x+\xi,a)$ up to terms of order $a$. 

We begin by calculating 
\begin{align*}
\l\q_1,\q_1\r=&1+c^2_{n_1,\xi}+O(\xi^2a+a^2), \\
\l\q_2,\q_2\r=&1+c^2_{n_2,\xi}+O(\xi^2a+a^2),
\end{align*}
and $\l\q_1,\q_2\r=\l\q_2,\q_1\r=0+O(\xi^2a+a^2)$ as $\xi$, $a\to 0$, where $c_{n,\xi}$ is in \eqref{def:cn}.  We then write
\begin{align*}
\L(\x+\xi,a)=\L(\x+\xi,0)-&ae^{-i\xi_0z}\partial_z(\mathbf{C}_0\cos z)e^{i\xi_0z} \\
-&i\xi ae^{-i\xi_0z}(\mathbf{C}_0\cos z)e^{i\xi_0z}+O(\xi^2a+a^2)
\end{align*}
as $\xi$, $a\to 0$ (see \eqref{def:Lx} and \eqref{E:hu-small}), where $\mathbf{C}_0$ is in \eqref{def:Mn}. We use \eqref{def:q}, and we make a lengthy but explicit calculation to show that 
\begin{align*}
\L\q_{k}=&i\omega(n_k+\x+\xi,\mp)\begin{pmatrix} 1\\ \pm c_{n_k,\xi}\end{pmatrix}e^{in_k z} \\
&+ia(n_k+1+\x+\xi)\mathbf{C}_{n_k+1,\xi}\mathbf{q}_{n_k,1}e^{i(n_k+1)z} \\
&+ia(n_k-1+\x+\xi)\mathbf{C}_{n_k-1,\xi}\mathbf{q}_{n_k,-1} e^{i(n_k-1)z}\\
&-\frac12 ia\begin{pmatrix}c_0+c_{n_k,\xi}\\ c_0c_{n_k,\xi}\end{pmatrix}
((n_k+1+\x+\xi)e^{i(n_k+1)z}+(n_k-1+\x+\xi)e^{i(n_k-1)z})\\
&+O(\xi^2a+a^2)
\end{align*}
for $k=1$, $2$, as $\xi$, $a \to 0$, where $\mathbf{C}_{n,\xi}$ is in \eqref{def:Mn}. Exact formulae of $\mathbf{C}_{n_k\pm1,\xi}\mathbf{q}_{n_k,\pm1}$ are lengthy and tedious; see Section~\ref{sec:2,0} for instance. But they do not influence the result. Hence we omit the details. Continuing, we take the $L^2(\mathbb{T}) \times L^2(\mathbb{T})$ inner products of the above and \eqref{def:q}, and we make a lengthy but explicit calculation to show that 
\[
\frac{\l \L\q_{k},\q_{k}\r}{\l \q_{k},\q_{k}\r}=i\omega(n_k+\x+\xi,\mp)+O(\xi^2a+a^2)
\]
for $k=1$, $2$, and 
\[
\frac{\l \L\q_1,\q_2\r}{\l \q_1,\q_1\r}=\frac{\l \L\q_2,\q_1\r}{\l \q_2,\q_2\r}=0+O(\xi^2a+a^2)
\]
as $\xi, a\to 0$.

Together, \eqref{def:L4} and \eqref{def:I4} become
\[
\mathbf{L}(\xi,a)=\begin{pmatrix} i\omega(n_1+\x+\xi,-) & 0 \\ 0 & i\omega(n_2+\x+\xi,+) \end{pmatrix}+O(\xi^2a+a^2)
\]
and $\mathbf{I}(\xi,a)=\mathbf{I}+O(\xi^2a+a^2)$
as $\xi$, $a\to 0$, where $\mathbf{I}$ means the $2\times 2$ identity matrix. Clearly, for $\xi$, $a\in\mathbb{R}$ and $|\xi|$, $|a|$ sufficiently small, the roots of $\det(\mathbf{L}-\lambda\mathbf{I})(\xi,a)$ are purely imaginary up to terms of order $a$. Therefore, a sufficiently small, periodic wave train of \eqref{E:main}-\eqref{def:c} is spectrally stable to square integrable perturbations away from the origin in $\mathbb{C}$ to the linear order in the amplitude parameter. To compare, it is spectrally unstable in the vicinity of the origin in $\mathbb{C}$ at the linear order in the amplitude parameter if the modulational instability takes place. Hence, the modulational instability dominates the spectral instability away from the origin for \eqref{E:main}-\eqref{def:c}, if the latter takes place.

Numerical computations in \cite{McLean1981, McLean1982, MS1986, DO, AN2014}, for instance, report that nonzero colliding eigenvalues of the linear operator for the water wave problem contribute to spectral instability as the amplitude increases. The results are implicit, but the growth rate of an unstable eigenvalue seems the steepest at the origin. For instance, for $(n_1,-)=(2,-)$, $(n_2,+)=(6,+)$ and the colliding eigenvalue at $i3.353\dots$ but for $a=0.245\dots$, the unstable eigenvalue grows like $a^4$; see \cite{MS1986}, for instance, for details. But it is difficult to analytically find colliding eigenvalues away from the origin in $\mathbb{C}$ for $a\neq0$. 

In Section~\ref{sec:2,0}, we calculate some higher order terms near the colliding eigenvalues for $(n_1,-)=(2,-)$ and $(n_2,+)=(0,+)$. Unfortunately, we do not detect spectral instability up to the orders of $\xi a$ and $a^2$. But the result seems to agree with that in \cite{AN2014}, for instance, from a numerical computation for the physical problem. 

\section{Effects of surface tension}\label{sec:ST}

The results in the previous sections may be adapted to other related equations. We illustrate this for the full-dispersion shallow water equations in the presence of the effects of surface tension. That is, 
\begin{equation}\label{def:cT}
\c(\k;T):=\sqrt{(1+T\k^2)\frac{\tanh\k}{\k}}
\end{equation}
replaces \eqref{def:c}, where $T$ is the coefficient of surface tension. Throughout the section, we employ the notation in Section~\ref{sec:existence} and Section~\ref{sec:MI}. 

\subsection*{Properties of $\c(\cdot\,;T)$}

\begin{figure}[h]
(a)~~\includegraphics[scale=0.4]{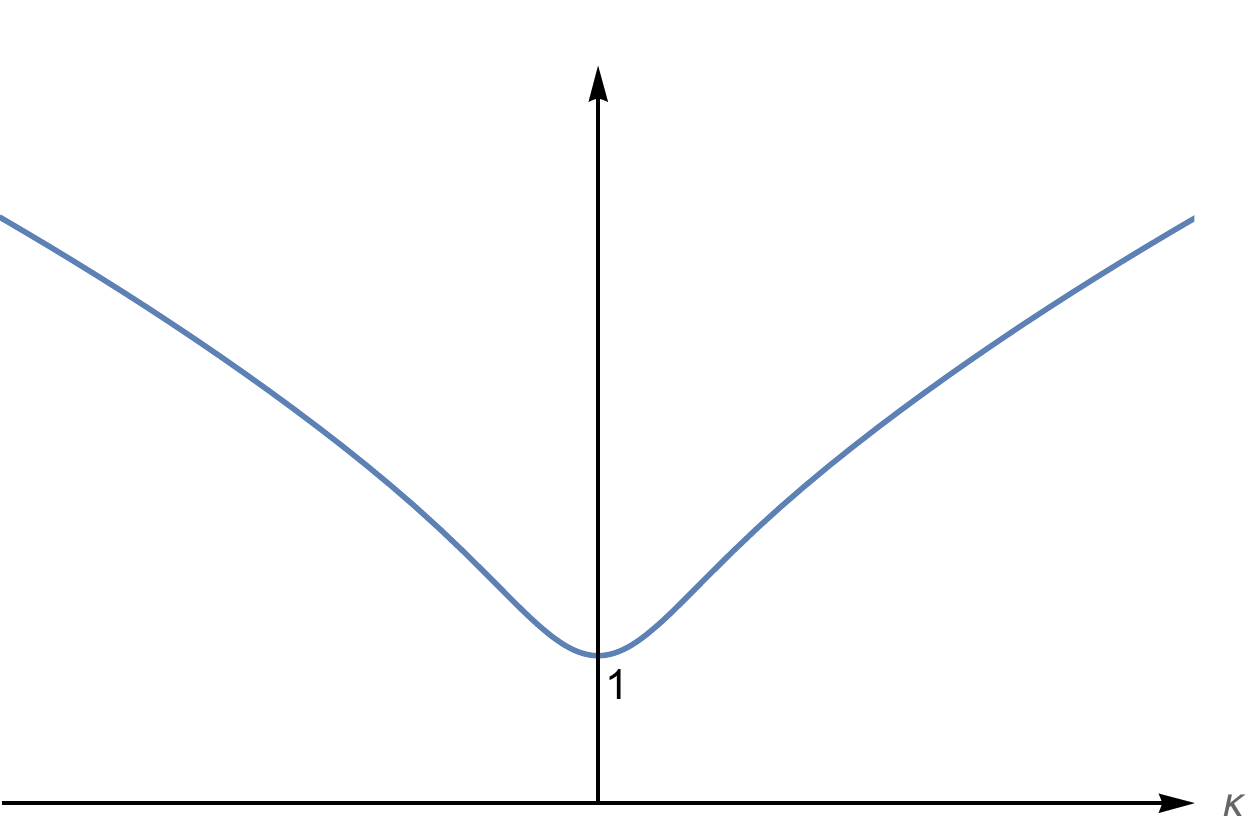}\qquad(b)~~\includegraphics[scale=0.4]{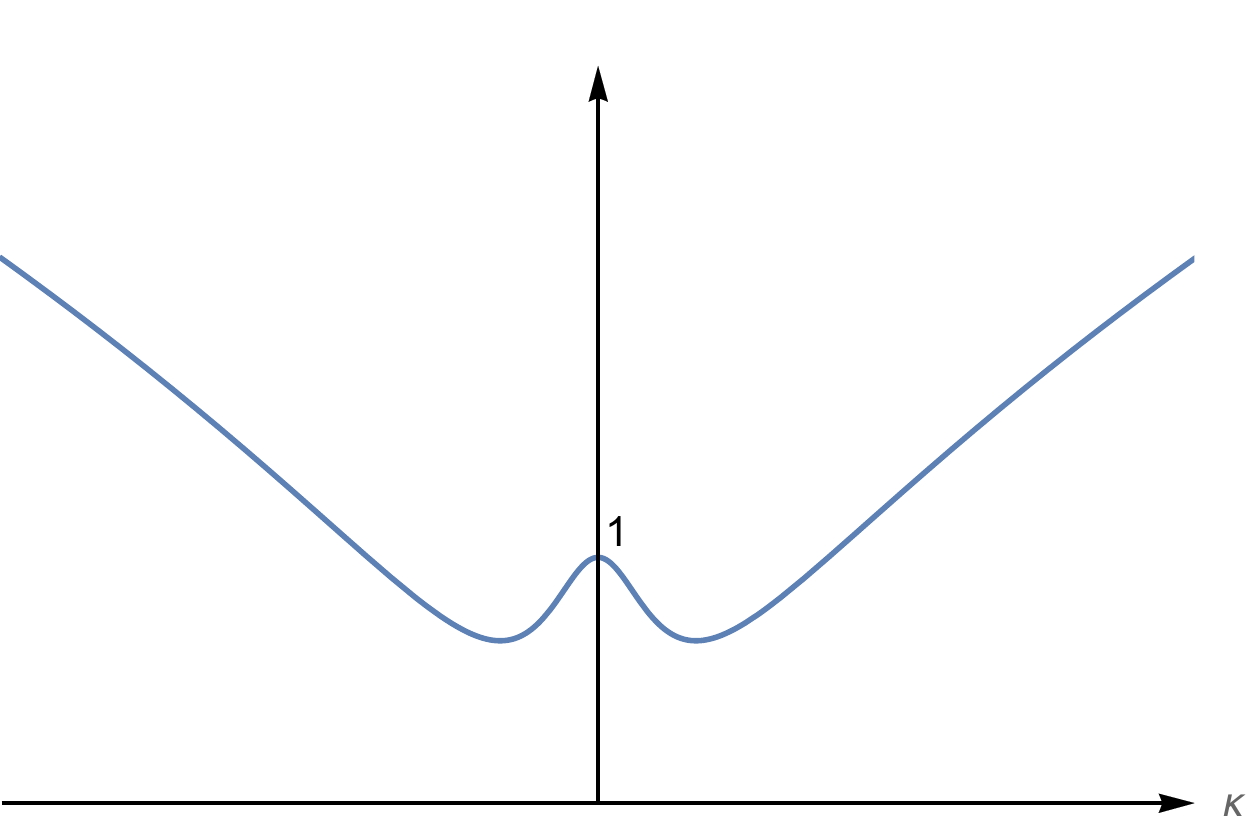}
\caption{Schematic plots of $\c(\cdot\,;T)$ when (a)~$T\geq1/3$ and (b)~$0<T<1/3$.}\label{fig:cT}
\end{figure}

For any $T>0$, since 
\[
\c^2(\k;T)=(1+T\k^2)\c^2(\k),
\] 
note from Section~\ref{ss:c} that $\c^2(\cdot\,;T)$ is even and real analytic, and $\c^2(0;T)=1$. Moreover, $\c^2(|\partial_x|;T)$ may be regarded equivalent to $1+|\partial_x|$ in the $L^2$-Sobolev space setting. In particular, $\c^2(|\partial_x|;T):H^{s+1}(\mathbb{R})\to H^{s}(\mathbb{R})$ for any $s\in\mathbb{R}$.

When $T\geq1/3$, note that $\c(\cdot\,;T)$ increases monotonically and unboundedly away from the origin. When $0<T<1/3$, on the other hand, $\c'(0;T)=0$, $\c''(0;T)<0$ and $\c(\k;T)\to\infty$ as $\k\to\infty$. Hence $\c(\cdot\,;T)$ possesses a unique minimum over the interval $(0,\infty)$; see Figure~\ref{fig:cT}. 

\subsection*{Well-posedness}

For any $T>0$, it follows from harmonic analysis techniques that the solution of the linear part of \eqref{E:main} and \eqref{def:cT} acquires a $1/4$ derivative of ``smoothness," compared to the initial datum. By the way, for $T=0$, the solution does not possess smoothing effects. Nevertheless, it seems difficult to work out the well-posedness in spaces of low regularities. But, for the present purpose, it suffices to solve the Cauchy problem in some functional analytic setting. In Appendix~\ref{sec:LWP}, we comment how to establish the local-in-time well-posedness for \eqref{E:main} and \eqref{def:cT} in $H^s(\mathbb{R})\times H^{s+1/2}(\mathbb{R})$ for any $s>2$. 

\subsection{Existence of sufficiently small, periodic wave trains}

Let $T>0$. We begin by discussing periodic wave trains of \eqref{E:main} and \eqref{def:cT}. That is, $\eta$ and $u$ are $2\pi$ periodic functions of $z:=\k(x-ct)$ for some $\k>0$, the wave number, for some $c>0$, the wave speed, and they solve
\begin{equation}\label{E:periodic5}
\begin{aligned}
&-c\eta+u+u\eta=(1-c^2)b_1,\\
&-cu+\c^2(\k|\partial_z|;T)\eta+\frac12u^2=(1-c^2)b_2
\end{aligned}
\end{equation}
for some $b_1$, $b_2\in\mathbb{R}$; compare \eqref{E:periodic}. For any $T>0$, note that
\begin{equation}\label{E:c-bound5}
\c^2(\k|\partial_z|;T):H^{k+1}(\mathbb{T})\to H^{k}(\mathbb{T})
\qquad\text{for any $\k>0$\quad for any integer $k\geq 0$}.
\end{equation}
Note that
\begin{equation}\label{def:ck5}
\c^2(\k|\partial_z|;T)e^{inz}=\c^2(n\k;T)e^{inz}\qquad\text{for $n\in\mathbb{Z}$};
\end{equation}
compare \eqref{E:c-bound} and \eqref{def:ck}.

Here the existence proof follows along the same line as that in Section~\ref{sec:existence}. Hence we merely hit the main points. We use \eqref{def:uv} whenever it is convenient to do so.

\begin{lemma}[Regularity]\label{lem:regularity5}
For any $T>0$, if $\eta$, $u\in H^1(\mathbb{T})$ solve \eqref{E:periodic5} for some $c>0$, and $\k>0$, $b_1$, $b_2\in\mathbb{R}$ and if $1-\|\eta\|_{L^\infty(\mathbb{T})}\geq\epsilon>0$ for some $\epsilon$ then $\eta$, $u\in H^\infty(\mathbb{T})$.
\end{lemma}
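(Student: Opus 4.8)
The plan is to mimic the proof of Lemma~\ref{lem:regularity}, adapting it to the capillary dispersion relation \eqref{def:cT}. The essential difference is that $\c^2(\k|\partial_z|;T)$ is now a \emph{smoothing} operator of order $-1$ only in the gravity case $T=0$; for $T>0$ it behaves like $1+|\partial_z|$ by the remark preceding the lemma, so it is bounded $H^{k+1}(\mathbb{T})\to H^k(\mathbb{T})$ (see \eqref{E:c-bound5}) but \emph{loses} a derivative rather than gaining one. Consequently the bootstrap has to be closed through the \emph{first} equation of \eqref{E:periodic5} rather than the second, which explains why the regularity hypothesis is now phrased in terms of $1-\|\eta\|_{L^\infty(\mathbb{T})}$ instead of $c-\|u\|_{L^\infty(\mathbb{T})}$.

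First I would differentiate \eqref{E:periodic5} with respect to $z$, exactly as in the proof of Lemma~\ref{lem:regularity}, to obtain
\[
-c\eta'+u'+u\eta'+u'\eta=0\quad\text{and}\quad -cu'+\c^2(\k|\partial_z|;T)\eta'+uu'=0,
\]
and then solve the \emph{first} equation for $u'$, giving $u'=\dfrac{c-u}{1+\eta}\,\eta'$, which is legitimate because $1+\eta$ is bounded away from zero by hypothesis, so that $\tfrac{1}{1+\eta}:H^1(\mathbb{T})\to H^1(\mathbb{T})$ and multiplication by $c-u\in H^1(\mathbb{T})$ preserves $H^1(\mathbb{T})$. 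Substituting this into the second equation yields a closed equation for $\eta$,
\[
\c^2(\k|\partial_z|;T)\eta'=c u'-uu'=(c-u)\,\frac{c-u}{1+\eta}\,\eta',
\]
whose right-hand side lies in $L^2(\mathbb{T})$ since $\eta'\in L^2(\mathbb{T})$ by hypothesis and the prefactor $(c-u)^2/(1+\eta)$ is in $H^1(\mathbb{T})\subset L^\infty(\mathbb{T})$. Now I would invert $\c^2(\k|\partial_z|;T)$: because $\c^2(n\k;T)\geq 1/(C(1+|n|\k))$ times a polynomial growth — more precisely $\c^2(n\k;T)\asymp 1+|n|$ — the operator $\c^2(\k|\partial_z|;T)^{-1}$ is bounded $H^k(\mathbb{T})\to H^{k+1}(\mathbb{T})$, so $\eta'\in H^1(\mathbb{T})$, i.e.\ $\eta\in H^2(\mathbb{T})$. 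Feeding this back into $u'=\tfrac{c-u}{1+\eta}\eta'$ and using that $H^1(\mathbb{T})$ is an algebra gives $u'\in H^1(\mathbb{T})$, hence $u\in H^2(\mathbb{T})$. A standard bootstrap — each improvement of $\eta$ by one derivative improves $u$ by one derivative via the first equation, and each improvement of $u$ improves $\eta$ via the inverted second equation — then yields $\eta,u\in H^\infty(\mathbb{T})$.

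The main obstacle, and the reason one cannot simply copy Lemma~\ref{lem:regularity} verbatim, is the direction of the derivative count: with $T>0$ the operator $\c^2(\k|\partial_z|;T)$ is no longer a gain of regularity but a loss, so one must \emph{invert} it rather than apply it. This forces the roles of the two equations of \eqref{E:periodic5} to be swapped relative to the $T=0$ argument, and it is exactly this that requires $1+\eta$ (equivalently $1-\|\eta\|_{L^\infty(\mathbb{T})}\geq\epsilon$) to be bounded away from zero, so that $\eta'=(1+\eta)^{-1}(c-u)u'$ and its inverse $u'=(c-u)(1+\eta)^{-1}\eta'$ make sense in the relevant Sobolev spaces. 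Once the correct algebraic rearrangement and the mapping property $\c^2(\k|\partial_z|;T)^{-1}:H^k(\mathbb{T})\to H^{k+1}(\mathbb{T})$ are in place, the bootstrap is routine and identical in spirit to the gravity case.
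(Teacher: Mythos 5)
Your proof is correct and follows essentially the same route as the paper's: both hinge on inverting $1+\eta$ (legitimate by the hypothesis $1-\|\eta\|_{L^\infty(\mathbb{T})}\geq\epsilon$) and on the lower bound $\c^2(n\k;T)\gtrsim 1+|n|$, which makes $\c^2(\k|\partial_z|;T)^{-1}$ gain one derivative, with the roles of the two equations swapped relative to the $T=0$ case exactly as you describe. The only cosmetic difference is that the paper does not differentiate: it rearranges the undifferentiated system \eqref{E:periodic5} as $u=\frac{1}{1+\eta}(c\eta+(1-c^2)b_1)$ and $\c^2(\k|\partial_z|;T)\eta=cu-\frac12u^2+(1-c^2)b_2$ and bootstraps from there.
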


\begin{proof} 
We rearrange \eqref{E:periodic5} as 
\begin{equation}\label{E:hu'5}
u=\frac{1}{1+\eta}(c\eta+(1-c^2)b_1)\quad\text{and}\quad \c^2(\k|\partial_z|)\eta=cu-\frac12u^2+(1-c^2)b_2.
\end{equation}
Since $u\in H^1(\mathbb{T})$ by hypothesis, it follows from the latter equation of \eqref{E:hu'5}, a Sobolev inequality, and \eqref{E:c-bound5} that $\eta\in H^2(\mathbb{T})$. Since $\frac{1}{1+\eta}:H^2(\mathbb{T})\to H^2(\mathbb{T})$ by hypothesis, it follows from the former equation of \eqref{E:hu'5} that $u\in H^2(\mathbb{T})$. A bootstrap argument then completes the proof. Compare Lemma~\ref{lem:regularity} and the proof.
\end{proof}

For any $T>0$, let (by abuse of notation) $\f: H^1(\mathbb{T}) \times H^1(\mathbb{T}) \times \mathbb{R}^+ \times \mathbb{R}^+\times \mathbb{R}\times \mathbb{R} \to L^2(\mathbb{T}) \times L^2(\mathbb{T})$ such that
\[
\f(\u,c;T,\k,b_1,b_2)=
\begin{pmatrix}-c\eta+u+u\eta-(1-c^2)b_1 \\ -cu+\c^2(\k|\partial_z|;T)\eta+\frac12u^2-(1-c^2)b_2\end{pmatrix};
\]
compare \eqref{def:f}. It is well defined by \eqref{E:c-bound5} and a Sobolev inequality. We seek a solution $\u\in H^1(\mathbb{T})\times H^1(\mathbb{T})$, $c>0$, and $\k>0$, $b_1$, $b_2\in\mathbb{R}$ of 
\[
\f(\u,c;T,\k,b_1,b_2)=\mathbf{0}
\] 
satisfying $1-\|\eta\|_{L^\infty(\mathbb{T})}\geq\epsilon>0$ for some $\epsilon$ and, by virtue of Lemma~\ref{lem:regularity5}, a solution $\u\in H^\infty(\mathbb{T})\times H^\infty(\mathbb{T})$ of \eqref{E:periodic5}. We may repeat the argument in Section~\ref{ss:operator} to verity that $\f$ is a real analytic operator.

For any $T>0$, for any $c>0$, $\k>0$, $b_1$, $b_2\in\mathbb{R}$ and $|b_1|$, $|b_2|$ sufficiently small, note that $\u_0:=\begin{pmatrix} \eta_0 \\ u_0 \end{pmatrix}(c;T,\k,b_1,b_2)$ makes a constant solution of $\f(\u,c;T,\k,b_1,b_2)=\mathbf{0}$ and, hence, \eqref{E:periodic5}, where $\eta_0$ and $u_0$ are in \eqref{E:hu0'}. It follows from the implicit function theorem that if non-constant solutions bifurcate from $\u=\u_0$ for some $c=c_0$ then, necessarily, (by abuse of notation)
\[
\mathbf{L}_0:=\partial_\u\f(\u_0,c_0;T,\k,b_1,b_2):
H^1(\mathbb{T})\times H^1(\mathbb{T})\to L^2(\mathbb{T})\times L^2(\mathbb{T})
\]
is not an isomorphism. This is not in general a sufficient condition, but note from Section~\ref{sec:existence} that bifurcation does take place, provided that the kernel of $\mathbf{L}_0$ is two dimensional. Note that 
\[
\mathbf{L}_0\u_1e^{inz}=
\begin{pmatrix} u_0-c_0 & 1+\eta_0 \\ \c^2(\k|\partial_z|;T) & u_0-c_0 \end{pmatrix}\u_1e^{inz}
\qquad\text{for $n\in\mathbb{Z}$}
\]
for some nonzero $\u_1$ if and only if 
\[
(c_0-u_0)^2=\c^2(n\k;T)(1+\eta_0);
\] 
compare \eqref{E:bifurcation}. For $b_1=b_2=0$ and, hence, $\eta_0=u_0=0$ by \eqref{E:hu0'}, it simplifies to $c_0=\pm\c(n\k;T)$. Without loss of generality, we restrict the attention to $n=1$ and we assume the $+$ sign. For $|b_1|$ and $|b_2|$ sufficiently small, we then make an explicit calculation to find \eqref{E:hu0}, where $\c(\cdot\,;T)$ replaces $\c$. 

When $T\geq1/3$, since $\c(\k;T)<\c(n\k;T)$ for any $n=2,3,\dots$ pointwise in $\mathbb{R}$ (see Figure~\ref{fig:cT}a), a straightforward calculation reveals that for any $\k>0$, $b_1$, $b_2\in\mathbb{R}$ and $|b_1|$, $|b_2|$ sufficiently small, the $H^1(\mathbb{T})\times H^1(\mathbb{T})$ kernel of $\mathbf{L}_0=\partial_\u\f(\u_0,c_0;T,\k,b_1,b_2)$ is two dimensional and spanned by $\u_1e^{\pm iz}$, where $\u_1$ is in \eqref{E:hu1} and $\c(\cdot\,;T)$ replaces $\c$. Hence, non-constant solutions bifurcate from $\u=\u_0$ and $c=c_0$. 

When $0<T<1/3$, on the other hand, for any integer $n\geq 2$, it is possible to find some $\k$ such that $\c(\k;T)=\c(n\k;T)$ (see Figure~\ref{fig:cT}b). If $\c(\k;T)\neq\c(n\k;T)$ for any $n=2,3,\dots$ then the kernel of $\mathbf{L}_0$ is likewise two dimensional. Hence, non-constant solutions bifurcate from $\u=\u_0$ and $c=c_0$. But if $\c(\k;T)=\c(n\k;T)$ for some integer $n\geq 2$, resulting in the resonance of the fundamental mode and the $n$-th harmonic, then the kernel is four dimensional.  

To compare, for $T=0$, recall that $\c(\k;0)<\c(n\k;0)$ for any $n=2,3,\dots$ pointwise in $\mathbb{R}$ (see Figure~\ref{fig:c}). Hence, for any $\k>0$, $b_1$, $b_2\in\mathbb{R}$ and $|b_1|$, $|b_2|$ sufficiently small, the kernel is two dimensional.

To proceed, for any $T>0$, for any $\k>0$ satisfying 
\begin{equation}\label{E:k-condition}
\c(\k;T)\neq \c(n\k;T), \qquad n=2,3,\dots,
\end{equation}
$b_1$, $b_2\in\mathbb{R}$ and $|b_1|$, $|b_2|$ sufficiently small, we may repeat the Lyapunov-Schmidt procedure in Section~\ref{ss:LS} to establish that a one parameter family of solutions of \eqref{E:periodic5} exists, denoted (by abuse of notation) $\eta(a;T,\k,b_1,b_2)(z)$, $u(a;T,\k,b_1,b_2)(z)$, and $c(a;T,\k,b_1,b_2)$, near $\eta_0(T,\k,b_1,b_2)$, $u_0(T,\k,b_1,b_2)$, and $c_0(T,\k,b_1,b_2)$, for $a\in\mathbb{R}$ and $|a|$ sufficiently small. Note that $\eta$ and $u$ are $2\pi$ periodic and even in $z$, and they belong to $H^\infty(\mathbb{T})$. Note that $\eta$, $u$, and $c$ depend analytically on $a$, and $\k$, $b_1$, $b_2$. Moreover, we may repeat the small amplitude expansion in Section~\ref{ss:small} to verify \eqref{E:hu-small} and \eqref{E:hu0} as $a$, $b_1$, $b_2\to 0$, where $\c(\cdot\,;T)$ replaces $\c$. We omit the details. 

If $\c(\k;T)=\c(n\k;T)$ for some integer $n\geq 2$ for some $\k>0$ then the proof in Section~\ref{sec:existence} breaks down. Nevertheless, one may employ the Lyapunov-Schmidt procedure in \cite{Jones}, for instance, to prove the existence of sufficiently small, periodic wave trains of \eqref{E:main} and \eqref{def:cT}. But the modulational instability calculation becomes tedious, involving $6\times 6$ matrices. We do not discuss the details. 

\subsection{Modulational stability and instability}

Let $T>0$. Let $\eta=\eta(a;T,\k,0,0)$, $u=u(a;T,\k,0,0)$, and $c=c(a;T,\k,0,0)$, for some $a\in\mathbb{R}$ and $|a|$ sufficiently small for some $\k>0$ satisfying \eqref{E:k-condition}, denote a $2\pi/\k$-periodic wave train of \eqref{E:main} and \eqref{def:cT} near the rest state, whose existence follows from the previous subsection. We turn the attention to its modulational stability and instability. Recall from Section~\ref{ss:MI} that the modulational instability means that the $L^2(\mathbb{T}) \times L^2(\mathbb{T})$ spectra of (by abuse of notation)
\[
\L(\xi)(a;T,\k,0,0):=
e^{-i\xi z}\partial_z \begin{pmatrix}c-u &-1-\eta \\ -\c^2(\k|\partial_z|;T) & c-u \end{pmatrix}(a;T,\k,0,0)e^{i\xi z}
\]
are not contained in the imaginary axis in the vicinity of the origin for $\xi>0$ and small.

Here the modulational stability and instability proof follows along the same line as that in Section~\ref{sec:MI}. Hence we merely hit the main points. In the sequel, $\k>0$ satisfying \eqref{E:k-condition} is suppressed for simplicity of notation, unless specified otherwise. We use the notation of \eqref{def:Lxa}. 

For any $T>0$ for $a=0$, a straightforward calculation reveals that (by abuse of notation)
\[
\mathcal{L}(\xi,0)\e(n+\xi,\pm;T)=i\omega(n+\xi,\pm;T)\e(n+\xi,\pm;T)
\qquad\text{for $n\in\mathbb{Z}$ and $\xi\in[0,1/2]$},
\]
where 
\[
\omega(n+\xi,\pm;T)=(n+\xi)(\c(\k;T)\pm\c(\k(n+\xi);T))
\]
and $\e(n+\xi,\pm;T)(z)=\begin{pmatrix}1\\ \mp\c(\k(n+\xi);T)\end{pmatrix}e^{inz}$; compare \eqref{E:Lx0=0} and \eqref{def:eigen}. Note that 
\[
\omega(0,+;T)=\omega(0,-;T)=\omega(1,-;T)=\omega(-1,-;T)=0.
\]
Since $\c(\k;T)\neq\c(n\k;T)$ for any $n=2,3,\dots$ by hypothesis, a straightforward calculation reveals that zero is an $L^2(\mathbb{T})\times L^2(\mathbb{T})$ eigenvalue of $\L(0,0)$ with algebraic and geometric multiplicity four. Moreover, \eqref{def:p00} makes the associated eigenfunctions, where $\c(\cdot\,;T)$ replaces $\c$. 

For $\xi=0$ for $a\in\mathbb{R}$ and $|a|$ sufficiently small, one may repeat the proof of Lemma~\ref{lem:L} to establish that zero is an $L^2(\mathbb{T})\times L^2(\mathbb{T})$ eigenvalue of $\L(0,a)$ with algebraic multiplicity four and geometric multiplicity three. Moreover, \eqref{def:p0a} makes the associated eigenfunctions, where $\c(\cdot\,;T)$ replaces $\c$. We omit the details.

For any $T>0$, for $\xi>0$, $a\in\mathbb{R}$ and $\xi$, $|a|$ sufficiently small, one may then proceed as in Section~\ref{ss:perturbation} and calculate \eqref{def:L3} and \eqref{def:I3} up to terms of orders of $\xi^2$, $\xi a$, and $a$, where $\p_1$, $\p_2$, $\p_3$, $\p_4$ are in \eqref{def:p} but $\c(\cdot\,;T)$ replaces $\c$. It follows from perturbation theory (see \cite[Section~4.3.5]{K}, for instance, for details) that the roots of $\det(\mathbf{L}-\lambda\mathbf{I})(\xi,a)$ coincide with the $L^2(\mathbb{T})\times L^2(\mathbb{T})$ spectrum of $\L(\xi, a)$ up to terms of orders $\xi^2$ and $a$ as $\xi$, $a\to 0$. We then repeat the argument of Section~\ref{ss:index} and derive a modulational instability index for \eqref{E:main} and \eqref{def:cT}. 

\begin{theorem}[Modulational instability index]\label{thm:indexT}
For any $T>0$ and $T\neq 1/3$ for any $\k>0$ satisfying $\cc(\k;T)\neq \cc(n\k;T)$ for $n=2,3,\dots$, a sufficiently small, $2\pi/\k$-periodic wave train of \eqref{E:main} and \eqref{def:cT} is modulationally unstable, provided that
\[
\Delta(\k;T):=\Big(\frac{i_1i_2}{i_3}i_4\Big)(\k;T)<0,
\]
where
\begin{align*}
i_1(\k;T)=&(\k\cc(\k;T))'', \\
i_2(\k;T)=&((\k\cc(\k;T))')^2-1, \\
i_3(\k;T)=&\cc^2(\k;T)-\cc^2(2\k;T), 
\intertext{and}
i_4(\k;T)=&3\cc^2(\k;T)+5\cc^4(\k;T)-2\cc^2(2\k;T)(\cc^2(\k;T)+2) \\
&+18\k(\cc^3\cc')(\k;T)+\k^2(\cc')^2(\k;T)(5\cc^2(\k;T)+4\cc^2(2\k;T));\hspace*{-20pt}
\end{align*}
$\cc(\k,T)$ is in \eqref{def:cT}. It is spectrally stable to square integrable perturbations in the vicinity of the origin otherwise. 
\end{theorem}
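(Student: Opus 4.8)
The plan is to run the argument of Section~\ref{sec:existence} and Section~\ref{sec:MI}, which produced Theorem~\ref{thm:index}, verbatim with $\c$ replaced everywhere by $\c(\cdot\,;T)$ of \eqref{def:cT}. By the previous subsection, for any $T>0$ and any $\k>0$ with $\c(\k;T)\neq\c(n\k;T)$ for $n=2,3,\dots$, a one parameter family of sufficiently small, $2\pi/\k$-periodic wave trains $\eta(a;T,\k,0,0)$, $u(a;T,\k,0,0)$, $c(a;T,\k,0,0)$ exists and satisfies \eqref{E:hu-small} and \eqref{E:hu0} with $\c(\cdot\,;T)$ in place of $\c$; repeating the proof of Lemma~\ref{lem:L}, zero is an $L^2(\mathbb{T})\times L^2(\mathbb{T})$ eigenvalue of $\L(0,a)$ with algebraic multiplicity four and geometric multiplicity three, with eigenfunctions \eqref{def:p0a} after the same substitution. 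The no-harmonic-resonance hypothesis is precisely what keeps the kernel of $\mathbf{L}_0$ two dimensional at the bifurcation point and $h_0$, $h_2$ of \eqref{def:h02} finite at the orders needed; when it fails one gets a four dimensional kernel and a $6\times6$ reduction, which lies outside the scope of the theorem.

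With the waves in hand I would form $\L(\xi,a):=\L(\xi)(a;T,\k,0,0)$, make the Baker--Campbell--Hausdorff expansion of Section~\ref{ss:perturbation} so that $\L(\xi,a)=\M-a\partial_z\mathbf{C}_0\cos z-i\xi a\,\mathbf{C}_0\cos z+O(\xi^3+\xi^2a+a^2)$ with $\M$ agreeing with $\L(\xi,0)$ to order $\xi^2$, and take the eigenfunctions $\p_1$, $\p_2$, $\p_3$, $\p_4$ from \eqref{def:p} with $\c(\cdot\,;T)$ substituted; these again span the eigenspace of the four eigenvalues of $\L(\xi,a)$ near the origin up to orders $\xi^2$ and $a$. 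I would then compute the reduced matrices $\mathbf{L}(\xi,a)$ and $\mathbf{I}(\xi,a)$ of \eqref{def:L3} and \eqref{def:I3}. The key structural point is that every entry is assembled from $\c(\k;T)$, $\c(2\k;T)$, $\c'(\k;T)$, $\c''(\k;T)$ and from $h_0$, $h_2$, so $\mathbf{L}(\xi,a)$ and $\mathbf{I}(\xi,a)$ take exactly the forms \eqref{E:L3} and \eqref{E:I3} with $\c\mapsto\c(\cdot\,;T)$. The characteristic polynomial $p(\lambda)(\xi,a;\k)$, its rescaling $q$, and the discriminants $\Delta_0$, $\Delta_1$, $\Delta_2$ are then produced symbolically exactly as in Section~\ref{ss:index}, now carrying the extra parameter $T$; extracting the $a^2$-coefficient of $\Delta_0$ yields $\Delta(\k;T)=\frac{i_1i_2}{i_3}i_4(\k;T)$ with $i_1,\dots,i_4$ as stated. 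A useful check is that $h_0$ and $h_2$ must drop out of $\Delta(\k;T)$ up to the factor $i_3$, just as at $T=0$; this is also why $\c(\k;T)=1$, which can occur for $0<T<1/3$ and would make $h_0$ blow up, is harmless, since $\Delta(\k;T)$ cleared of the $i_3$ denominator is polynomial in the values of $\c(\cdot\,;T)$ and its derivatives and extends analytically across that locus.

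Finally I would redo the sign analysis of Section~\ref{ss:index}, but without the free sign information $i_1<0$, $i_2<0$, $i_3>0$ that was available at $T=0$. One checks, as there, that $\Delta_0(\xi,0;\k;T)=4\xi^2\k^2 i_1(\k;T)^2 i_2(\k;T)^4+O(\xi^4)$, which is nonnegative for $\xi$ small and strictly positive whenever $i_1i_2\neq0$, and that $\Delta_1(\xi,0;\k;T)<0$, $\Delta_2(\xi,0;\k;T)<0$ for $\xi$ small (these depend only on $(\k\c(\k;T))'$, hence stay negative for all admissible $T$, $\k$). If $\Delta(\k;T)<0$ then necessarily $i_1i_2\neq0$, so $\Delta_0(\xi,0;\k;T)=O(\xi^2)>0$, and choosing $\xi>0$ small enough that $a^2\Delta(\k;T)$ dominates gives $\Delta_0(\xi,a;\k;T)<0$ while $\Delta_1$, $\Delta_2<0$, so $q$ has two real and two complex roots: the modulational instability. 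If $\Delta(\k;T)>0$ then $\Delta_0(\xi,a;\k;T)>0$ with $\Delta_1$, $\Delta_2<0$, the roots of $q$ are real, and together with the short wavelength stability away from $\xi=0$ obtained as in Section~\ref{ss:a}, $\u$ is spectrally stable in the vicinity of the origin. The hypothesis $T\neq1/3$ enters here: it excludes the degenerate balance $\c''(0;T)=0$, under which the leading-order discriminant computation above is no longer the effective one, and a separate higher-order analysis would be required.

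I expect the main obstacle to be the symbolic computation rather than any new idea: re-running the $4\times4$ reduction, the quartic discriminant, and the extraction of $\Delta(\k;T)$ with the additional parameter $T$ is bulky, and one must verify carefully that the intermediate quantities $h_0$, $h_2$ (and, for $0<T<1/3$, the locus $\c(\k;T)=1$) do not genuinely obstruct the reduction, which is handled by the analyticity observation above. A secondary point is bookkeeping the hypotheses: unlike the gravity case, none of the sign facts on $i_1$, $i_2$, $i_3$ are available to shortcut the conclusion, so one must keep explicit track of exactly where $\c(\k;T)\neq\c(n\k;T)$ and $T\neq1/3$ are used.
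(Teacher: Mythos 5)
Your proposal follows the paper's own route essentially verbatim: the paper establishes existence under the non-resonance hypothesis $\c(\k;T)\neq\c(n\k;T)$, repeats Lemma~\ref{lem:L} and the spectral perturbation of Section~\ref{ss:perturbation} with $\c(\cdot\,;T)$ in place of $\c$, and then notes that the derivation of the index is ``nearly identical'' to that of Theorem~\ref{thm:index}, which is exactly your plan. Your added remarks on where $T\neq1/3$ enters and on the analytic continuation of the index across the locus $\c(\k;T)=1$ (where $h_0$ degenerates) are sensible refinements consistent with, and slightly more careful than, the paper's sketch.
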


The proof is nearly identical to that of Theorem~\ref{thm:index}. Hence we omit the details. If $T=1/3$ then the result becomes inconclusive.

Theorem~\ref{thm:indexT} elucidates four resonance mechanisms which contribute to the sign change in $\Delta$ and, ultimately, the change in the modulational stability and instability for \eqref{E:main} and \eqref{def:cT}. When $T=0$, note that $\Delta(\k;0)$ becomes \eqref{def:ind}. But for $T>0$, several differences are present. For instance, $i_2(\k;0)<0$ for any $\k>0$, but the effects of surface tension are to increase the group velocity, and they may do so to the extent that $i_2$ changes the sign. 


When $T>1/3$, since $\c(\k;T)$ and $(\k\c(\k;T))'$ increase monotonically over the interval $(0,\infty)$ and since $(\k\c(\k;T))'$ does not possess an extremum by brutal force, $i_1$, $i_2$, $i_3$ in Theorem~\ref{thm:indexT} do not vanish over the interval $(0,\infty)$. Moreover, a numerical evaluation reveals that $i_4$ changes the sign once over the interval $(0,\infty)$. Together, a sufficiently small, periodic wave train of \eqref{E:main} and \eqref{def:cT} is modulationally unstable, provided that the wave number is greater than a critical value, and it is modulationally stable otherwise; compare Corollary~\ref{cor:kc}. Furthermore, a numerical evaluation reveals that the critical wave number $\k_c(T)$, say, satisfies 
\[
\lim_{T\to \infty}\sqrt{T}\k_c(T)\approx 1.054.
\]

When $0<T<1/3$, on the other hand, a straightforward calculation reveals that $(\k\c(\k;T))'$ achieves a unique minimum over the interval $(0,\infty)$. Moreover, $(\k\c(\k;T))'=1$ and $\c(\k;T)=\c(2\k;T)$ each takes one transvere root over the interval $(0,\infty)$. Hence, $i_1$ through $i_4$ each contributes to the change in the modulational stability and instability.

\begin{figure}[h]
\includegraphics[scale=0.75]{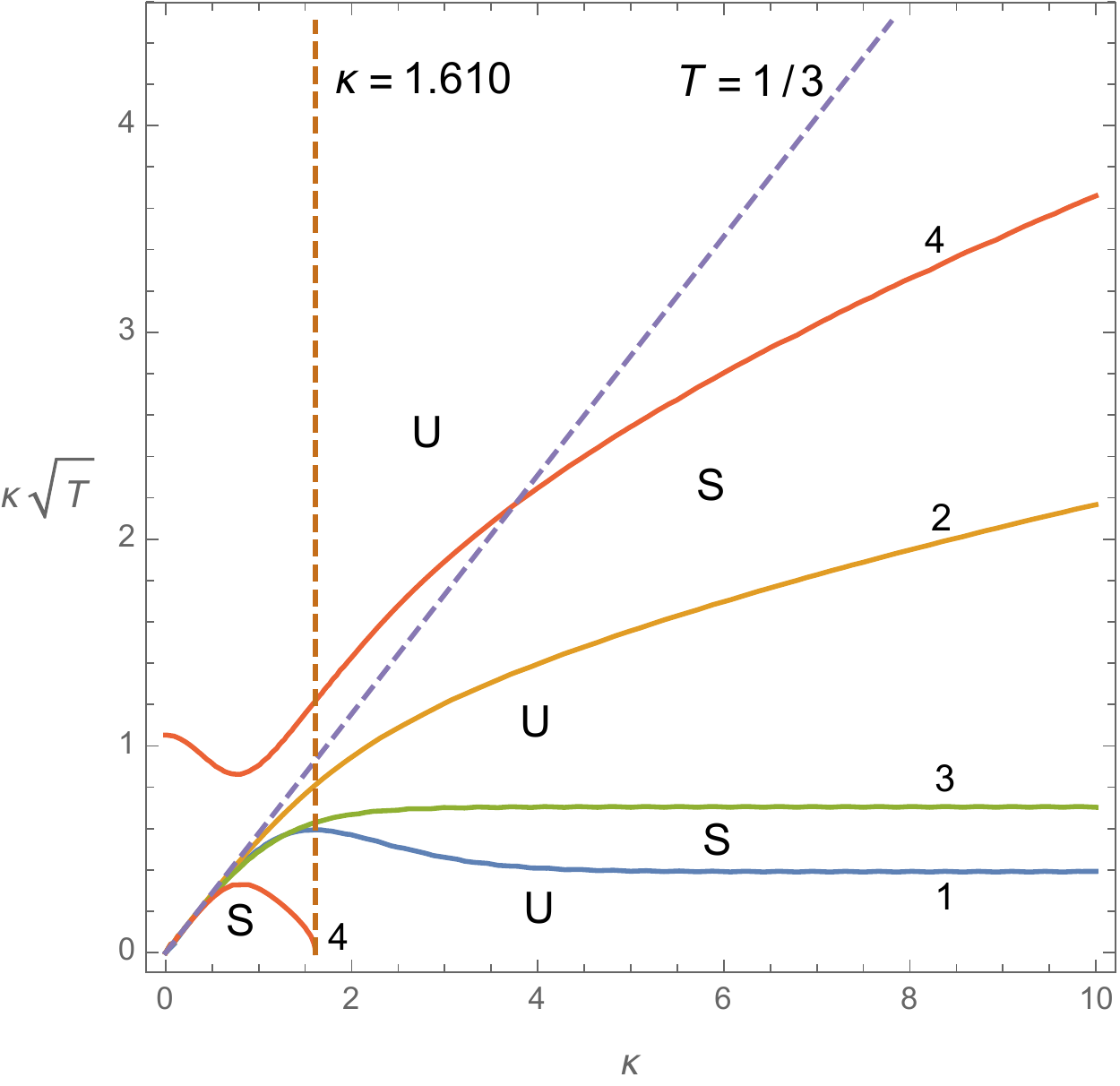}
\caption{Stability diagram for sufficiently small, periodic wave trains of \eqref{E:main} and \eqref{def:cT}. To interpret, for any $T> 0$, one must envision a line through the origin with slope $T$.  ``S" and ``U" denote stable and unstable regions. Solid curves represent roots of the modulational instability index and are labeled according to their mechanism.}\label{fig:ST}
\end{figure}

Figure~\ref{fig:ST} illustrates in the $\k$ versus $\k\sqrt{T}$ plane the regions where a sufficiently small, periodic wave train of \eqref{E:main} and \eqref{def:cT} is modulationally stable and unstable. Along Curve~1, $i_1(\k;T)=0$ and the group speed achieves an extremum at the wave number $\k$. Curve~2 is associated with $i_2(\k;T)=0$, along which the group speed coincides with the phase speed in the long wave limit as $\k\to0$, resulting in the resonance of short and long waves. In the deep water limit, as $\k\to\infty$ while $\k\sqrt{T}$ is fixed, it is asymptotic to $\k=\frac94\k^2T-\frac34$. Curve~3 is associated with $i_3(\k;T)=0$, along which the phase speeds of the fundamental mode and the second harmonic coincide, resulting in the second harmonic resonance. In the deep water limit, it is asymptotic to $k^2T=\frac12$. Moreover along Curve~4, $i_4$ vanishes as a result of a rather complicated balance of the dispersion and nonlinear effects. The ``lower" branch of Curve~4 passes through $\k=1.160\dots$, the critical wave number when $T=0$; see Corollary~\ref{cor:kc}. The ``upper" branch passes through $\k\sqrt{T}=1.054\dots$, the large surface tension limit in \cite{Kawahara}, for instance.

The result qualitatively agrees with those in \cite{Kawahara} and \cite{DR}, for instance, from formal asymptotic expansions for the water wave problem. To compare, the Whitham equation (see \eqref{E:Whitham}) in the presence of the effects of surface tension fails to predict the critical wave number in the large surface tension limit; see \cite{HJ3}, for instance, for details. Perhaps, this is because the Whitham equation neglects many ``higher order" nonlinearities of the physical problem. It is fortuitous that the full-dispersion shallow water equations proposed herein includes more physically realistic nonlinearities to predict all resonances in gravity capillary waves. 

Note that along Curve~3, the modulational instability index becomes singular. 
It is interesting to justify the resonant interactions in \cite{MG1970a, MG1970b}, for instance, from formal asymptotic expansions for the physical problem.

\subsection*{Acknowledgements}

The authors thank Bernard Deconinck, Mariana Haragus, Mathew Johnson, Hanrik Kalisch, and Olga Trichenko for helpful discussions. 

VMH is supported by the National Science Foundation grant CAREER DMS-1352597, an Alfred P. Sloan research fellowship, the Arnold O. Beckman research award RB14100 of the Office of the Vice Chancellor for Research and a Beckman fellowship of the Center for Advanced Study at the University of Illinois at Urbana-Champaign. AKP is supported by CAREER DMS-1352597 and RB14100.

\begin{appendix}

\section{Well-posedness}\label{sec:LWP}

We discuss the solvability of the Cauchy problem associated with \eqref{E:main}-\eqref{def:c} or, equivalently,
\begin{equation}\label{E:main'}
\begin{aligned}
&\partial_t\eta+\partial_xu+\partial_x(u\eta)=0,\\
&\partial_tu-\mathcal{H}\eta+\gamma^2(|\partial_x|)\eta+u\partial_xu=0.
\end{aligned}
\end{equation}
For $v\in L^2(\mathbb{R})$, the Hilbert transform of $v$ is written $\mathcal{H}v$ and defined in the Fourier space as 
\[
\widehat{\mathcal{H}v}(\k)=-i(\text{sgn}\,\k)\widehat{v}(\k).
\] 
Since 
\[
|\text{sgn}\,\k-\tanh\k|\leq e^{-|\k|}\qquad\text{pointwise in $\mathbb{R}$}
\]
by brutal force (see also \cite{Yosihara}, for instance), 
\begin{equation}\label{E:gamma}
\|\gamma^2(|\partial_x|)v\|_{H^s(\mathbb{R})}\leq C\|v\|_{L^2(\mathbb{R})}\qquad\text{for any $s\geq0$}
\end{equation}
for some constant $C>0$ independent of $v$. 

\begin{theorem}[Local-in-time well-posedness]\label{thm:LWP}
For any $s>2$ for any $\eta_0\in H^s(\mathbb{R})$ and $u_0\in H^{s+1/2}(\mathbb{R})$, a unique solution, denoted $\eta(t)=\eta(\cdot,t)$ and $u(t)=u(\cdot,t)$, of  \eqref{E:main'}-\eqref{E:gamma}, 
\[
\eta(\cdot,0)=\eta_0\quad\text{and}\quad u(\cdot,0)=u_0,
\] 
exists in $H^s(\mathbb{R})\times H^{s+1/2}(\mathbb{R})$ over the interval $[0,\t)$ for some $\t>0$. Moreover, $(\eta_0,u_0)\mapsto(\eta(t),u(t))$ is continuous for any $t\in[0,\t)$.
\end{theorem}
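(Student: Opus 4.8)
The plan is to establish local well-posedness for \eqref{E:main'}-\eqref{E:gamma} by a standard energy method combined with a parabolic regularization (artificial viscosity), treating the system as a quasilinear symmetric hyperbolic system perturbed by a bounded nonlocal term. The key structural observation is that the principal part of \eqref{E:main'} is the nonlinear shallow water operator, which is symmetrizable: writing $\u = (\eta,u)^{\mathsf{T}}$, the equation takes the form $\partial_t \u + A(\u)\partial_x\u = F(\u)$ where $A(\u)=\begin{pmatrix} u & 1+\eta \\ 1 & u\end{pmatrix}$ up to the nonlocal lower-order term, and $A(\u)$ is symmetrized by $S(\u)=\begin{pmatrix}1 & 0 \\ 0 & 1+\eta\end{pmatrix}$, which is positive definite precisely when $1+\eta>0$. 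The operator $-\mathcal H + \gamma^2(|\partial_x|)$ contributes only a term bounded on every $H^s(\mathbb R)$ by \eqref{E:gamma} (indeed $\gamma^2(|\partial_x|):L^2\to H^s$ and $-\mathcal H$ is an isometry on each $H^s$), so it is harmless for energy estimates and does not affect the local theory; the mismatch $u\in H^{s+1/2}$ versus $\eta\in H^s$ is consistent because $\partial_x(u\eta)$ and $u\partial_x u$ both land in the right space when $u$ carries the extra half-derivative and $\c^2(|\partial_x|)$ is smoothing by one derivative as recorded in Section~\ref{ss:c}.

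First I would set up the approximate problem: regularize by adding $\mu\partial_x^2$ to both equations (for $\mu>0$), which yields a semilinear parabolic system for which local existence and uniqueness in $H^s(\mathbb R)\times H^{s+1/2}(\mathbb R)$ follow from a contraction mapping argument using the heat semigroup and the algebra property of $H^s$ for $s>1/2$ together with the commutator/product estimates of Kato--Ponce type. Second, I would derive $\mu$-uniform a priori estimates: applying $\langle\partial_x\rangle^s$ to the $\eta$-equation and $\langle\partial_x\rangle^{s+1/2}$ to the $u$-equation, pairing against the symmetrizer-weighted inner product, and using the Kato--Ponce commutator estimate $\|[\langle\partial_x\rangle^s, f]\partial_x g\|_{L^2}\lesssim \|f\|_{H^s}\|g\|_{H^s}$ to control the quasilinear terms, one obtains $\frac{d}{dt}E(t)\le C\,P(E(t))$ for a polynomial $P$, where $E(t)\sim \|\eta(t)\|_{H^s}^2+\|u(t)\|_{H^{s+1/2}}^2$; the viscous terms contribute only favorably signed dissipation, and the nonlocal term is absorbed via \eqref{E:gamma}. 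This gives a time $\t>0$, independent of $\mu$, on which $E(t)$ stays bounded, and simultaneously the condition $1+\eta(t)>0$ persists (shrinking $\t$ if necessary, using $\|\eta\|_{L^\infty}\lesssim\|\eta\|_{H^s}$ with $s>1/2$), so the symmetrizer remains positive definite. Third, I would pass to the limit $\mu\to0^+$: uniform bounds give weak-$*$ convergence, and a standard Bona--Smith-type argument (or compactness in $C([0,\t];H^{s'})$ for $s'<s$ plus interpolation) upgrades this to a genuine solution in $C([0,\t];H^s)\cap C([0,\t];H^{s+1/2})$ for the two components respectively. Uniqueness and continuous dependence $(\eta_0,u_0)\mapsto(\eta(t),u(t))$ follow from an energy estimate on the difference of two solutions at the level $L^2\times L^2$, again using that the symmetrized system is hyperbolic and the nonlocal term is bounded; the standard trick of Bona--Smith (mollifying initial data) promotes the low-regularity continuous dependence to continuity in the $H^s\times H^{s+1/2}$ norm.

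The main obstacle I anticipate is bookkeeping the asymmetric regularity $H^s\times H^{s+1/2}$ through the energy estimate: one must choose the weights in the symmetrized energy so that the worst commutator terms — in particular those arising from $\partial_x(u\eta)$ in the $\eta$-equation, which a priori would demand $s+1$ derivatives on $u$ — actually close using only $s+1/2$ derivatives on $u$ and $s$ on $\eta$, exploiting that $\c^2(|\partial_x|)$ in the $u$-equation gains a full derivative (so its contribution to the $\eta$-level estimate is one order better) and that the pairing structure $\langle \partial_x(u\eta),\langle\partial_x\rangle^{2s}\eta\rangle + \langle \c^2(|\partial_x|)\partial_x\eta, \langle\partial_x\rangle^{2s+1}u\rangle$ features a key cancellation of the top-order terms. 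Once that cancellation is identified the rest is routine; all other steps (parabolic regularization, $\mu$-uniform bounds, passage to the limit, Bona--Smith continuous dependence) are by now textbook, and I would reference \cite{Lannes} and the treatment in \cite{HT2} for the analogous full-dispersion system rather than reproduce the computations in full. For $T>0$, the same scheme applies with the additional $1/4$-derivative smoothing from $\c^2(|\partial_x|;T)$ noted in Section~\ref{sec:ST}, which only improves the estimates.
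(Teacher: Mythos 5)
Your existence scheme (parabolic regularization, $\mu$-uniform bounds, Bona--Smith continuous dependence) differs from the paper's, which simply invokes Kato's quasilinear theory \cite{Kato-LWP} and devotes all of its effort to the a priori bound (Lemma~\ref{lem:energy}); either scheme would be acceptable once a closed energy estimate in $H^s(\mathbb{R})\times H^{s+1/2}(\mathbb{R})$ is in hand. The difficulty is that your route to that estimate rests on a mischaracterization of the system. You take the principal part to be the shallow-water matrix $A(\u)=\begin{pmatrix}u&1+\eta\\1&u\end{pmatrix}$ ``up to the nonlocal lower-order term,'' but $\c^2(|\partial_x|)\partial_x=-\mathcal{H}+\gamma^2(|\partial_x|)$ is an operator of order \emph{zero}, so its difference from $\partial_x$ is itself of order one: the genuine first-order part is $\begin{pmatrix}u&1+\eta\\0&u\end{pmatrix}$, a nontrivial Jordan block, and the classical symmetrizable-hyperbolic theory for which your symmetrizer $S(\u)=\mathrm{diag}(1,1+\eta)$ is designed does not apply. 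This is precisely why the paper rewrites the equation in the form \eqref{E:main'}--\eqref{E:gamma}, works in the asymmetric space $H^s\times H^{s+1/2}$, builds the half-derivative weight $|\partial_x|$ into the $u$-part of the energy \eqref{def:Ek}--\eqref{def:el}, and proves the commutator Lemma~\ref{L:smoothing}: the order-zero coupling $-\mathcal{H}\eta$ tested against $|\partial_x|\partial_x^{2\ell}u$ is what cancels $\partial_xu$ tested against $\partial_x^{2\ell}\eta$ (the first terms of \eqref{e:I} and \eqref{e:II}).

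Second, the ``key cancellation'' you invoke cannot do the job you assign to it: $\langle\partial_x(u\eta),\langle\partial_x\rangle^{2s}\eta\rangle$ is trilinear in the unknowns while $\langle\c^2(|\partial_x|)\partial_x\eta,\langle\partial_x\rangle^{2s+1}u\rangle$ is bilinear, so they cannot cancel one another; what cancels is the \emph{linear} pair $\langle\partial_xu,\langle\partial_x\rangle^{2s}\eta\rangle$ against the latter. The term you correctly flag as dangerous --- the top-order contribution of $\eta\,\partial_xu$ inside $\partial_x(u\eta)$, i.e.\ $\int\eta\,(\partial_x^{\ell+1}u)(\partial_x^{\ell}\eta)\,dx$ at the highest level $\ell$ --- is therefore left without a mechanism. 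Kato--Ponce controls the commutator $[\langle\partial_x\rangle^{\ell},\eta]\partial_xu$, but not the remaining term, which costs $\ell+1$ derivatives of $u$ against the $\ell+1/2$ available, and integrating half a derivative by parts merely transfers the deficit to $\eta$ (one would then need $\eta\,\partial_x^{\ell}\eta\in H^{1/2}$). This trilinear form is genuinely unbounded on $H^{\infty}\times H^{\ell+1/2}\times H^{\ell}$ --- test it on $\eta=\eta_{\mathrm{low}}+N^{-\ell}\cos(Nx)\chi$ and $u=N^{-\ell-1/2}\sin(Nx)\chi$ as $N\to\infty$ --- so no product or commutator estimate alone will close it in the flat norm; the proof must exploit additional structure at exactly this point (for instance a symmetrizer adapted to the order-$(1,0)$ coupling, such as weighting the $\eta$-part of the energy by $\c^2(|\partial_x|)$ so that $\partial_x(u\eta)$ is tested against the bounded operator $\c^2(|\partial_x|)\partial_x$, or an integration by parts specific to this term). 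Until you exhibit that mechanism the proposal does not close; I would add that the corresponding step in the paper --- the bound asserted for the last term of \eqref{e:I} in the proof of Lemma~\ref{lem:energy} --- is also the one that deserves the closest scrutiny there. Your closing remark about surface tension is likewise off: for $T>0$ the coupling operator becomes $(1-T\partial_x^2)(-\mathcal{H}+\gamma^2(|\partial_x|))$, which is of order one, so the capillary case does not simply ``improve the estimates.''
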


The proof follows along the same line as the argument in \cite{Kato-LWP}, for instance. The main difference is how one establishes an a priori bound. The same proof works in the presence of the effects of surface tension, for which
\[
\partial_tu+(1-T\partial_x^2)(-\mathcal{H}+\gamma^2(|\partial_x|))\eta+u\partial_xu=0
\]
replaces the latter equation of \eqref{E:main'}. The same proof works in the periodic setting. 

\subsection*{Preliminaries}

Note that $\|\mathcal{H}v\|_{L^2(\mathbb{R})}=\|v\|_{L^2(\mathbb{R})}$ and $\mathcal{H}^2=-1$. Note that $|\partial_x|:=\mathcal{H}\partial_x$ is self adjoint, and
\[
\int^\infty_{-\infty}(v^2+u|\partial_x|v)~dx
\]
is equivalent to $\|v\|_{H^{1/2}(\mathbb{R})}^2$. Moreover, commutators of $\mathcal{H}$ and $|\partial_x|$ are ``smoothing."

\begin{lemma}[Smoothing]\label{L:smoothing}
\begin{equation}\label{E:comm}
\int^\infty_{-\infty}fv|\partial_x|v~dx\leq C\|f\|_{H^{3/2+}(\mathbb{R})}\|v\|_{H^{1/2}(\mathbb{R})}^2
\quad\text{and}\quad
\int^\infty_{-\infty}f(\partial_xv)\mathcal{H}\partial_xv~dx\leq C\|f\|_{H^{5/2+}(\mathbb{R})}\|v\|_{L^2(\mathbb{R})}^2
\end{equation}
for some constant $C>0$ independent of $f$ and $v$.
\end{lemma}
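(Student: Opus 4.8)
These are standard commutator/smoothing estimates of Calderón type. For $v\in L^2(\mathbb R)$ the products $v|\partial_x|v$ and $(\partial_xv)(\mathcal H\partial_xv)$ need not be integrable, so the left-hand sides are \emph{a priori} only densely defined bilinear forms; the plan is to establish the inequalities for Schwartz $v$, where all manipulations below are classical, and then to \emph{define} the left-hand sides as the continuous bilinear extensions that \eqref{E:comm} provides.

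For the first inequality I would use the self-adjointness of $|\partial_x|^{1/2}$ on $L^2(\mathbb R)$ to write
\[
\int_{-\infty}^\infty fv|\partial_x|v\,dx=\langle|\partial_x|^{1/2}(fv),\,|\partial_x|^{1/2}v\rangle_{L^2(\mathbb R)},
\]
so that Cauchy--Schwarz bounds it by $\|fv\|_{H^{1/2}(\mathbb R)}\|v\|_{H^{1/2}(\mathbb R)}$. It then suffices to invoke the product estimate $\|fv\|_{H^{1/2}(\mathbb R)}\le C\|f\|_{H^{3/2+}(\mathbb R)}\|v\|_{H^{1/2}(\mathbb R)}$, which follows from $H^{3/2+}(\mathbb R)\hookrightarrow W^{1,\infty}(\mathbb R)$ together with the boundedness of multiplication by a $W^{1,\infty}(\mathbb R)$ function on $H^s(\mathbb R)$ for $0\le s\le1$, obtained by interpolating between $s=0$ and $s=1$.

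For the second inequality the crux is the Calderón cancellation. Since $\mathcal H$ is skew-adjoint (from $\mathcal H^2=-1$ and $\|\mathcal Hv\|_{L^2(\mathbb R)}=\|v\|_{L^2(\mathbb R)}$), the quadratic form $v\mapsto\int f(\partial_xv)(\mathcal H\partial_xv)\,dx$ equals $-\tfrac12\langle[\mathcal H,f]\partial_xv,\partial_xv\rangle$. Expanding by Plancherel's theorem and using $\widehat{\mathcal Hw}(\xi)=-i\operatorname{sgn}(\xi)\widehat w(\xi)$, one finds that its modulus is, up to a fixed constant,
\[
\Bigl|\,\int\!\!\int\widehat f(-\xi-\eta)\,(\operatorname{sgn}\xi+\operatorname{sgn}\eta)\,\xi\eta\,\widehat v(\xi)\widehat v(\eta)\,d\xi\,d\eta\,\Bigr|,
\]
and in particular the integrand vanishes unless $\xi\eta>0$. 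On $\{\xi\eta>0\}$ one has $|\xi+\eta|=|\xi|+|\eta|$, hence $|\xi\eta|\le(\xi+\eta)^2$, so $|(\operatorname{sgn}\xi+\operatorname{sgn}\eta)\,\xi\eta\,\widehat f(-\xi-\eta)|\le2\,(\xi+\eta)^2|\widehat f(-\xi-\eta)|=2|\widehat{f''}(-\xi-\eta)|$; this is the step that trades the two derivatives carried by the two copies of $\partial_xv$ for two derivatives of $f$, and it is available precisely because the opposite-sign (near-diagonal) frequency interactions have been annihilated by the commutator. Dropping the sign constraint and applying Young's inequality (Cauchy--Schwarz symmetrically in $\xi$ and $\eta$) then bounds the form by $C\|\widehat{f''}\|_{L^1(\mathbb R)}\|\widehat v\|_{L^2(\mathbb R)}^2=C\|\widehat{f''}\|_{L^1(\mathbb R)}\|v\|_{L^2(\mathbb R)}^2$, and one further Cauchy--Schwarz gives $\|\widehat{f''}\|_{L^1(\mathbb R)}\le C\|f''\|_{H^{1/2+}(\mathbb R)}\le C\|f\|_{H^{5/2+}(\mathbb R)}$, which is \eqref{E:comm}.

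The product and Sobolev bounds are routine; the step carrying the real content is the identification of the sign restriction $\xi\eta>0$ --- equivalently, recognizing that two full derivatives may be moved onto $f$ only once the Calderón cancellation is extracted. Some care is also required in the Schwartz-to-$L^2$ passage, where the displayed quantities must be defined as the unique bounded bilinear extensions, after which \eqref{E:comm} holds by continuity.
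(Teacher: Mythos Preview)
Your argument is correct. For the second inequality it is essentially the paper's proof: both extract the Calder\'on commutator $\int f(\partial_xv)\mathcal H\partial_xv=-\tfrac12\langle[\mathcal H,f]\partial_xv,\partial_xv\rangle$ and then exploit the Fourier sign cancellation. The only cosmetic difference is the choice of variables: the paper parametrizes by output and input frequencies $\kappa,\kappa_1$ (so the kernel carries $\operatorname{sgn}\kappa-\operatorname{sgn}\kappa_1$ and is supported where the signs differ, giving $|\kappa|+|\kappa_1|=|\kappa-\kappa_1|$), while you symmetrize in the two $v$-frequencies $\xi,\eta$ (so the kernel carries $\operatorname{sgn}\xi+\operatorname{sgn}\eta$ and is supported where the signs agree, giving $|\xi|+|\eta|=|\xi+\eta|$). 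After the substitution $\xi=-\kappa$, $\eta=\kappa_1$ these are the same computation, and both finish with Young and $\|\widehat{f''}\|_{L^1}\le C\|f\|_{H^{5/2+}}$.

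For the first inequality your route genuinely diverges from the paper's. Both begin with self-adjointness of $|\partial_x|^{1/2}$ to write $\int fv|\partial_x|v=\langle |\partial_x|^{1/2}(fv),|\partial_x|^{1/2}v\rangle$, but the paper then splits $|\partial_x|^{1/2}(fv)=f|\partial_x|^{1/2}v+[|\partial_x|^{1/2},f]v$ and bounds the commutator term by hand in Fourier via $|\kappa|^{1/2}\big||\kappa|^{1/2}-|\kappa_1|^{1/2}\big|\le C|\kappa-\kappa_1|$. You instead apply Cauchy--Schwarz directly and invoke the product estimate $\|fv\|_{H^{1/2}}\le C\|f\|_{W^{1,\infty}}\|v\|_{H^{1/2}}$ by interpolation. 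Your path is shorter and uses only off-the-shelf tools; the paper's is more self-contained, deriving the needed multiplier bound explicitly rather than appealing to interpolation of Sobolev spaces.
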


\begin{proof}
Note that $|\partial_x|^{1/2}$ is self adjoint, and we calculate that 
\[
\int fv|\partial_x|u~dx=\int f(|\partial_x|^{1/2}v)^2~dx+\int(|\partial_x|^{1/2}[|\partial_x|^{1/2},f]v)v~dx.
\]
Clearly, the first term of the right side is bounded by $\|f\|_{L^\infty(\mathbb{R})}\||\partial_x|^{1/2}v\|_{L^2(\mathbb{R})}^2$. We claim that the second term of the right side is bounded by $\||\k|\widehat{f}\|_{L^1(\mathbb{R})}\|v\|_{H^{1/2}(\mathbb{R})}^2$ up to the multiplication by a constant. Indeed, since 
\[
(|\partial_x|^{1/2}[|\partial_x|^{1/2},f]v)\,\widehat{}\,(\k)
=\frac{1}{\sqrt{2\pi}}\int^\infty_{-\infty}|\k|^{1/2}(|\k|^{1/2}-|\k_1|^{1/2})\widehat{f}(\k-\k_1)\widehat{v}(\k_1)~d\k_1
\]
and since $|\k|^{1/2}||\k|^{1/2}-|\k_1|^{1/2}|\leq C|\k-\k_1|$ for any $\k,\k_1\in\mathbb{R}$ for some constant $C>0$ by brutal force (see also \cite{Yosihara}, for instance), it follows from Young's inequality and the Parseval theorem that 
\[
\||\partial_x|^{1/2}[|\partial_x|^{1/2},f]v\|_{L^2(\mathbb{R})}\leq 
C\||\k|\widehat{f}\|_{L^1(\mathbb{R})}\|v\|_{L^2(\mathbb{R})}
\]
for some constant $C>0$ independent of $f$ and $v$. H\"older's inequality then proves the claim. The first inequality of \eqref{E:comm} follows by a Sobolev inequality. 

Note that $\mathcal{H}$ is skew adjoint, and we calculate that 
\begin{align*}
\int f(\partial_xv)|\partial_x|v~dx=
&-\int f(\mathcal{H}\partial_xv)\partial_xv~dx-\int([\mathcal{H},f]\partial_xv)\partial_xv~dx\\
=&-\frac12\int([\mathcal{H},f]\partial_xv)\partial_xv~dx.
\end{align*}
Since
\[
(\partial_x[\mathcal{H},f]\partial_xv)\,\widehat{}\,(\k)
=-\frac{1}{\sqrt{2\pi}}\int^\infty_{-\infty}\k(\text{sgn}(\k)-\text{sgn}(\k_1))\widehat{f}(\k-\k_1)\k_1\widehat{v}(\k_1)~d\k_1
\]
and since $|\k|+|\k_1|=\k\,\text{sgn}\k+\k_1\text{sgn}\k_1\leq |\k-\k_1|$ whenever $\text{sgn}\k\neq\text{sgn}\k_1$ by brutal force (see also \cite{Yosihara}, for instance), it follows from Young's inequality and the Parseval theorem that 
\[
\|\partial_x[\mathcal{H},f]\partial_xv\|_{L^2(\mathbb{R})}\leq 
C\||\k|^2\widehat{f}\|_{L^1(\mathbb{R})}\|v\|_{L^2(\mathbb{R})}
\]
for some constant $C>0$ independent of $f$ and $v$. H\"older's inequality and a Sobolev inequality then prove the second inequality of \eqref{E:comm}. This completes the proof.
\end{proof}

\subsection*{A priori bound}

For $k\geq 1$ an integer, let
\begin{equation}\label{def:Ek}
E_k^2(t)=\frac12\|\eta\|_{L^2(\mathbb{R})}^2(t)+\frac12\|u\|_{L^2(\mathbb{R})}^2(t)+\sum_{\ell=1}^k e_\ell^2(t),
\end{equation}
where
\begin{equation}\label{def:el}
e_\ell^2(t)=\frac12\int^\infty_{-\infty}((\partial_x^\ell\eta(t))^2+(\partial_x^\ell u(t))|\partial_x|(\partial_x^\ell u(t)))~dx.
\end{equation}
Clearly, $E_k(t)$ is equivalent to $\|\eta\|_{H^k(\mathbb{R})}(t)+\|u\|_{H^{k+1/2}(\mathbb{R})}(t)$.

\begin{lemma}[A priori bound]\label{lem:energy}
For any integer $k\geq2$ if $\eta\in H^k(\mathbb{R})$ and $u\in H^{k+1/2}(\mathbb{R})$ solve \eqref{E:main'}-\eqref{E:gamma} over the interval $[0,t_0)$ for some $t_0>0$ then
\begin{equation}\label{I:energy}
E_k(t)\leq \frac{E_k(0)}{1-CE_k(0)t}\qquad\text{for any $t\in[0,t_1]$}
\end{equation}
for some constant $C>0$ independent of $\eta$ and $u$ for some $t_1\in (0,t_0)$ depending on $E_k(0)$. Moreover,
\begin{equation}\label{I:norm}
\|\eta\|_{H^k(\mathbb{R})}(t)+\|u\|_{H^{k+1/2}(\mathbb{R})}(t)
\leq C(t, \|\eta\|_{H^k(\mathbb{R})}(0), \|u\|_{H^{k+1/2}(\mathbb{R})}(0))
\end{equation}
for any $t\in[0,t_1]$.
\end{lemma}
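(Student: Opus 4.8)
The plan is to establish \eqref{I:energy} by differentiating \eqref{E:main'}, running a weighted $L^2$ energy estimate tuned to the functionals \eqref{def:Ek}--\eqref{def:el}, and then to read off \eqref{I:norm} from the equivalence of $E_k(t)$ with $\|\eta\|_{H^k(\mathbb{R})}(t)+\|u\|_{H^{k+1/2}(\mathbb{R})}(t)$. Concretely, for each $0\le\ell\le k$ I would apply $\partial_x^\ell$ to both equations of \eqref{E:main'}, pair the first with $\partial_x^\ell\eta$ and the second with $|\partial_x|\partial_x^\ell u$ in $L^2(\mathbb{R})$ (using that $|\partial_x|$ is self adjoint), integrate in $x$, add, and sum over $\ell$; a routine mollification justifies the manipulations at the stated regularity. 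The aim is the differential inequality $\frac{d}{dt}E_k^2(t)\le CE_k^3(t)$ for $E_k$ sufficiently small, from which $\frac{d}{dt}E_k\le\tfrac12 CE_k^2$, and hence \eqref{I:energy} (after renaming $C$) follows by integration as long as $1-CE_k(0)t>0$; one then takes $t_1$ smaller than $\min\{t_0,1/(CE_k(0))\}$, which is why $t_1$ depends on $E_k(0)$.

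In carrying out the estimate the terms split into three classes. \emph{Linear.} Since $|\partial_x|=\mathcal{H}\partial_x$ and $\mathcal{H}$ is skew adjoint with $\mathcal{H}^2=-1$, so that $\int(\mathcal{H}f)(\mathcal{H}g)\,dx=\int fg\,dx$, the dispersive contribution to $\frac{d}{dt}e_\ell^2$ is $\int|\partial_x|\partial_x^\ell u\,\mathcal{H}\partial_x^\ell\eta\,dx=\int\partial_x^{\ell+1}u\,\partial_x^\ell\eta\,dx$, which cancels the term $-\int\partial_x^\ell\eta\,\partial_x^{\ell+1}u\,dx$ coming from $\partial_xu$ in the first equation. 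The remaining linear term, from $\gamma^2(|\partial_x|)\eta$, is at worst quadratic: by \eqref{E:gamma}, $|\int|\partial_x|\partial_x^\ell u\,\gamma^2(|\partial_x|)\partial_x^\ell\eta\,dx|\le C\|\eta\|_{L^2(\mathbb{R})}\|u\|_{H^{\ell+1/2}(\mathbb{R})}$, so to reach the clean cubic bound one should, if necessary, replace $E_k$ by an equivalent functional adapted to the linear energy that is \emph{exactly} conserved by the linearization (equivalently, diagonalize the linear flow into two half-wave equations), whereupon the whole linear flow drops out. \emph{Lower-order nonlinear.} Expanding $\partial_x^{\ell+1}(u\eta)$ and $\partial_x^\ell(u\partial_x u)$ by Leibniz, every contribution in which no factor carries the maximal number of derivatives is bounded by $CE_k^3$ via Moser-type product estimates and the Sobolev embeddings $H^k(\mathbb{R}),H^{k+1/2}(\mathbb{R})\hookrightarrow W^{1,\infty}(\mathbb{R})$, valid because $k\ge2$. \emph{Top-order nonlinear.} From $u\partial_xu$ the worst contributions are $-\int u\,\partial_x^{\ell+1}u\,\mathcal{H}\partial_x^{\ell+1}u\,dx$ and $-\ell\int\partial_xu\,(\partial_x^\ell u)\,|\partial_x|(\partial_x^\ell u)\,dx$, which are exactly of the two forms controlled by \eqref{E:comm} in Lemma~\ref{L:smoothing}, hence bounded by $C\|u\|_{H^{5/2+}(\mathbb{R})}\|u\|_{H^{\ell+1/2}(\mathbb{R})}^2\le CE_k^3$.

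The point I expect to be the real obstacle is the top-order term produced by the transport nonlinearity $\partial_x(u\eta)$ in the first equation, namely $-\int\partial_x^\ell\eta\,\eta\,\partial_x^{\ell+1}u\,dx$: integrating by parts once moves a derivative onto $\eta\,\partial_x^\ell\eta$ but leaves $-\int\eta\,\partial_x^{\ell+1}\eta\,\partial_x^\ell u\,dx$, which at $\ell=k$ appears to lose half a derivative relative to $E_k$ and does not pair against any compensating term from the second equation. The remedy is a symmetrization of the quasilinear part of \eqref{E:main'}: one weights the $\partial_x^\ell\eta$ term in $e_\ell^2$ by a function of $\eta$ close to $1$, or equivalently uses the first equation to substitute $\partial_x^{\ell+1}u=-\partial_t\partial_x^\ell\eta-\partial_x^{\ell+1}(u\eta)$ so that the offending piece becomes a total time derivative $\tfrac12\frac{d}{dt}\!\int\eta(\partial_x^\ell\eta)^2\,dx$ plus a cubic remainder, which is absorbed into the energy (iterating produces a convergent series in powers of the small quantity $\|\eta\|_{L^\infty(\mathbb{R})}$, i.e.\ a weight $g(\eta)$ close to $1$). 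Because $\|\eta\|_{L^\infty(\mathbb{R})}$ is small, this changes $E_k$ only by $O(\|\eta\|_{L^\infty(\mathbb{R})}E_k^2)$, so the modified functional stays equivalent to $\|\eta\|_{H^k(\mathbb{R})}+\|u\|_{H^{k+1/2}(\mathbb{R})}$ and the net effect is still $\frac{d}{dt}E_k^2\le CE_k^3$.

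Finally, integrating the differential inequality gives $E_k(t)\le E_k(0)/(1-CE_k(0)t)$ on $[0,t_1]$, which is \eqref{I:energy}; and since $E_k(t)$ is comparable to $\|\eta\|_{H^k(\mathbb{R})}(t)+\|u\|_{H^{k+1/2}(\mathbb{R})}(t)$ with constants independent of the solution and $E_k(0)\le C(\|\eta_0\|_{H^k(\mathbb{R})}+\|u_0\|_{H^{k+1/2}(\mathbb{R})})$, the bound \eqref{I:norm} follows at once, with the right side a function of $t$ and the two initial norms. The only borderline case is $k=2$, where the commutator estimate of Lemma~\ref{L:smoothing} would be applied with $u\in H^{5/2}(\mathbb{R})$ rather than $H^{5/2+}(\mathbb{R})$; this is handled either by a slightly sharper commutator bound or by first proving the estimate for integer $k\ge3$ and recovering $k=2$ by approximation.
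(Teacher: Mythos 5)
Your proposal follows the same skeleton as the paper's proof: the same functionals \eqref{def:Ek}--\eqref{def:el}, the same cancellation of the linear cross terms using $|\partial_x|=\mathcal{H}\partial_x$ and $\mathcal{H}^2=-1$, the quadratic (rather than cubic) treatment of the $\gamma^2(|\partial_x|)\eta$ contribution via \eqref{E:gamma} (the paper simply carries it as the ``$1+$'' in the prefactor of \eqref{e:ej} instead of diagonalizing it away, so your optional modification of $E_k$ is not needed), Lemma~\ref{L:smoothing} for the two top-order contributions of $u\partial_xu$, and the same Riccati integration leading to \eqref{I:energy} and \eqref{I:norm}. Where you genuinely diverge is in the treatment of the term $\int\eta\,(\partial_x^{\ell+1}u)(\partial_x^{\ell}\eta)\,dx$ produced by $\partial_x(u\eta)$: the paper buries it in the Leibniz remainder of \eqref{e:I} and asserts a bound by $\|u\|_{H^\ell(\mathbb{R})}\|\partial_x^\ell\eta\|_{L^2(\mathbb{R})}^2$, which does not follow as stated at $\ell=k$, since $\partial_x^{k+1}u$ lies only in $H^{-1/2}(\mathbb{R})$ while $\eta\,\partial_x^{k}\eta$ is only in $L^2(\mathbb{R})$, so the pairing loses half a derivative exactly as you predict. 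Your symmetrization --- weighting the $\eta$-part of the energy by a function of $\eta$ close to $1$, equivalently using the first equation to turn the offending piece into $\tfrac12\frac{d}{dt}\int\eta(\partial_x^\ell\eta)^2\,dx$ plus cubic remainders --- is the standard quasilinear repair and supplies a detail the paper's write-up elides; it perturbs $E_k$ only by $O(\|\eta\|_{L^\infty(\mathbb{R})}E_k^2)$, so the equivalence with $\|\eta\|_{H^k(\mathbb{R})}+\|u\|_{H^{k+1/2}(\mathbb{R})}$, and hence \eqref{I:norm}, survive. Your remark on $k=2$ is likewise on target: \eqref{e:ej} invokes $\|u\|_{H^{5/2+}(\mathbb{R})}$, which is dominated by $E_k$ only for $k\geq3$, so the borderline case requires precisely the extra sharpening or approximation you indicate. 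In short, your argument is correct and, on the two delicate points, more careful than the published one.
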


\begin{proof}
For $\ell\geq 1$ an integer, we differentiate \eqref{def:el} with respect to $t$ and use \eqref{E:main'} to arrive at that
\begin{align*}
\frac{de_\ell^2}{dt}=&\int((\partial_x^\ell\partial_t\eta)(\partial_x^\ell\eta)
+(\partial_x^\ell\partial_tu)|\partial_x|(\partial_x^\ell u))~dx \notag \\
=&-\int\partial_x^\ell(\partial_xu+\partial_x(u\eta))(\partial_x^\ell\eta)~dx
-\int\partial_x^\ell(-\mathcal{H}\eta+\gamma^2(|\partial_x|)\eta+u\partial_xu)|\partial_x|(\partial_x^\ell u)~dx \notag \\
=:&(I)+(II)
\end{align*}
over the interval $(0,t_0)$. An integration by parts leads to that 
\begin{align}\label{e:I}
(I)=&-\int(\partial_x^{\ell+1}u)(\partial_x^\ell\eta)~dx+\Big(\ell+\frac12\Big)\int(\partial_xu)(\partial_x^\ell\eta)^2~dx \\
&-\int(\partial_x^{\ell+1}(u\eta)-u(\partial_x^{\ell+1}\eta)
-(\ell+1)(\partial_xu)(\partial_x^\ell\eta))(\partial_x^\ell\eta)~dx. \notag
\end{align}
Recall $|\partial_x|=\mathcal{H}\partial_x$. Since $\mathcal{H}$ is skew adjoint and $\mathcal{H}^2=-1$, moreover,
\begin{align}\label{e:II}
(II)=&-\int(\partial_x^{\ell+1}\eta)(\partial_x^\ell u)~dx
-\int(\mathcal{H}\partial_x^{\ell+1}\gamma^2(|\partial_x|)\eta)(\partial_x^\ell u)~dx\\
&-\int u(\partial_x^{\ell+1}u)\mathcal{H}(\partial_x^{\ell+1}u)~dx
-\ell\int(\partial_xu)(\partial_x^\ell u)(\mathcal{H}\partial_x^{\ell+1}u)~dx \notag \\
&-\int(\partial_x^\ell(u\partial_xu)-u(\partial_x^{\ell+1}u)
-\ell(\partial_xu)(\partial_x^\ell u))|\partial_x|(\partial_x^\ell u)~dx. \notag
\end{align}

Note that the first term of the right side of \eqref{e:I} and the first term of the right side of \eqref{e:II} cancel each other when added together after an integration by parts. Note that the second term of the right side of \eqref{e:I} is bounded by $(\ell+\frac12)\|\partial_xu\|_{L^\infty(\mathbb{R})}\|\partial_x^\ell\eta\|_{L^2(\mathbb{R})}^2$ and the last term of the right side of \eqref{e:I} is bounded by $\|u\|_{H^\ell(\mathbb{R})}\|\partial_x^\ell\eta\|_{L^2(\mathbb{R})}^2$ up to the multiplication by a constant by the Leibniz rule. Moreover, note that the second term of the right side of \eqref{e:II} is bounded by $\|\eta\|_{L^2(\mathbb{R})}\|\partial_x^\ell u\|_{L^2(\mathbb{R})}$ by \eqref{E:gamma}, the third and the fourth terms of the right side of \eqref{e:II} are bounded by $\|u\|_{H^{5/2+}(\mathbb{R})}\|\partial_x^\ell u\|_{H^{1/2}(\mathbb{R})}^2$ by \eqref{E:comm}. Note that for $\ell\geq 2$, the last term of the right side of \eqref{e:II} is bounded by $\|u\|_{H^{\ell+1/2}(\mathbb{R})}^2\||\partial_x|^{1/2}\partial_x^\ell u\|_{L^2(\mathbb{R})}$ up to the multiplication by a constant by the Leibniz rule and a Sobolev inequality. Together,
\begin{equation}\label{e:ej}
\frac{de_\ell^2}{dt}\leq C(1+\|u\|_{H^{5/2+}(\mathbb{R})}+\|u\|_{H^{\ell+1/2}(\mathbb{R})})(\|\eta\|_{H^\ell(\mathbb{R})}^2+\|u\|_{H^{\ell+1/2}(\mathbb{R})}^2)
\end{equation}
for any integer $\ell\geq 2$ over the interval $(0,t_0)$, for some constant $C>0$ independent of $\eta$ and $u$. 

To proceed, we make an explicit calculation to show that
\begin{align}
\frac12\frac{d}{dt}\|\eta\|_{L^2(\mathbb{R})}^2=&-\int(\partial_xu+\partial_x(u\eta))\eta~dx\label{e:hL2} \\
\leq&\|\partial_xu\|_{L^2(\mathbb{R})}\|\eta\|_{L^2(\mathbb{R})}
+\frac12\|\partial_xu\|_{L^\infty(\mathbb{R})}\|\eta\|_{L^2(\mathbb{R})}^2, \notag \\
\frac12\frac{d}{dt}\|u\|_{L^2}^2=&\int(\mathcal{H}\eta-\gamma^2(|\partial_x|)\eta-u\partial_xu)u~dx\label{e:uL2} \\
\leq&2\|\eta\|_{L^2(\mathbb{R})}\|u\|_{L^2(\mathbb{R})}
+\|\partial_xu\|_{L^\infty(\mathbb{R})}\|u\|_{L^2(\mathbb{R})}^2 \notag
\end{align}
over the interval $(0,t_0)$. Here the first equalities of \eqref{e:hL2} and \eqref{e:uL2} use \eqref{E:main'}. Adding \eqref{e:ej} through \eqref{e:uL2}, we deduce that 
\[
\frac{dE_k}{dt}\leq C E_k^2
\]
for any integer $k\geq 2$ over the interval $(0,t_0)$, for some constant $C>0$ independent of $\eta$ and $u$. We then deduce \eqref{I:energy} because it invites a solution until the time $t_1=(CE_k(0))^{-1}$. Moreover, we deduce \eqref{I:norm} because $E_k(t)$ is equivalent to $\|\eta\|_{H^k(\mathbb{R})}(t)+\|u\|_{H^{k+1/2}(\mathbb{R})}(t)$. This completes the proof. 
\end{proof}


\section{Collision of $(2,-)$ and $(0,+)$}\label{sec:2,0}

Throughout the section, we employ the notation in Section~\ref{sec:MI} and Section~\ref{sec:HF}. In particular, we use the notation of \eqref{def:Lxa}, and \eqref{def:cn}, \eqref{def:Mn} for simplicity of notation. Let 
\[
i\omega(2+\xi_0,-)=i\omega(0+\xi_0,+)=:i\omega_0
\] 
for some $\xi_0\in(0,1/2]$ denote a nonzero and purely imaginary, colliding $L^2(\mathbb{T})\times L^2(\mathbb{T})$ eigenvalue of $\L(\xi_0,0)$. For $\xi$, $a\in\mathbb{R}$ and $|\xi|$, $|a|$ sufficiently small, recall that the spectrum of $\L(\xi_0+\xi,a)$ contains two eigenvalues in the vicinity of $i\omega_0$ in $\mathbb{C}$, and 
\begin{equation}\label{def:q20}
\begin{aligned}
\q_{1}(z)=&\begin{pmatrix} 1 \\ c_{2,\xi} \end{pmatrix}e^{2iz}+a\mathbf{q}_{2,1}e^{3iz}+a\mathbf{q}_{2,-1}e^{iz} +O(\xi^2a+a^2), \\
\q_{2}(z)=&\begin{pmatrix} 1 \\ -c_{0,\xi} \end{pmatrix}+a\mathbf{q}_{0,1}e^{iz}+a\mathbf{q}_{0,-1}e^{-iz}+O(\xi^2a+a^2)  
\end{aligned}
\end{equation}
are the associated eigenfunctions as $\xi$, $a\to 0$, where 
\begin{equation}\label{def:a20}
\begin{aligned}
\mathbf{q}_{2,\pm1}=&\frac12\frac{2\pm1+\x}{(\omega_0-c_0(2\pm1+\x))^2-c_{2\pm 1,0}^2(2\pm1+\x)^2}  \\
&\times\begin{pmatrix} c_0(2\pm1+\x)(c_0+2c_{2,0})-\omega_0(c_0+c_{2,0}) \\ 
(2\pm1+\x)(c_0^2c_{2,0}+c_{2\pm1,0}^2(c_0+c_{2,0}))-\omega_0 c_0c_{2,0}\end{pmatrix}, \\
\mathbf{q}_{0,\pm1}=&\frac12\frac{\pm1+\x}{(\omega_0-c_0(\pm1+\x))^2-c_{\pm 1,0}^2(\pm1+\x)^2} \\
&\times\begin{pmatrix} c_0(\pm1+\x)(c_0-2c_{0,0})-\omega_0(c_0-c_{0,0})\\ 
(\pm1+\x)(-c_0^2c_{0,0}+c_{\pm1,0}^2(c_0-c_{0,0}))+\omega_0c_0c_{0,0}\end{pmatrix};
\end{aligned}
\end{equation}
see \eqref{def:Q1} and \eqref{def:Q2}. In Section~\ref{ss:perturbation4}, we calculated \eqref{def:L4} and \eqref{def:I4} up to the order of $a$ as $\xi$, $a\to 0$. Here we take matters further and calculate terms of orders $\xi^2a$ and $a^2$. One may explore other collisions in like manner.

For $\xi=0$ for $a\in\mathbb{R}$ and $|a|$ sufficiently small, it turns out that terms of order $a^2$ in the eigenfunction expansion do not contribute to the spectral instability. Hence we may neglect them in \eqref{def:q20}. 

We begin by calculating 
\begin{align*}
\l \q_{1},\q_{1}\r=&1+c_{2,\xi}^2
+a^2 \mathbf{q}_{2,1}\cdot\mathbf{q}_{2,1}+a^2\mathbf{q}_{2,-1}\cdot\mathbf{q}_{2,-1}+O(\xi^2 a+\xi a^2+a^3), \\
\l\q_{2},\q_{2}\r=&1+c_{0,\xi}^2
+a^2\mathbf{q}_{0,1}\cdot\mathbf{q}_{0,1}+a^2\mathbf{q}_{0,-1}\cdot\mathbf{q}_{0,-1}+O(\xi^2 a+\xi a^2+a^3)
\intertext{and}
\l \q_{1},\q_{2}\r=&\l \q_{2},\q_{1}\r=a^2\mathbf{q}_{2,-1}\cdot\mathbf{q}_{0,+1}+O(\xi^2 a+\xi a^2+a^3)
\end{align*}
as $\xi,a\to 0$, where $c_{n,\xi}$ is in \eqref{def:cn}, 
\begin{align*}
\mathbf{q}_{2,\pm1}\cdot \mathbf{q}_{2,\pm1}=&
\frac14\Big(\frac{\x+2\pm1}{(\omega_0-c_0(\x+2\pm1))^2-c_{2\pm1,0}^2(\x+2\pm1)^2}\Big)^2 \\
&\times((d_{2,1}\omega_0-c_0d_{2,2}(\x+2\pm1))^2 \\
&\qquad+(c_0c_{2,0}\omega_0-(c_0^2c_{2,0}+d_{2,1}c_{2\pm1,0}^2)(\x+2\pm1))^2),
\intertext{and $d_{2,1}=c_0+c_{2,0}$, $d_{2,2}=c_0+2c_{2,0}$. Moreover,}
\mathbf{q}_{0,\pm1}\cdot \mathbf{q}_{0,\pm1}=&
\frac14\Big(\frac{\x\pm1}{(\omega_0 - c_0(\x\pm1))^2-c_{\pm1,0}^2 (\x\pm1)^2}\Big)^2 \\
&\times ((d_{0,1} \omega_0-c_0d_{0,2} (\x\pm1))^2  \\
&\qquad+ (c_0 c_{0,0} \omega_0-(c_0^2 c_{0,0}-d_{0,1} c_{\pm1,0}^2) (\x\pm1))^2),
\end{align*}
and $d_{0,1}=c_0-c_{0,0}$, $d_{0,2}=c_0-2c_{0,0}$. We merely pause to remark that $\mathbf{q}_{2,-1}\cdot\mathbf{q}_{0,1}$ is real valued. The exact formula is lengthy and tedious. It does not influence the result. Hence we omit the detail. A straightforward calculation reveals that
\begin{align*}
\frac{\l\q_{1},\q_{1}\r}{\l \q_{1},\q_{1}\r}&=\frac{\l \q_{2},\q_{2}\r}{\l \q_{2},\q_{2}\r}=1
\intertext{and}
\frac{\l \q_{1},\q_{2}\r}{\l \q_{1},\q_{1}\r}&=a^2\frac{\mathbf{q}_{2,-1}\cdot\mathbf{q}_{0,1}}{1+c^2_{2,\xi}}
+O(\xi^2 a+\xi a^2+a^3), \\
\frac{\l\q_{2},\q_{1}\r}{\l \q_{2},\q_{2}\r}&=a^2\frac{\mathbf{q}_{2,-1}\cdot\mathbf{q}_{0,1}}{1+c^2_{0,\xi}}
+O(\xi^2 a+\xi a^2+a^3)
\end{align*}
as $\xi$, $a\to 0$. Note that terms of order $a^2$ are real valued.

To proceed, we use \eqref{def:Lx} and \eqref{E:hu-small} to write 
\begin{align*}
\L(\x+\xi,a)=&\L(\x+\xi,0) \\
&-ae^{-i\x z}\partial_z(\mathbf{C}_0\cos z)e^{i\x z}-i\xi ae^{-i\x z}(\mathbf{C}_0\cos z)e^{i\x z}\\
&-a^2e^{-i\x z}\partial_z\begin{pmatrix}h_0-\frac12 & h_0\\ 0 & h_0-\frac12 \end{pmatrix}e^{i\x z} \\
&-a^2e^{-i\x z}\partial_z\begin{pmatrix}h_2-\frac12 & h_2\\ 0 & h_2-\frac12 \end{pmatrix}\cos 2z e^{i\x z}
+O(\xi^2 a+\xi a^2+a^3)
\end{align*}
as $\xi$, $a\to 0$, where $\mathbf{C}_0$ is in \eqref{def:Mn}, $h_0$ and $h_2$ are in \eqref{def:h02}. It is then straightforward to verify that 
\begin{align*}
\L(\x+\xi,a)\begin{pmatrix} \zeta \\v \end{pmatrix}e^{inz} &
= i(n+\x+\xi)\begin{pmatrix}c_0\zeta-v \\ c_0v-c_{n,\xi}^2\zeta \end{pmatrix}e^{inz} \\
&-\frac{1}{2}ia\begin{pmatrix} c_0\zeta+v\\ c_0v\end{pmatrix}
((n+1+\x+\xi)e^{i(n+1)z}+(n-1+\x+\xi)e^{i(n-1)z})\\
&-ia^2(n+\xi)\begin{pmatrix} h_0-\frac12 h +h_0v\\ h_0-\frac12v\end{pmatrix}e^{inz}\\
&-\frac{1}{2}ia^2\begin{pmatrix} h_2-\frac12h+h_2v\\ h_2-\frac12v\end{pmatrix}
((n+2+\x)e^{i(n+2)z}+(n-2+\x)e^{i(n-2)z})\\
&+O(\xi^2 a+\xi a^2+a^3)
\end{align*}
as $\xi$, $a\to0$ for any constants $\zeta$, $v$ and $n\in\mathbb{Z}$.

We use the above formula for $\L(\x+\xi,a)$ and \eqref{def:q20}, and we make a lengthy and complicated, but explicit, calculation to show that
\begin{align*}
\L\q_{1}=&i\omega(2+\x+\xi,-)\begin{pmatrix} 1 \\ c_{2,\xi} \end{pmatrix}e^{2iz} \\
&+ia(3+\x+\xi)\mathbf{C}_{3,\xi}\mathbf{q}_{2,1}e^{3iz}
+ia(1+\x+\xi)\mathbf{C}_{1,\xi}\mathbf{q}_{2,-1} e^{iz}\\
&-\frac12 ia\begin{pmatrix}c_0+c_{2,\xi}\\ c_0c_{2,\xi}\end{pmatrix} ((3+\x+\xi)e^{3iz}+(1+\x+\xi)e^{iz})\\
&-\frac12ia^2\begin{pmatrix} h_2-\frac12+h_2c_{2,\xi}\\ h_2-\frac12 c_{2,\xi}\end{pmatrix}((4+\x)e^{4iz}+\x)\\
&-ia^2\begin{pmatrix} h_0-\frac12+h_0c_{2,\xi}\\ h_0-\frac12 c_{2,\xi}\end{pmatrix}(2+\x)e^{2iz}\\ 
&-\frac 12 ia^2\mathbf{C}_0\mathbf{q}_{2,1}((4+\x+\xi)e^{4iz}+(2+\x+\xi)e^{2iz})\\
&-\frac 12 ia^2\mathbf{C}_0\mathbf{q}_{2,-1}((2+\x+\xi)e^{2iz}+\x+\xi)+O(\xi^2a+\xi a^2+a^3)\\ 
\intertext{as $\xi$, $a\to 0$, where $c_{n,\xi}$ is in \eqref{def:cn}, $h_0$ and $h_2$ is in \eqref{def:h02}. Moreover,}
\L\q_{2}=& i\omega(\x+\xi,+)\begin{pmatrix} 1 \\ -c_{0,\xi} \end{pmatrix}\\
&+ia(1+\x+\xi)\mathbf{C}_{1,\xi}\mathbf{q}_{0,1} e^{iz}
+ia(-1+\x+\xi)\mathbf{C}_{-1,\xi}\mathbf{q}_{0,-1} e^{-iz}\\
&-\frac12 ia\begin{pmatrix}c_0-c_{0,\xi}\\ -c_0c_{0,\xi} \end{pmatrix}((1+\x+\xi)e^{iz}+(-1+\x+\xi)e^{-iz})\\
&-\frac12 ia^2\begin{pmatrix} h_2-\frac12-h_2c_{0,\xi}\\- h_2-\frac12 c_{0,\xi}\end{pmatrix}
((2+\x)e^{2iz}+(-2+\x)e^{-2iz})\\
&- ia^2\begin{pmatrix} h_0-\frac12-h_0c_{0,\xi}\\ -h_0-\frac12 c_{0,\xi}\end{pmatrix}\x\\
&-\frac 12 ia^2\mathbf{C}_0\mathbf{q}_{0,1}((2+\x+\xi)e^{2iz}+\x+\xi)\\
&-\frac 12 ia^2\mathbf{C}_0\mathbf{q}_{0,-1}(\x+\xi+(-2+\x+\xi)e^{-2iz})+O(\xi^2a+\xi a^2+a^3)
\end{align*}
as $\xi$, $a \to 0$, where $c_{n,\xi}$ is in \eqref{def:cn}, $h_0$ and $h_2$ are in \eqref{def:h02}. The exact formulae of $\mathbf{C}_{2\pm1,\xi}\mathbf{q}_{2,\pm1}$, $\mathbf{C}_{0\pm1,\xi}\mathbf{q}_{0,\pm1}$, and $\mathbf{C}_0\mathbf{q}_{2,\pm1}$, $\mathbf{C}_0\mathbf{q}_{0,\pm1}$ are lengthy and tedious. They do not influence the result. But we include them for completeness:
\begin{align*}
\mathbf{C}_{2\pm1,\xi}\mathbf{q}_{2,\pm1}=&
\frac12\frac{\x+2\pm 1}{(\omega_0-c_0(\x+2\pm 1))^2-c_{2\pm 1,0}^2(\x+2\pm1)^2} \\
&\times\begin{pmatrix} (\x+2\pm1)(c_0^2d_{2,2}-c_0^2c_{2,0}-c_{2\pm1,0}^2d_{2,1})-\omega_0 c_0^2  \\ 
c_0(\x+2\pm1)(c_0^2c_{2,0}+c_{2\pm1,0}^2d_{2,1}-c_{n,\xi}^2d_{2,2})+\omega_0 (c_{n,\xi}^2d_{2,1}-c_0^2 c_{2,0})\end{pmatrix},
\end{align*}
where $d_{2,1}=c_0+c_{2,0}$ and $d_{2,2}=c_0+2c_{2,0}$, and
\begin{align*}
\mathbf{C}_{\pm1,\xi}\mathbf{q}_{0,\pm1}=&
\frac12\frac{\x\pm 1}{(\omega_0-c_0(\x\pm 1))^2-c_{\pm 1,0}^2(\x\pm1)^2} \\
&\times\begin{pmatrix} (\x\pm1)(c_0^2d_{0,2}+c_0^2c_{0,0}-c_{\pm1,0}^2d_{0,1})-\omega_0 c_0^2  \\ 
c_0(\x\pm1)(-c_0^2c_{0,0}+c_{\pm1,0}^2d_{0,1}-c_{n,\xi}^2d_{0,2})+\omega_0(c_{n,\xi}^2d_{0,1}+c_0^2c_{0,0})\end{pmatrix},
\end{align*}
where $d_{0,1}=c_0-c_{0,0}$ and $d_{0,2}=c_0-2c_{0,0}$. Moreover, 
\begin{align*}
\mathbf{C}_0\mathbf{q}_{2,\pm1}=&
\frac12\frac{\x+2\pm 1}{(\omega_0-c_0(\x+2\pm 1))^2-c_{2\pm 1,0}^2(\x+2\pm1)^2} \\
&\times\begin{pmatrix} (\x+2\pm1)(c_0^2d_{2,2}+c_0^2c_{2,0}+c_{2\pm1,0}^2d_{2,1})-\omega_0 c_0d_{2,2} \\ 
c_0(\x+2\pm1)(c_0^2c_{2,0}+c_{2\pm1,0}^2d_{2,1})-\omega_0c_0^2c_{2,0})\end{pmatrix}
\end{align*}
and 
\begin{align*}
\mathbf{C}_0\mathbf{q}_{0,\pm1}=& 
\frac12\frac{\x\pm 1}{(\omega_0-c(\k)(\x\pm 1))^2-c_{\pm 1,0}^2(\x\pm1)^2} \\
&\times\begin{pmatrix} (\x\pm1)(c_0^2d_{0,2}-c_0^2c_{0,0}+c_{\pm1,0}^2d_{0,1})-\omega_0 c_0d_{0,2}
\\ c_0(\x\pm1)(-c_0^2c_{0,0}+c_{2\pm1,0}^2d_{0,1})+\omega_0 c_0^2 c_{0,0}\end{pmatrix}.
\end{align*}

Continuing, we take the $L^2(\mathbb{T})\times L^2(\mathbb{T})$ inner products of the above and \eqref{def:q20}, and we make a lengthy and complicated, but explicit, calculations to show that
\begin{align*}
\frac{\l \L\q_{1},\q_{1}\r}{\l \q_{1},\q_{1}\r}=i\omega(2+\x+&\xi,-) \\
+\frac{ia^2}{1+c_{2,\xi}^2} \Big(&-(2+\x)\Big(h_0-\frac12(1+c_{2,\xi}^2)+h_0 c_{2,\xi}\Big)\\
&+(3+\x+\xi)\Big(\mathbf{C}_{3,\xi}\mathbf{q}_{2,1}\cdot \mathbf{q}_{2,1}
-\frac12 \begin{pmatrix}c_0+c_{2,\xi}\\ c_0c_{2,\xi} \end{pmatrix}\cdot \mathbf{q}_{2,1}\Big) \\
&+(1+\x+\xi)\Big(\mathbf{C}_{1,\xi}\mathbf{q}_{2,-1}\cdot \mathbf{q}_{2,-1}
-\frac12\begin{pmatrix}c_0+c_{2,\xi}\\ c_0c_{2,\xi} \end{pmatrix}\cdot \mathbf{q}_{2,-1} \Big) \\
&-\frac12(2+\x+\xi)\mathbf{C}_0(\mathbf{q}_{2,1}+\mathbf{q}_{2,-1})
\cdot\begin{pmatrix}1\\ c_{2,\xi} \end{pmatrix}\\
&-\omega(2+\x+\xi,-)(\mathbf{q}_{2,1}\cdot \mathbf{q}_{2,1}+\mathbf{q}_{2,-1}\cdot \mathbf{q}_{2,-1})\Big)\\
+O(\xi^2a+\xi&a^2+a^3) \\
=:i\omega(2+\x+&\xi,-)+ia^2L_{1,1}+O(\xi^2a+\xi a^2+a^3)
\end{align*}
as $\xi$, $a\to 0$, where $c_{n,\xi}$ is in \eqref{def:cn} and $h_0$ is in \eqref{def:h02}. Moreover,
\begin{align*}
\frac{\l \L\q_1,\q_2\r}{\l \q_1,\q_1\r}
=&\frac{ia^2}{1+c_{2,\xi}^2}\Big(\mathbf{C}_{1,\xi}\mathbf{q}_{2,-1}\cdot\mathbf{q}_{0,1}
-\frac12\x\Big(h_2-\frac12(1-c_{0,\xi}c_{2,\xi})+h_2 c_{2,\xi}\Big)\Big)\\
&+O(\xi^2a+\xi a^2+a^3)\\
=:&ia^2L_{1,2}+O(\xi^2a+\xi a^2+a^3), \\
\frac{\l \L\q_2,\q_1\r}{\l \q_2,\q_2\r} 
=&\frac{ia^2}{1+c_{0,\xi}^2}\Big((1+\x+\xi)\Big(\mathbf{C}_{1,\xi}\mathbf{q}_{0,1}\cdot \mathbf{q}_{2,-1}
-\frac12 \begin{pmatrix}c_0-c_{0,\xi}\\ -c_0 c_{0,\xi} \end{pmatrix}\cdot\mathbf{q}_{2,-1} \Big)\\
&\hspace*{35pt}-\frac12 (2+\x)(h_2-\frac12(1-c_{0,\xi}c_{2,\xi})-h_2c_{0,\xi})\Big)\\
&+O(a^3+\xi a^2) \\
=:&ia^2L_{2,1}+O(\xi^2a+\xi a^2+a^3),
\end{align*}
where $c_{n,\xi}$ is in \eqref{def:cn} and $h_2$ is in \eqref{def:h02}, and
\begin{align*}
\frac{\l \L\q_2,\q_2\r}{\l \q_2,\q_2\r} =i\omega(\xi+\x,+&)\\
+\frac{ia^2}{1+c_{0,\xi}^2}\Big(&-\x(h_0-\frac12(1+c_{0,\xi}^2)-h_0 c_{0,\xi})\\
&+(1+\x+\xi)\Big(\mathbf{C}_{1,\xi}\mathbf{q}_{0,1}\cdot \mathbf{q}_{0,1}
-\frac12 \begin{pmatrix}c_0-c_{0,\xi}\\ -c_0 c_{0,\xi} \end{pmatrix}\cdot \mathbf{q}_{0,1}\Big) \\
&+(-1+\x+\xi)\Big(\mathbf{C}_{-1,\xi}\mathbf{q}_{0,-1}\cdot \mathbf{q}_{0,-1}
-\frac12 \begin{pmatrix}c_0-c_{0,\xi}\\ -c_0 c_{0,\xi} \end{pmatrix}\cdot \mathbf{q}_{0,-1}\Big) \\
&-\frac12(\x+\xi) \mathbf{C}_0(\mathbf{q}_{0,1}+\mathbf{q}_{0,-1})\cdot \begin{pmatrix}1\\ -c_{0,\xi} \end{pmatrix}\\
&-\omega(\x+\xi,+)(\mathbf{q}_{0,1}\cdot \mathbf{q}_{0,1}+\mathbf{q}_{0,-1}\cdot \mathbf{q}_{0,-1})\Big) \\
+O(\xi^2a+\xi&a^2+a^3) \\
=:i\omega(\x+\xi,+&)+ia^2L_{2,2}+O(\xi^2a+\xi a^2+a^3)
\end{align*}
as $\xi, a\to 0$, where $c_{n,\xi}$ is in \eqref{def:cn} and $h_0$ is in \eqref{def:h02}.

Together, \eqref{def:L4} and \eqref{def:I4} become
\begin{align*}
\mathbf{L}(\xi,a)=&\begin{pmatrix} i\omega(2+\x+\xi,-) & 0 \\ 0 & i\omega(\x+\xi,+) \end{pmatrix} \\
&+ia^2\begin{pmatrix} L_{1,1} & L_{1,2} \\ L_{2,1} & L_{2,2} \end{pmatrix}+O(\xi^2a+\xi a^2+a^3)
\intertext{and} 
\mathbf{I}(\xi,a)=&\mathbf{I}
+a^2\begin{pmatrix} 0 & \dfrac{\mathbf{q}_{2,-1}\mathbf{q}_{0,1}}{1+c^2_{2,\xi}} \\ 
\dfrac{\mathbf{q}_{2,-1}\mathbf{q}_{0,1}}{1+c^2_{0,\xi}} & 0 \end{pmatrix}+O(\xi^2a+\xi a^2+a^3)
\end{align*}
as $\xi$, $a\to 0$, where $L_{k,\ell}$ for $k$, $\ell=1$, $2$ are found above, $c_{n,\xi}$ is in \eqref{def:cn}, $\mathbf{q}_{n,\pm1}$ is in \eqref{def:Q1} and \eqref{def:Q2}, and $\mathbf{I}$ means the $2\times 2$ identity matrix. Note that the coefficient matrix of $\mathbf{I}(\xi,a)$ is real valued at the order of $a^2$. Hence for $\xi$, $a\in\mathbb{R}$ and $|\xi|$, $|a|$ sufficiently small, the roots of $\det(\mathbf{L}-\lambda\mathbf{I})(\xi,a)$ are purely imaginary up to terms of orders $\xi a$ and $a^2$, implying the spectral stability in the vicinity of $i\omega_0$ in $\mathbb{C}$. The result seems to agree with that in \cite{AN2014}, for instance, from a numerical computation for the water wave problem.

\section{Ill-posedness for \eqref{E:BW1}}\label{sec:BW1}

For any $b\in\mathbb{R}$, note that $\eta=b$ makes a constant traveling wave of a Boussinesq-Whitham equation, after normalization of parameters,
\begin{equation}\label{E:BW1'}
\partial^2_t \eta=\c^2(|\partial_x|)\partial_x^2\eta+\partial_x^2(\eta^2),
\end{equation}
where $\c$ is in \eqref{def:c}. Linearizing \eqref{E:BW1'} about $\eta=b$ in the coordinate frame moving at the speed $c$, and seeking a solution of the form $e^{\lambda t}\zeta(x)$, $\lambda\in\mathbb{C}$, we arrive at 
\[
\lambda^2\zeta-2c\lambda\partial_x\zeta+c^2\partial_x^2\zeta=(\c^2(|\partial_x|)+2b)\partial_x^2\zeta.
\] 
A straightforward calculation reveals that it possesses infinitely many eigenvalues and eigenfunctions:
\[
\lambda(n+\xi,\pm)=in(c\pm\sqrt{\c^2(n+\xi)+2b})\quad \text{and}\quad \zeta(n+\xi)(z)=e^{i(n+\xi)z}
\]
for $n\in\mathbb{Z}$ and $\xi\in[0,1/2]$. If $b>0$ then $\lambda(n+\xi,\pm)$ are purely imaginary for any $n\in\mathbb{Z}$ and $\xi\in[0,1/2]$, implying spectral stability. If $b<0$, on the other hand, then since $\c^2$ decreases to zero monotonically away from the origin, it is possible to find $n+\xi\in\mathbb{R}$ sufficiently large such that $\lambda(n+\xi,-)$ is real and positive, implying spectral instability. In other words, a negative constant solution of \eqref{E:BW1'} is spectrally unstable however small it is. This is physically unrealistic. Nevertheless, in \cite{DT}, the spectral instability in \eqref{E:BW1'} away from the origin in $\mathbb{C}$ was argued by a numerical approximation of the spectrum of the linearization about a periodic wave train but for $b>0$.


To compare, for any $\k>0$, for any $b_1$, $b_2\in\mathbb{R}$ and $|b_1|$, $|b_2|$ sufficiently small, the linearization of \eqref{E:main}-\eqref{def:c} about $\eta=\eta_0(\k,b_1,b_2)$, $u=u_0(\k,b_1,b_2)$ and $c=c_0(\k,b_1,b_2)$ possesses infinitely many eigenvalues and eigenfunctions
\[
\lambda(n+\xi,\pm;b_1,b_2)=i(n+\xi)(c_0-u_0\pm \sqrt{1+h_0}\c(\k(n+\xi)))
\]
and
\[
\e(n+\xi,\pm;b_1,b_2)(z)=\begin{pmatrix}\sqrt{1+h_0}\\ \mp \c(n+\xi)\end{pmatrix}e^{i(n+\xi)z}
\]
for $n\in\mathbb{Z}$ and $\xi\in[0,1/2]$, where $\eta_0$, $u_0$, and $c_0$ are in \eqref{E:hu0}. For $b_1=b_2=0$, note that $\lambda(n+\xi,\pm;0,0)$ and $\e(n+\xi,\pm;0,0)$ agree with \eqref{def:eigen}. Since $1+h_0>0$ for any $b_1$, $b_2\in\mathbb{R}$ and $|b_1|$, $|b_2|$ sufficiently small by \eqref{E:h0}, it follows that $\lambda(n+\xi,\pm;b_1,b_2)$ lies on the imaginary axis for any $n\in \mathbb{Z}$ and $\xi \in [0,1/2]$ for any $b_1$, $b_2\in\mathbb{R}$ and $|b_1|$, $|b_2|$ sufficiently small. In other words, a sufficiently small, constant solution of \eqref{E:main}-\eqref{def:c} is spectrally stable.

\end{appendix}

\bibliographystyle{amsalpha}
\bibliography{stability.bib}

\end{document}